\title{Convergence and quasi-optimal cost of adaptive algorithms for nonlinear operators including iterative linearization and algebraic solver}
\author{Alexander Haberl}
\author{Dirk Praetorius}
\author{Stefan Schimanko}
\author{Martin Vohral\'{i}k}
\address{TU Wien, Institute for Analysis and Scientific Computing, Wiedner Hauptstr. 8-10/E101/4, 1040 Vienna, Austria}
\email{alexander.haberl@asc.tuwien.ac.at}
\email{dirk.praetorius@asc.tuwien.ac.at \quad \rm(corresponding author)}
\email{stefan.schimanko@asc.tuwien.ac.at}
\address{Inria, 2 rue Simone Iff, 75589 Paris, France \& Universit\'e Paris-Est, CERMICS (ENPC), 77455 Marne-la-Vall\'ee, France}
\email{martin.vohralik@inria.fr}
\keywords{elliptic boundary value problem, monotone nonlinearity, strong monotonicity, finite element method, Banach--Picard linearization, algebraic resolution, inexact solver, stopping criterion, {\sl a~posteriori} error estimate, adaptive mesh-refinement, contraction, convergence, error decay, quasi-optimality, computational cost}
\subjclass[2010]{65N12, 65N15, 65N30, 65N50, 68Q25.}
\thanks{{\bf Acknowledgement.} The authors thankfully acknowledge support by the Austrian Science Fund (FWF) through the research projects \emph{Computational nonlinear PDEs} (grant P33216), \emph{Optimal isogeometric boundary element method} (grant P29096), as well as \emph{Taming complexity in partial differential systems} (grant SFB F65). This project has also received funding from the European Research Council (ERC) under the European Union’s Horizon 2020 research and innovation program (grant agreement No 647134 GATIPOR)}
\def\R{\mathbb{R}}
\def\UU{\mathcal{U}}
\def\lpic{\lambda_{\rm Pic}}
\def\qpic{q_{\rm Pic}}
\def\lpcg{\lambda_{\rm alg}}
\def\qpcg{q_{\rm alg}}
\def\qsolve{q_{\rm alg}}
\def\Cpic{C_{\rm Pic}}
\def\Cpcg{C_{\rm alg}}
\def\Csum{C_{\rm sum}}
\def\Revision#1{{\color{blue}#1}}
\def\Revision#1{}
\def\set#1#2{\big\{#1 \,:\, #2 \big\}}
\def\Alpha{{\rm A}}
\newcommand*\patchAmsMathEnvironmentForLineno[1]{%
  \expandafter\let\csname old#1\expandafter\endcsname\csname #1\endcsname
  \expandafter\let\csname oldend#1\expandafter\endcsname\csname end#1\endcsname
  \renewenvironment{#1}%
     {\linenomath\csname old#1\endcsname}%
     {\csname oldend#1\endcsname\endlinenomath}}%
\newcommand*\patchBothAmsMathEnvironmentsForLineno[1]{%
  \patchAmsMathEnvironmentForLineno{#1}%
  \patchAmsMathEnvironmentForLineno{#1*}}%
\def\@seccntformat#1{\hspace*{4mm}%
  \protect\textup{\protect\@secnumfont
    \ifnum\pdfstrcmp{subsection}{#1}=0 \bfseries\fi
    \csname the#1\endcsname
    \protect\@secnumpunct
  }%
}
\def\coarse{H}
\def\fine{h}
\def\N{\mathbb{N}}
\def\T{\mathbb{T}}
\def\MM{\mathcal{M}}
\def\TT{\mathcal{T}}
\def\XX{\mathcal{X}}
\def\QQ{\mathcal{Q}}
\def\UU{\mathcal{V}}
\def\AA{\mathcal{A}}
\def\HH{\mathcal{X}}
\def\EE{\mathcal{E}}
\def\K{\mathbb{K}}
\def\C{\mathbb{C}}
\def\RR{\mathcal{R}}
\def\j{{\underline{j}}}
\def\k{{\underline{k}}}
\def\R{\mathbb{R}}
\def\d#1{\,{\rm d}#1}
\def\div{{\rm div}\,}
\def\lpic{\lambda_{\rm Pic}}
\def\lpcg{\lambda_{\rm alg}}
\def\Crel{C_{\rm rel}}
\def\Cstab{C_{\rm stab}}
\def\qred{q_{\rm red}}
\def\Clin{C_{\rm lin}}
\def\qlin{q_{\rm lin}}
\def\qghps{q_{\rm GHPS}}
\def\Ccea{C_{\text{\rm C\'ea}}}
\def\Cghps{C_{\rm GHPS}}
\def\Cson{C_{\rm son}}
\def\Cmesh{C_{\rm mesh}}
\def\Cmark{C_{\rm mark}}
\def\Copt{C_{\rm opt}}
\def\copt{c_{\rm opt}}
\def\A{\mathbb{A}}
\def\OO{\mathcal{O}}
\def\refine{{\tt refine}}
\def\norm#1#2{\|#1\|_{#2}}
\def\enorm#1{|\!|\!| #1 |\!|\!|}
\def\reff#1#2{\stackrel{\eqref{#1}}{#2}}
\newlength{\leftstackrelawd}
\newlength{\leftstackrelbwd}
\def\reffleft#1#2{\settowidth{\leftstackrelawd}%
{${{}^{\eqref{#1}}}$}\settowidth{\leftstackrelbwd}{$#2$}%
\addtolength{\leftstackrelawd}{-\leftstackrelbwd}%
\leavevmode\ifthenelse{\lengthtest{\leftstackrelawd>0pt}}%
{\kern-.48\leftstackrelawd}{}\mathrel{\mathop{#2}\limits^{\eqref{#1}}}}
\def\dual#1#2{\langle#1\,,\,#2\rangle}
\def\product#1#2{\bm{(}#1\,,\,#2\bm{)}}
\def\Ltproduct#1#2{(#1\,,\,#2)}
\newcounter{statement}
\newenvironment{statement}[2][!]{%
\vskip3mm
\hrule
\hrule
\hrule
\vskip1mm
\noindent%
\refstepcounter{statement}%
\bf#2~\thestatement%
\ifthenelse{\equal{#1}{!}}{.\ }{~(#1).\ }%
\it%
}{%
\vskip1mm
\hrule
\hrule
\hrule
\vskip2mm
}
\newenvironment{theorem}[1][!]{\begin{statement}[#1]{Theorem}}{\end{statement}}
\newenvironment{lemma}[1][!]{\begin{statement}[#1]{Lemma}}{\end{statement}}
\newenvironment{proposition}[1][!]{\begin{statement}[#1]{Proposition}}{\end{statement}}
\newenvironment{remark}[1][!]{\begin{statement}[#1]{Remark}}{\end{statement}}
\newenvironment{algorithm}[1][!]{\begin{statement}[#1]{Algorithm}}{\end{statement}}
\newcommand\ie{i.e.}
\newcommand\eg{e.g.}
\begin{document}

\maketitle
\thispagestyle{fancy}

\begin{abstract}
We consider a second-order elliptic boundary value problem with strongly monotone and Lipschitz-continuous nonlinearity. We design and study its adaptive numerical approximation interconnecting a finite element discretization, the Banach--Picard linearization, and a contractive linear algebraic solver. We in particular identify stopping criteria for the algebraic solver that on the one hand do not request an overly tight tolerance but on the other hand are sufficient for the inexact (perturbed) Banach--Picard linearization to remain contractive. Similarly, we identify suitable stopping criteria for the Banach--Picard iteration that leave an amount of linearization error that is not harmful for the residual {\sl a~posteriori} error estimate to steer reliably the adaptive mesh-refinement. For the resulting algorithm, we prove a contraction of the (doubly) inexact iterates after some amount of steps of mesh-refinement/linerization/algebraic solver, leading to its linear convergence. Moreover, for usual mesh-refinement rules, we also prove that the overall error decays at the optimal rate with respect to the number of elements (degrees of freedom) added with respect to the initial mesh. Finally, we prove that our fully adaptive algorithm drives the overall error down with the same optimal rate also with respect to the overall algorithmic cost expressed as the cumulated sum of the number of mesh elements over all mesh-refinement, linearization, and algebraic solver steps. Numerical experiments support these theoretical findings and illustrate the optimal overall algorithmic cost of the fully adaptive algorithm on several test cases.
\end{abstract}


\def\SS{\mathcal{S}}
\section{Introduction}
\label{section:introduction}

Let $\Omega \subset \R^d$ with $d \ge 1$ be a bounded Lipschitz domain with polytopal boundary.
Given $f \in L^2(\Omega)$, we aim to numerically approximate the weak solution $u^\star \in H^1_0(\Omega)$ of the nonlinear boundary value problem
\begin{align}\label{eq:strongform}
 \begin{split}
  - \div A(\nabla u^\star) &= f \quad \text{in } \Omega,\\
 u^\star &= 0 \quad \text{ on } \partial\Omega.
 \end{split}
\end{align}
To this end, we propose an adaptive algorithm of the type
\begin{align}\begin{split}
 & \hspace*{2.7cm} \boxed{\text{ estimate total error and its components}}\\
 & \hspace*{6.3cm} \downarrow\\
 & \boxed{\text{advance algebra/advance linearization/mark and refine mesh elements}}
\end{split}\end{align}
which monitors and adequately stops the iterative linearization and the linear algebraic solver as well as steers the local mesh-refinement. The goal of this contribution is to perform a first rigorous mathematical analysis of this algorithm in terms of convergence and quasi-optimal computational costs.

\subsection{Finite element approximation and Banach--Picard iteration}

Suppose that the nonlinearity $A$ in~\eqref{eq:strongform} is Lipschitz-continuous (with constant $L > 0$) and strongly monotone (with constant $\alpha > 0$); see Section~\ref{section:main_results} for details. Then, the main theorem on monotone operators yields the existence and uniqueness of the weak solution $u^\star \in H^1_0(\Omega)$; see, e.g., \cite[Theorem~25.B]{zeidler}.
Given a triangulation $\TT_\coarse$ of $\Omega$, the lowest-order finite element approximation to problem~\eqref{eq:strongform} reads as follows: Find $u_\coarse^\star \in \XX_\coarse := \set{v_\coarse \in C(\overline \Omega)}{v_\coarse|_T \text{ is affine for all } T \in \TT_\coarse\textrm{ and } v_\coarse|_{\partial\Omega} = 0}$ such that
\begin{align}\label{eq:intro:discrete}
 \Ltproduct{A(\nabla u_\coarse^\star)}{\nabla v_\coarse}_\Omega
 = \Ltproduct{f}{v_\coarse}_{\Omega}
 \quad \text{for all } v_\coarse \in \XX_\coarse.
\end{align}
The discrete solution $u_\coarse^\star \in \XX_\coarse$ again exists and is unique, but~\eqref{eq:intro:discrete} corresponds to a {\em nonlinear discrete system} which can typically only be solved {\em inexactly}.

The most straightforward algorithm for {\em iterative linearization} of~\eqref{eq:intro:discrete} stems from the proof of the main theorem on monotone operators which is constructive and relies on the Banach fixed point theorem: Define the (nonlinear) operator $\Phi_\coarse : \XX_\coarse \to \XX_\coarse$ by
\begin{align}\label{eq2:intro:phi}
 \Ltproduct{\nabla \Phi_\coarse(w_\coarse)}{\nabla v_\coarse}_\Omega
 = \Ltproduct{\nabla w_\coarse}{\nabla v_\coarse}_\Omega
 - \frac{\alpha}{L^2} \, \big[ \, \Ltproduct{A(\nabla w_\coarse)}{\nabla v_\coarse}_\Omega
 - \Ltproduct{f}{v_\coarse}_\Omega \, \big]
\end{align}
for all $w_\coarse,v_{\coarse} \in \XX_\coarse$. Note that~\eqref{eq2:intro:phi} corresponds to a discrete Poisson problem and hence $\Phi_\coarse(w_\coarse)\in\XX_H$ is well-defined. Then, it holds that
\begin{align}
 \hspace*{-1mm}\norm{\nabla(u_\coarse^\star - \Phi_\coarse(w_\coarse))}{L^2(\Omega)}
 \le \qpic \, \norm{\nabla(u_\coarse^\star - w_\coarse)}{L^2(\Omega)}
 \,\, \text{with} \,\,
 \qpic := (1-\alpha^2/L^2)^{1/2} < 1;
\end{align}
see, \eg,~\cite[Section~25.4]{zeidler}. Based on the {\em contraction} $\Phi_\coarse$, the Banach--Picard iteration starts from an arbitrary discrete initial guess and applies $\Phi_\coarse$ inductively to generate a sequence of discrete functions which hence converge towards $u_\coarse^\star$. Note that the computation of $\Phi_\coarse(w_h)$ by means of the discrete variational formulation~\eqref{eq2:intro:phi} corresponds to the solution of a (generically large) {\em linear discrete system} with symmetric and positive definite matrix that does not change during the iterations. In this work, we suppose that also~\eqref{eq2:intro:phi} is solved {\em inexactly} by means of a {\em contractive iterative algebraic solver} (with contraction factor $\qpcg < 1$), e.g., PCG with optimal preconditioner; see, e.g.,~\cite{Olsh_Tyrt_it_methods_14}.

\subsection{Fully adaptive algorithm} \label{sec_alg_intr}

In our approach, we compute a {\em sequence} of discrete approximations $u_\ell^{k,j}$ of $u^\star$ that have an index $\ell$ for the {\em mesh-refinement}, an index $k$ for the Banach--Picard {\em linearization} iteration, and an index $j$ for the {\em algebraic solver} iteration.

First, we design a stopping criterion for the algebraic solver such that, at linearization step $k-1 \in \N_0$ on the mesh $\TT_\ell$, we stop for some index $\j \in \N$. At the next linearization step $k \in \N$, the arising linear system reads as follows:
\begin{align}\label{eq3:intro:phi}
 \begin{split}
 &\text{Find } u_\ell^{k,\star} \in \XX_\ell \text{ such that, for all $v_\ell \in \XX_\ell$,}
 \\& \quad
 \Ltproduct{\nabla u_\ell^{k,\star}}{\nabla v_\ell}_\Omega
 = \Ltproduct{\nabla u_\ell^{k-1,\j}}{\nabla v_\ell}_\Omega
 - \frac{\alpha}{L^2} \, \big[ \, \Ltproduct{A(\nabla u_\ell^{\!k-1,\j})}{\nabla v_\ell}_\Omega
 - \Ltproduct{f}{v_\ell}_\Omega \, \big],
 \end{split}
\end{align}
with uniquely defined but not computed exact solution $u_\ell^{k,\star} = \Phi_\ell(u_\ell^{k-1,\j})$ and computed iterates $u_\ell^{k,j}$ that approximate $u_\ell^{k,\star}$. Note that~\eqref{eq3:intro:phi} is a {\em perturbed} Banach--Picard iteration since it starts from the available $u_\ell^{k-1,\j}$, typically not equal to the unavailable $u_\ell^{k-1,\star}$.

Second, we design a stopping criterion for the perturbed Banach--Picard iteration at some index $\k$, producing a discrete approximation $u_{\ell}^{\k,\j}$. 

Finally, we locally refine the triangulation $\TT_{\ell}$ on the basis of the D\"orfer marking criterion for the local contributions of the residual error estimator $\eta_{\ell}(u_{\ell}^{\k,\j})$, and, to lower the computational effort, employ nested iteration in that the continuation on the new triangulation $\TT_{\ell +1}$ is started with the initial guess $u_{\ell+1}^{0,0} := u_{\ell}^{\k,\j}$.

\subsection{Previous contributions}

Solving the linear and nonlinear discrete systems ``exactly'' is often not possible in practical situations due to the size of the considered systems, and, actually, performing inexact solves on purpose is a traditional and popular approach to speed-up the simulations. Focusing on the inexact solve of the linear systems gives in particular rise to the ``inexact Newton method''; see, \eg,~\cite{Deuf_glob_inex_Newt_91,Eis_Walk_inex_Newt_94}, and the references therein. Under appropriate conditions, these can asymptotically preserve the convergence speed of the ``exact'' method. Note that these approaches only focus on the finite-dimensional system of nonlinear algebraic equations of the form~\eqref{eq:intro:discrete} but do not see/take into account the continuous problem~\eqref{eq:strongform}.

Taking into account the error from numerical discretization and distinguishing the {\em linearization} and {\em discretization} errors sets a new level of difficulty as, at this moment, one leaves the finite-dimensional world of~\eqref{eq:intro:discrete} and the overall error is evaluated with respect to~\eqref{eq:strongform}. For strongly monotone nonlinear model problems, this has been done in, \eg, ~\cite{Chai_Sur_comp_a_post_NL_06, Chai_Sur_a_post_lin_err_mon_NL_07}; see also the references therein. Later, reliable (actually guaranteed) and efficient (actually robust with respect to the size of the nonlinearity) {\sl a~posteriori} error estimates in such a framework were obtained in~\cite{El_Al_Ern_Voh_a_post_NL_11}.
Therein, adaptive algorithms balancing the estimates of the linearization and discretization error components are proposed and their optimal performance is observed numerically, but no theoretical proofs of convergence and optimality of the arising approximate solutions are given.
Similar ideas and achievements are presented in~\cite{Ber_Dak_Mans_Say_a_post_it_NL_15,Beck_Cap_Luce_st_crit_flux_rec_15,cw2017}, and in~\cite{Hous_Wihl_Newt_hp_18}, where an adaptive choice of the damping parameter in the Newton method is studied in the context of semilinear singularly-perturbed reaction--diffusion problems.

Recently, theoretical analyses of algorithms balancing linearization and discretization components have been undertaken. The works~\cite{gmz2011,Heid_Wih_lin_un_cvg_19} prove convergence of the combined iterative linearization and finite element (Galerkin) discretization, where~\cite{Heid_Wih_lin_un_cvg_19} builds on the unified framework of~\cite{Heid_Wih_lin_un_18} encompassing also Ka\v canov and (damped) Newton linearizations. Moreover,~\cite{banach,banach2} prove linear {\em convergence}, optimal {\em decay rate} in terms of the {\em number of degrees of freedom}, and (almost) optimal decay rate in terms of the {\em overall computational cost} for a fixed-point (Banach--Picard) iterative scheme.
These last references extend concepts from~\cite{Vees_cvg_p_Lap_02,gmz2012,Bel_Die_Kreu_opt_p_Lap_12} in order to take into account inexact linearization solvers, whereas the linear algebraic solver is supposed exact.


Taking into account all {\em algebraic}, {\em linearization}, and {\em discretization} error components is in the heart of the ``adaptive inexact Newton method''; see~\cite{ev2013} and the references therein. Here dedicated stopping criteria are used both for the outer linearization loop and the inner algebraic solver loop, in conjunction with adaptive mesh-refinement. Extensions to more complicated problems are presented in~\cite{Canc_Pop_Voh_a_post_2P_14,Di_Pi_Voh_Yous_a_post_Stef_15,Di_Pi_Voh_Yous_a_post_comp_14}; see also~\cite{Pol_inex_adpt_NL_16} for regularizations on coarse meshes ensuring well-posedness of the discrete systems in Newton-like linearizations. Again reliability (and efficiency) of the estimates are theoretically established and optimal performances of the fully adaptive algorithms are numerically observed, but no theoretical proofs of the latter are presented. Instead, this is our goal in the present work.
We stress that such results have already been derived for adaptive wavelet discretizations~\cite{cdd2003,stevenson2014} which provide inherent control of the residual error in terms of the wavelet coefficients, while the present analysis for standard finite element discretizations has to rely on the local information of appropriate {\sl a~posteriori} error estimators.

\subsection{Main results: linear convergence, optimal decay rate, and optimal cost}

The present contribution appears to be the first work that provides a thorough convergence analysis of fully adaptive strategies for nonlinear equations. To describe more precisely our results, note that the sequential nature of the fully adaptive algorithm of Section~\ref{sec_alg_intr} gives rise to an index set
\begin{align*}
 \QQ := \set{(\ell,k,j) \in \N_0^3}{\text{discrete approximation $u_\ell^{k,j}$ is computed by the algorithm}}
\end{align*}
together with an ordering
\begin{align*}
 |(\ell,k,j)| < |(\ell',k',j')|
 \quad \stackrel{\rm def}{\Longleftrightarrow}
\quad u_\ell^{k,j} \text{ is computed earlier than } u_{\ell'}^{k',j'}.
\end{align*}
Then, our first main result, formulated in Theorem~\ref{theorem:linconv} below, proves that the proposed adaptive strategy is {\em contractive} after some amount of steps and {\em linearly convergent} in the sense of
\begin{align}\label{eq:intro_lin_conv}
 \Delta_{\ell'}^{k',j'}
 \le \Clin \qlin^{|(\ell',k',j')| - |(\ell,k,j)|} \, \Delta_\ell^{k,j}
 \quad \text{for all } 
 |(\ell,k,j)| \leq |(\ell',k',j')|,
\end{align}
where $\Clin\ge1$ and $0 < \qlin < 1$ are generic constants and $\Delta_\ell^{k,j}$ is an appropriate quasi-error quantity involving the error $\norm{\nabla(u^\star - u_\ell^{k,j})}{L^2(\Omega)}$ as well as the error estimator $\eta_\ell(u_\ell^{k,j})$. The estimate~\eqref{eq:intro_lin_conv} appears to be the key argument to prove the {\em optimal error decay rate} with respect to the number of {\em degrees of freedom} added with respect to the initial mesh in the sense that, in particular,
\[
    \sup_{(\ell,k,j) \in \QQ} ( \#\TT_{\ell} - \#\TT_0 + 1 )^s \Delta_{\ell}^{k,j}
		<\infty
\]
whenever $u^\star$ is approximable at rate $s$; see Theorem~\ref{theorem:rate} below for the details. Finally, our most eminent result is the {\em optimal error decay rate} with respect to the {\em overall cost} of the fully adaptive algorithm which steers the mesh-refinement, the perturbed Banach--Picard linearization, and the algebraic solver. In short, this reads
\[
\sup_{(\ell',k',j') \in \QQ} \bigg( \sum_{\substack{(\ell,k,j)\in\QQ \\ (\ell,k,j) \le (\ell',k',j')}} \# \TT_\ell \bigg)^s \Delta_{\ell'}^{k',j'}
				<\infty
\]
whenever $u^\star$ is approximable at rate $s$; see Theorem~\ref{theorem:cost} below for the details.

\subsection{Outline}

The remainder of the paper is organised as follows. In Section~\ref{section:main_results}, we introduce an abstract setting in which all our results will be formulated, define the exact weak and finite elements solutions (none of which is available in our setting), and introduce our requirements on mesh-refinement and error estimator. We also give here precise requirements on the algebraic solver, state our adaptive algorithm and stopping criteria in all details, and present our main results, including some discussions. The proofs of some auxiliary results and of Proposition~\ref{proposition:reliability} (reliability in Algorithm~\ref{algorithm}), Theorem~\ref{theorem:linconv} (linear convergence), Theorem~\ref{theorem:rate} (decay rate wrt.\ degrees of freedom), and Theorem~\ref{theorem:cost} (decay rate wrt.\ computational cost) are respectively given in Sections~\ref{section:aux}, \ref{section:lin_cvg}, \ref{section:optimal_rate_DoFs}, and~\ref{section:optimal_cost}. Finally numerical experiments in Section~\ref{section:num_exp} underline the theoretical findings.

Throughout our work, we apply the following convention: In statements of theorems, lemmas, etc., we explicitly state all constants  together with their dependencies. In proofs, however, we abbreviate $A \le c B$ with a generic constant $c > 0$ by writing $A \lesssim B$. Moreover, $A \simeq B$ abbreviates $A \lesssim B \lesssim A$.

\section{Adaptive algorithm and main results}
\label{section:main_results}

In this section, we introduce an abstract setting, in which all our results will be formulated, define the exact weak and finite elements solutions, introduce our requirements on mesh-refinement, error estimator, and algebraic solver, state our adaptive algorithm, and present our main results, including some discussions.

\subsection{Abstract setting}
\label{section:abstract}
Let $\HH$ be a Hilbert space over $\K \in \{ \R, \C \}$ with scalar product $\product\cdot\cdot$, corresponding norm $\enorm\cdot$, and dual space $\HH'$ (with canonical operator norm $\enorm{\cdot}'$).
Let $P: \HH \to \K$ be G\^ateaux-differentiable with derivative $\AA := {\rm d}P: \HH \to \HH'$, i.e.,
\begin{align*}
 \dual{\AA w}{v}_{\HH'\times\HH} = \lim_{\substack{t \to 0 \\ t \in \R}} \frac{P(w+tv)-P(w)}{t}
 \quad \text{for all } v, w \in \HH.
\end{align*}
We suppose that the operator $\AA$ is {\em strongly monotone} and {\em Lipschitz-continuous}, i.e.,
\begin{align}\label{def:assumptions_operator}
 \alpha \, \enorm{w - v}^2 \le {\rm Re} \, \dual{\AA w - \AA v}{w - v}_{\HH'\times\HH}
 \quad \text{and} \quad
 \enorm{\AA w - \AA v}' \le L \, \enorm{w - v}
\end{align}
for all $v, w \in \HH$, where $0 < \alpha \le L$ are generic real constants. 

Given a linear and continuous functional $F \in \HH'$, the main theorem on monotone operators~\cite[Section~25.4]{zeidler} yields existence and uniqueness of the solution $u^\star \in \HH$ of
\begin{align}\label{eq:exact_solution}
 \dual{\AA u^\star}{v}_{\HH'\times\HH} = F(v)
 \quad \text{for all } v \in \HH.
\end{align}
The result actually holds true for any closed subspace $\XX_\coarse \subseteq \HH$, which also gives rise to a unique $u_\coarse^\star \in \XX_\coarse$ such that
\begin{align}\label{eq:exact_solution_disc}
 \dual{\AA u_\coarse^\star}{v_\coarse}_{\HH'\times\HH} = F(v_\coarse)
 \quad \text{for all } v_\coarse \in \XX_\coarse.
\end{align}

Finally, with the {\em energy functional} $\EE := {\rm Re}\,(P - F)$, it holds that
\begin{align}\label{eq:energy}
 \frac{\alpha}{2} \, \enorm{v_\coarse - u_\coarse^\star}^2
 \le \EE(v_\coarse) - \EE(u_\coarse^\star)
 \le \frac{L}{2} \, \enorm{v_\coarse - u_\coarse^\star}^2
 \quad \text{for all } v_\coarse \in \XX_\coarse;
\end{align}
see, e.g.,~\cite[Lemma~5.1]{banach}.
In particular, $u^\star \in \HH$ (resp.\ $u_\coarse^\star \in \XX_\coarse^\star$) is the unique minimizer of the minimization problem
\begin{align}\label{eq:minimization}
 \EE(u^\star) = \min_{v \in \HH} \EE(v)
 \quad \big( \text{resp.} \quad
 \EE(u_\coarse^\star) = \min_{v_\coarse \in \XX_\coarse} \EE(v_\coarse) \big).
\end{align}
As for linear elliptic problems, it follows from \eqref{def:assumptions_operator}--\eqref{eq:exact_solution_disc} that the present setting guarantees the C\'ea lemma (see, e.g.,~\cite[Section~25.4]{zeidler})
\begin{align}\label{eq:cea}
 \enorm{u^\star - u_\coarse^\star}
 \le \Ccea \, \enorm{u^\star - v_\coarse}
 \quad \text{for all } v_\coarse \in \XX_\coarse
 \quad \text{with} \quad
 \Ccea := L/\alpha.
\end{align}

\subsection{Mesh-refinement}
\label{section:mesh-refinement}

Let $\TT_\coarse$ be a conforming simplicial mesh of $\Omega$, \ie, a partition of $\overline \Omega$ into closed simplices $T$ such that $\bigcup_{T \in \TT_\coarse} T = \overline \Omega$ and such that the intersection of two different simplices is either empty or their common vertex, edge, or face.
We assume that $\refine(\cdot)$ is a fixed mesh-refinement strategy, e.g., newest vertex bisection~\cite{stevenson2008}.
We write $\TT_\fine = \refine(\TT_\coarse,\MM_\coarse)$ for the coarsest one-level refinement of $\TT_\coarse$, where all marked elements $\MM_\coarse \subseteq \TT_\coarse$ have been refined, i.e., $\MM_\coarse \subseteq \TT_\coarse \backslash \TT_\fine$. We write $\TT_\fine \in \refine(\TT_\coarse)$, if $\TT_\fine$ can be obtained by finitely many steps of one-level refinement (with appropriate, yet arbitrary marked elements in each step). We define $\T := \refine(\TT_0)$ as the set of all meshes which can be generated from the initial simplicial mesh $\TT_0$ of $\Omega$ by use of $\refine(\cdot)$.
Finally, we associate to each $\TT_\coarse\in\T$ a corresponding finite-dimensional subspace $\XX_\coarse \subsetneqq \HH$, where we suppose that $\XX_H\subseteq\XX_h$ whenever $\TT_H,\TT_h\in\T$ with $\TT_h\in\refine(\TT_H)$.

For our analysis, we only employ that the shape-regularity of all meshes $\TT_\coarse\in\T$ is uniformly bounded by that of $\TT_0$ together with the following structural properties~\eqref{axiom:sons}--\eqref{axiom:closure}, where $\Cson \ge 2$ and $\Cmesh > 0$ are generic constants:
\renewcommand{\theenumi}{R\arabic{enumi}}
\begin{enumerate}
\bf
\item\label{axiom:sons}
\rm
\textbf{splitting property:} Each refined element is split into finitely many sons, i.e., for all $\TT_\coarse \in \mathbb{T}$ and
all $\mathcal{M}_\coarse \subseteq \TT_\coarse$, the mesh
$\TT_\fine = \refine(\TT_\coarse, \mathcal{M}_\coarse)$ satisfies that
\begin{align*}
	\# (\TT_\coarse \setminus \TT_\fine) + \# \TT_\coarse \leq \# \TT_\fine
	\leq \Cson \, \# (\TT_\coarse \setminus \TT_\fine) + \# (\TT_\coarse \cap \TT_\fine);
\end{align*}
\bf
\item\label{axiom:overlay}
\rm
\textbf{overlay estimate:} For all meshes $\TT \in \mathbb{T}$ and $\TT_\coarse,\TT_\fine \in \refine(\TT)$,
there exists a common refinement $\TT_\coarse \oplus \TT_\fine \in \refine(\TT_\coarse) \cap \refine(\TT_\fine) \subseteq \refine(\TT)$ such that
\begin{align*}
	\# (\TT_\coarse \oplus \TT_\fine) \leq \# \TT_\coarse + \# \TT_\fine - \# \TT;
\end{align*}
\bf
\item\label{axiom:closure}
\rm
\textbf{mesh-closure estimate:}
 For each sequence $(\TT_\ell)_{\ell \in \N_0}$ of successively refined meshes, i.e., $\TT_{\ell+1} := \refine(\TT_\ell,\MM_\ell)$ with $\MM_\ell \subseteq \TT_\ell$ for all $\ell \in \N_0$, it holds  that	
\begin{align*}
	\# \TT_\ell - \# \TT_0 \leq \Cmesh \sum_{j=0}^{\ell -1} \# \mathcal{M}_j.
\end{align*}%
\end{enumerate}

For newest vertex bisection, we refer to~\cite{bdd2004,stevenson2007,stevenson2008,ckns2008,kpp2013,gss2014} for the validity of~\eqref{axiom:sons}--\eqref{axiom:closure}. For red-refinement with first-order hanging nodes, details are found in~\cite{bn2010}.

\subsection{Error estimator}
\label{section:estimator}
For each mesh $\TT_\coarse \in \T$, suppose that we can compute refinement indicators
\begin{align}
 \eta_\coarse(T,v_\coarse) \ge 0
 \quad \text{for all } T \in \TT_\coarse
 \text{ and all } v_\coarse \in \XX_\coarse.
\end{align}
We denote
\begin{align}
 \eta_\coarse(\UU_\coarse, v_\coarse)
 := \bigg(\sum_{T \in \UU_\coarse} \eta_\coarse(T,v_\coarse)^2 \bigg)^{1/2}
 \quad \text{for all } \UU_\coarse \subseteq \TT_\coarse
\end{align}
and abbreviate $\eta_\coarse(v_\coarse) := \eta_\coarse(\TT_\coarse, v_\coarse)$.
As far as the estimator is concerned, we assume the following \emph{axioms of adaptivity} from~\cite{axioms} for all $\TT_\coarse \in \T$ and all $\TT_\fine \in \refine(\TT_\coarse)$, where $\Cstab, \Crel > 0$, and $0 < \qred < 1$ are generic constants:
\renewcommand{\theenumi}{A\arabic{enumi}}
\begin{enumerate}
\bf
\item stability:\label{axiom:stability}\rm\,
$| \eta_\fine(\UU_\coarse, v_\fine) - \eta_\coarse(\UU_\coarse,v_\coarse) |
\le \Cstab \enorm{v_\fine - v_\coarse}$
 for all $v_\fine\in\XX_\fine$, $v_\coarse \in \XX_\coarse$ and all $\UU_\coarse \subseteq \TT_\coarse \cap \TT_\fine$;
\bf
\item reduction:\label{axiom:reduction}\rm\,
$\XX_\coarse \subseteq \XX_\fine$ and $\eta_\fine(\TT_\fine \backslash \TT_\coarse, v_\coarse) \le \qred \, \eta_\coarse(\TT_\coarse \backslash \TT_\fine, v_\coarse)$ for all $v_\coarse \in \XX_\coarse$;
\bf
\item reliability:\label{axiom:reliability}\rm\,
$\enorm{u^\star - u_\coarse^\star} \le \Crel \, \eta_\coarse(u_\coarse^\star)$;
\bf
\item discrete reliability:\label{axiom:discrete_reliability}\rm\,
$\enorm{u_\fine^\star - u_\coarse^\star} \le \Crel \, \eta_\coarse(\TT_\coarse \backslash \TT_\fine, u_\coarse^\star)$.
\end{enumerate}
We stress that the exact discrete solutions $u_\coarse^\star$ (resp.\ $u_\fine^\star$) in~\eqref{axiom:reliability}--\eqref{axiom:discrete_reliability} will never be computed but are only auxiliary quantities for the analysis.

We refer to Section~\ref{sec:numerics_model_problem} below for precise assumptions on the nonlinearity $A(\cdot)$ of problem~\eqref{eq:strongform} such that the standard residual error estimator satisfies~\eqref{axiom:stability}--\eqref{axiom:discrete_reliability} for lowest-order Courant finite elements; see also Section~\ref{sec:numerics_weak_formulation}--\ref{example:estimator}.

%
%
%
%

\subsection{Algebraic solver}
\label{section:alg_solver}

For given linear and continuous functionals $G \in \HH'$, we consider linear systems of algebraic equations of the type
\begin{align}\label{eq:exact_solution_alg}
 \product{u_\coarse^\flat}{v_\coarse} = G(v_\coarse)
 \quad \text{for all } v_\coarse \in \XX_\coarse
\end{align}
with unique (but not computed) exact solution $u_\coarse^\flat \in \XX_\coarse$.
We suppose here that we have at hand a contractive iterative algebraic solver for problems of the form~\eqref{eq:exact_solution_alg}. More precisely, let $u_\coarse^{0} \in \XX_\coarse$ be an initial guess and let the solver produce a sequence $u_\coarse^{j} \in \XX_\coarse$, $j \ge 1$. Then, we suppose that there exists a generic constant $0 < \qpcg < 1$ such that
\begin{align}\label{eq:alg:contraction}
 \enorm{u_\coarse^\flat - u_\coarse^j}
 \le \qpcg \, \enorm{u_\coarse^\flat - u_\coarse^{j-1}}
 \quad \text{for all } j \ge 1.
\end{align}
Examples for such solvers are suitably preconditioned conjugate gradients or multigrid; see, e.g., Olshanskii and Tyrtyshnikov~\cite{Olsh_Tyrt_it_methods_14} and the references therein.

\subsection{Adaptive algorithm}
\label{section:algorithm}
The present work considers an adaptive algorithm for numerical approximation of problem~\eqref{eq:exact_solution} which steers mesh-refinement with index $\ell$, a (perturbed) contractive Banach--Picard iteration with index $k$, and a contractive algebraic solver with index $j$. On each step $(\ell,k,j)$, it yields an approximation $u_\ell^{k,j}\in\XX_\ell$ to the unique but unavailable $u_\ell^\star \in \XX_\ell$ on the mesh $\TT_\ell$ defined by
\begin{align}\label{eq:exact_solution_ell}
 \dual{\AA u_\ell^\star}{v_\ell}_{\HH'\times\HH} = F(v_\ell)
 \quad \text{for all } v_\ell \in \XX_\ell.
\end{align}
Reporting for the summary of notation to Table~\ref{tab_not}, the algorithm reads as follows:

\begin{table}
\begin{center}
\begin{tabular}{cccccc}
\toprule %
\multicolumn{1}{c}{}&\multicolumn{2}{c}{counter}&\multicolumn{3}{c}{discrete solution}\\
\multicolumn{1}{c}{}&\multicolumn{2}{c}{}&\multicolumn{2}{c}{available}&\multicolumn{1}{c}{unavailable}\\
\cmidrule(r){2-3}\cmidrule(l){4-5}\cmidrule(l){6-6} & running & stopping &
running & stopping & exact \\\hline
mesh & $\ell$  & $\underline\ell$ & $u_\ell^{\k,\j}$ & $u_{\underline\ell}^{\k,\j}$ & $u_\ell^\star$ from~\eqref{eq:exact_solution_ell}\\
linearization & $k$ & $\k$  & $u_\ell^{k,\j}$ & $u_{\ell}^{\k,\j}$ & $u_\ell^{k,\star}$ from~\eqref{eq:banach_non_pert}\\
algebraic solver & $j$ & $\j$ & $u_\ell^{k,j}$ & $u_\ell^{k,\j}$ & \\
\bottomrule
\end{tabular}
\vspace{0.1cm} \caption{Counters and discrete solutions in Algorithm~\ref{algorithm}.} \label{tab_not}
\end{center}
\end{table}

\begin{algorithm}\label{algorithm}
{\bfseries Input:} Initial mesh $\TT_0$ and initial guess $u_0^{0,0} = u_0^{0,\j} \in \XX_0$, parameters $0 < \theta \le 1$, $0<\lpcg<1$, $0 < \lpic$, and $\Cmark \ge 1$, counters $\ell=k=j= 0$.
\\
{\bfseries Adaptive loop:} Iterate the following steps~{\rm(i)--(vi)}:
\begin{enumerate}

	\item[\rm{(i)}] {\bf Repeat} the following steps {\rm(a)--(c)}:
	
	\begin{enumerate}
		\medskip
		\item[\rm{(a)}] Define $u_\ell^{k+1,0} := u_\ell^{k,j}$ and update counters $k:=k+1$ as well as $j:=0$.
	
		\medskip
		\item[\rm{(b)}] {\bf Repeat} the following steps {\rm(I)--(III)}:
	
		\begin{enumerate}
			
			\medskip
			\item[\rm{(I)}]    Update counter $j := j+1$.
		
			\smallskip
			\item[\rm{(II)}] \label{ref_alg_step} Consider the problem of finding
                \begin{align} \label{eq:banach_non_pert}
                \begin{split}
                 &u_\ell^{k,\star} \in \XX_\ell \text{ such that, for all } v_\ell \in \XX_\ell,
                 \\& \quad
                 \product{u_\ell^{k,\star}}{v_\ell} = \product{u_\ell^{k-1,\j}}{v_\ell} \!-\! \frac{\alpha}{L^2}  \dual{\AA u_\ell^{\!k-1,\j} - F}{v_\ell}_{\HH' \times \HH}
                \end{split}
                \end{align}
                and do one step of the algebraic solver applied to~\eqref{eq:banach_non_pert} starting from $u_\ell^{k,j-1}$, which yields $u_\ell^{k,j}$ (an approximation to $u_\ell^{k,\star}$).

			\smallskip
			\item[\rm{(III)}]  	Compute the local indicators $\eta_\ell(T,u_\ell^{k,j})$ for all $T\in\TT_\ell$.

		\end{enumerate}

		\smallskip
        \noindent {\bf Until}\quad
        $\enorm{u_\ell^{k,j}-u_\ell^{k,j-1}} \le \lpcg\big[\,\eta_\ell(u_\ell^{k,j})+\enorm{u_\ell^{k,j}-u_\ell^{k-1,\j}}\,\big]$.
        \refstepcounter{equation}\hfill{\rm(\theequation)}\label{eq:st_crit_pcg}
		
		\medskip
		\item[\rm{(c)}]    Define $\j := \j(\ell,k):= j$.
		
	\end{enumerate}

	\smallskip
    \noindent {\bf Until}\quad
    $\enorm{u_\ell^{k,\j}-u_\ell^{k-1,\j}} \le \lpic\eta_\ell(u_\ell^{k,\j})$.
    \refstepcounter{equation}\hfill{\rm(\theequation)}\label{eq:st_crit_pic}

	\medskip
	\item[\rm{(ii)}] Define $\k := \k(\ell):= k$.

    \medskip
    \item[\rm{(iii)}] If $\eta_{\ell}(u_\ell^{\k,\j}) = 0$, set $\underline \ell := \ell$ and exit.

	\medskip
	\item[\rm{(iv)}]  Determine a set $\MM_\ell \subseteq \TT_\ell$ with up to the multiplicative constant $\Cmark$  minimal cardinality such that
		\begin{equation}\label{eq:doerfler}
			\theta\, \eta_{\ell}(u_{\ell}^{\k,\j}) \le  \eta_{\ell}(\MM_\ell, u_{\ell}^{\k,\j}).
		\end{equation}
	
	\medskip
	\item[\rm{(v)}]  Generate $\TT_{\ell+1} := \refine(\TT_\ell,\MM_\ell)$ and define $u_{\ell+1}^{0,0} := u_{\ell+1}^{0,\j} := u_\ell^{\k,\j}$.
	
	\medskip
	\item[\rm{(vi)}] Update counters $\ell := \ell + 1$, $k := 0$, and $j := 0$ and continue with \rm{(i)}.
	
\end{enumerate}
{\bfseries Output:} Sequence of discrete solutions $u_\ell^{k,j}$ and
corresponding error estimators $\eta_\ell(u_\ell^{k,j})$.
\end{algorithm}

Some remarks are in order to explain the nature of Algorithm~\ref{algorithm}. The innermost loop (Algorithm~\ref{algorithm}(ib)) steers the algebraic solver. Note here that the exact solution $u_\ell^{k,\star}$ of~\eqref{eq:banach_non_pert} is not computed but only approximated by the computed iterates $u_\ell^{k,j}$. For the linear system~\eqref{eq:banach_non_pert}, the contraction assumption~\eqref{eq:alg:contraction} reads as
\begin{align}\label{eq:pcg:contraction}
 \enorm{u_\ell^{k,\star} - u_\ell^{k,j}}
 \le \qpcg \, \enorm{u_\ell^{k,\star} - u_\ell^{k,j-1}}
 \quad \text{for all } j \ge 1.
\end{align}
Then, the triangle inequality implies that
\begin{align}\label{eq2:pcg:contraction}
 \frac{1 - \qpcg}{\qpcg} \, \enorm{u_\ell^{k,\star} - u_\ell^{k,j}}
 \le \enorm{u_\ell^{k,j} - u_\ell^{k,j-1}}
 \le (1 + \qpcg) \, \enorm{u_\ell^{k,\star} - u_\ell^{k,j-1}}.
\end{align}
Hence, the term $\enorm{u_\ell^{k,j} - u_\ell^{k,j-1}}$ provides a means to estimate the algebraic error $\enorm{u_\ell^{k,\star} - u_\ell^{k,j}}$.
Thus, the approximation $u_\ell^{k,j}$ is accepted and the algebraic solver is stopped if the algebraic error estimate $\enorm{u_\ell^{k,j} - u_\ell^{k,j-1}}$ is, up to the threshold $\lpcg$, below the estimate on the sum~$\eta_\ell(u_\ell^{k,j})+\enorm{u_\ell^{k,j}-u_\ell^{k-1,\j}}$ of the discretization and linearization errors; see~\eqref{eq:st_crit_pcg}. Since $\enorm{u_\ell^{k,1}-u_\ell^{k,0}} = \enorm{u_\ell^{k,1}-u_\ell^{k-1,\j}}$, the stopping criterion~\eqref{eq:st_crit_pcg} terminates the solver for $\lpcg \ge 1$ for $j=1$, i.e., the algebraic solver would always be stopped after one step. This motivates the restriction $\lpcg<1$.

The middle loop (Algorithm~\ref{algorithm}(i)) steers the linearization by means of the (perturbed) Banach--Picard iteration. Lemma~\ref{lemma:picard:contraction} below shows that the term $\enorm{u_\ell^{k,\j} - u_\ell^{k-1,\j}}$ estimates the linearization error $\enorm{u_\ell^\star - u_\ell^{k,\j}}$. Note here that, {\sl a~priori}, only the non-perturbed Banach--Picard iteration corresponding to the (unavailable) exact solve of~\eqref{eq:banach_non_pert} yielding $u_\ell^{k,\star}$ would lead to the contraction
\begin{align}\label{eq:picard:contraction0}
 \enorm{u_\ell^\star - u_\ell^{k,\star}} \le \qpic \, \enorm{u_\ell^\star - u_\ell^{k-1,\j}}
 \quad \text{for all }
 (\ell, k, 0) \in \QQ \text{ with } k \ge 1,
\end{align}
where $0 < \qpic := (1-\alpha^2/L^2)^{1/2} < 1$.
The approximation $u_\ell^{k,\j}$ is accepted and the linearization is stopped if the linearization error estimate $\enorm{u_\ell^{k,\j} - u_\ell^{k-1,\j}}$ is, up to the threshold $\lpic$, below the discretization error estimate $\eta_\ell(u_\ell^{k,\j})$; see~\eqref{eq:st_crit_pic}.

Finally, the outermost adaptive loop steers the local mesh-refinement. To this end, the D\"orfler marking criterion~\eqref{eq:doerfler} from~\cite{doerfler1996} is employed to mark elements $T \in \MM_\ell$ for refinement, unless $\eta_{\ell}(u_\ell^{\k,\j}) = 0$, in which case Proposition~\ref{proposition:reliability} below ensures that the approximation $u_\ell^{\k,\j}$ coincides with the exact solution $u^\star$ of~\eqref{eq:exact_solution}.

\begin{remark}
In a practical implementation, Algorithm~\ref{algorithm} has to be complemented by appropriate stopping criteria in all of the loops so that the computation is terminated if $u_\ell^{k,j}\in\XX_\ell$ is a sufficiently accurate approximation of $u^\star$. This can be done with the help of the reliable {\sl a~posteriori} error estimates summarized in Proposition~\ref{proposition:reliability} below.
\end{remark}

\subsection{Index set $\QQ$ for the triple loop}

To analyze Algorithm~\ref{algorithm}, define the index set
\begin{align} \label{eq_QQ}
 \QQ := \set{(\ell,k,j) \in \N_0^3}{\text{index triple $(\ell,k,j)$ is used in Algorithm~\ref{algorithm}}}.
\end{align}
Since Algorithm~\ref{algorithm} is sequential, the index set $\QQ$ is naturally ordered. For indices $(\ell,k,j), (\ell',k',j') \in \QQ$, we write
\begin{align}
 (\ell,k,j) < (\ell',k',j')
 \,\,\, \stackrel{\text{def}}{\Longleftrightarrow} \,\,\,
 (\ell,k,j) \text{ appears earlier in Algorithm~\ref{algorithm} than } (\ell',k',j').
\end{align}
With this order, we can define
\begin{align*}
|(\ell,k,j)|:=\#\set{(\ell',k',j')\in\QQ}{(\ell',k',j')<(\ell,k,j)},
\end{align*}
which is the \emph{total step number} of Algorithm~\ref{algorithm}.
We make the following definitions, which are consistent with that of Algorithm~\ref{algorithm}, and additionally define $\j(\ell,0):=0$:
\begin{align*}
 \underline\ell &:= \sup\set{\ell \in \N_0}{(\ell,0,0) \in \QQ} \in\N_0\cup\{\infty\},
 \\
 \k(\ell) &:= \sup\set{k \in \N_0}{(\ell,k,0) \in \QQ} \in\N_0\cup\{\infty\}
 \quad \text{if } (\ell,0,0) \in \QQ,
 \\
 \j(\ell,k) &:= \sup\set{j \in \N_0}{(\ell,k,j) \in \QQ} \in\N_0\cup\{\infty\}
 \quad \text{if } (\ell,k,0) \in \QQ.
\end{align*}
Generically, it holds that $\underline\ell = \infty$, i.e., infinitely many steps of mesh-refinement take place. However, our analysis also covers the cases that either the $k$-loop (linearization) or the $j$-loop (algebraic solver) do not terminate, i.e.,
\begin{align*}
 \k(\underline\ell) = \infty \,\, \text{ if } \,\, \underline\ell < \infty
 \quad \text{resp.} \quad
 \j(\underline\ell,\k)=\infty \,\, \text{ if } \,\, \underline\ell<\infty{\rm~and~} \k(\underline\ell) < \infty,
\end{align*}%
or that the exact solution $u^\star$ is hit at step (iii) of Algorithm~\ref{algorithm} (recall that $\eta_{\underline\ell}(u_{\underline\ell}^{\k,\j}) = 0$ implies $u^\star = u_{\underline\ell}^{\k,\j}$ by virtue of Proposition~\ref{proposition:reliability} below).
To abbreviate notation, we make the following convention: If the mesh index $\ell \in \N_0$ is clear from the context, we simply write $\k := \k(\ell)$, e.g., $u_\ell^{\k,j} := u_\ell^{\k(\ell),j}$. Similarly, we simply write $\j := \j(\ell,k)$, e.g., $u_\ell^{k,\j} := u_\ell^{k,\j(\ell,k)}$.

Note that there in particular holds $u_{\ell-1}^{\k,\j} = u_{\ell}^{0,0} = u_{\ell}^{1,0}$ for all $(\ell,0,0)\in\QQ$ with $\ell \geq 1$. Hence, these approximate solutions are indexed three times. This is our notational choice that will not be harmful for what follows; alternatively, one could only index the approximate solutions that appear on step~(i.b.II) of Algorithm~\ref{algorithm}.
\subsection{Main results}
\label{section:theorems}
Our first proposition provides computable upper bounds for the energy error $\enorm{u^\star-u_\ell^{k,j}}$ of the iterates $u_\ell^{k,j}$ of Algorithm~\ref{algorithm} at any step $(\ell,k,j) \in \QQ$. In particular, we note that the stopping criteria~\eqref{eq:st_crit_pcg}--\eqref{eq:st_crit_pic} ensure reliability of $\eta_\ell(u_\ell^{\k,\j})$ for the final perturbed Banach--Picard iterates $u_\ell^{\k,\j}$. The proof ist postponed to Section~\ref{sec:proof_error_control}. 

\begin{proposition}[Reliability at various stages of Algorithm~\ref{algorithm}]\label{proposition:reliability}
Suppose~\eqref{axiom:stability} and~\eqref{axiom:reliability}. Then, for all $(\ell,k,j)\in\QQ$, it holds that
\begin{align}\label{eq:reliability}
 \enorm{u^\star - u_\ell^{k,j}}
  \le \Crel'  \left\{
  \begin{aligned}
  &\eta_\ell(u_\ell^{k,j}) + \enorm{u_\ell^{k,j}-u_\ell^{k-1,\j}} + \enorm{u_\ell^{k,j}-u_\ell^{k,j-1}}&&\\
  &											&&\hspace{-3.27cm}\text{if \ }0 < k \le \k(\ell) \text{ and } 0 < j \le \j(\ell,k), \\
  &\eta_\ell(u_\ell^{k,\j}) + \enorm{u_\ell^{k,\j}-u_\ell^{k-1,\j}}
  												&&\hspace{-3.27cm} \text{if \ }0 < k \le \k(\ell)\text{ and }j=\j(\ell,k), \\
  &\eta_\ell(u_\ell^{\k,\j})
  												&&\hspace{-3.27cm} \text{if \ } k = \k(\ell) \text{ and }\j=\j(\ell,\k), \\
  &\eta_{\ell-1}(u_{\ell-1}^{\k,\j})
  												&&\hspace{-3.27cm} \text{if \ } k = 0 \text{ and }\ell > 0.
\end{aligned}\right.
\end{align}
The constant $\Crel' > 0$ depends only on $\Crel$, $\Cstab$, $\qpcg$, $\lpcg$, $\qpic$, and $\lpic$.
\end{proposition}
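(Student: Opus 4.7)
The plan is to decompose the total error via the triangle inequality into a \emph{discretization} contribution $\enorm{u^\star - u_\ell^\star}$, a \emph{linearization} contribution $\enorm{u_\ell^\star - u_\ell^{k,\star}}$, and an \emph{algebraic} contribution $\enorm{u_\ell^{k,\star} - u_\ell^{k,j}}$. Each of these will be controlled in terms of computable quantities, and the stopping criteria \eqref{eq:st_crit_pcg}--\eqref{eq:st_crit_pic} together with the nested initialization will then be invoked, in that order, to pass successively from the most detailed bound (case~1) to cases~2, 3, and~4.

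For the discretization contribution, I first apply reliability \eqref{axiom:reliability} to get $\enorm{u^\star - u_\ell^\star} \le \Crel\,\eta_\ell(u_\ell^\star)$, and then stability \eqref{axiom:stability}, taken with $\TT_\fine = \TT_\coarse = \TT_\ell$ and $\UU_\coarse = \TT_\ell$, to replace the unavailable $\eta_\ell(u_\ell^\star)$ by $\eta_\ell(u_\ell^{k,j}) + \Cstab\,\enorm{u_\ell^\star - u_\ell^{k,j}}$. For the algebraic contribution, the solver contraction \eqref{eq:pcg:contraction} gives, by the lower bound in \eqref{eq2:pcg:contraction}, the computable estimate $\enorm{u_\ell^{k,\star} - u_\ell^{k,j}} \le \tfrac{\qpcg}{1-\qpcg}\,\enorm{u_\ell^{k,j} - u_\ell^{k,j-1}}$. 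For the linearization contribution, the Banach--Picard contraction (Lemma~\ref{lemma:picard:contraction}, which is stated later in the paper and relies on \eqref{def:assumptions_operator}) yields $\enorm{u_\ell^\star - u_\ell^{k,\star}} \le \qpic\,\enorm{u_\ell^\star - u_\ell^{k-1,\j}}$; a standard triangle-inequality reabsorption then gives $\enorm{u_\ell^\star - u_\ell^{k,\star}} \le \tfrac{\qpic}{1-\qpic}\,\enorm{u_\ell^{k,\star} - u_\ell^{k-1,\j}}$, and one further triangle inequality bounds the right-hand side by $\enorm{u_\ell^{k,\star} - u_\ell^{k,j}} + \enorm{u_\ell^{k,j} - u_\ell^{k-1,\j}}$, whose first summand we have already controlled. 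Collecting the three bounds and absorbing the residual $\enorm{u_\ell^\star - u_\ell^{k,j}}$ term originating from stability onto the left-hand side proves case~1, with a constant depending on $\Crel, \Cstab, \qpcg, \qpic$.

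Cases~2 and~3 are then immediate consequences of case~1 together with the two stopping criteria of Algorithm~\ref{algorithm}. When $j=\j(\ell,k)$, the algebraic criterion \eqref{eq:st_crit_pcg} yields $\enorm{u_\ell^{k,\j}-u_\ell^{k,\j-1}} \le \lpcg\bigl[\eta_\ell(u_\ell^{k,\j})+\enorm{u_\ell^{k,\j}-u_\ell^{k-1,\j}}\bigr]$, so the algebraic term on the right-hand side of case~1 is absorbed into the remaining two, producing case~2 with an additional dependence on $\lpcg$. When in addition $k=\k(\ell)$, the linearization criterion \eqref{eq:st_crit_pic} gives $\enorm{u_\ell^{\k,\j}-u_\ell^{\k-1,\j}} \le \lpic\,\eta_\ell(u_\ell^{\k,\j})$, which absorbs the remaining linearization term into the estimator and yields case~3 with a constant now also depending on $\lpic$. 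Finally, case~4 reduces to case~3 on the coarser mesh: by step~(v) of Algorithm~\ref{algorithm}, the nested initialization gives $u_\ell^{0,0} = u_\ell^{0,\j} = u_{\ell-1}^{\k,\j}$, and the bound $\enorm{u^\star - u_\ell^{0,0}} \le \Crel'\,\eta_{\ell-1}(u_{\ell-1}^{\k,\j})$ follows at once from case~3 applied at index $\ell-1$.

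The main technical nuisance is the bookkeeping around the stability step: the term $\enorm{u_\ell^\star - u_\ell^{k,j}}$ created when passing from $\eta_\ell(u_\ell^\star)$ to $\eta_\ell(u_\ell^{k,j})$ also appears when bounding the linearization and algebraic contributions, so one has to chain the Picard and solver triangle-inequality arguments carefully and ensure that the coefficients in front of the left-hand side quantities remain strictly less than one so that an absorption is legitimate. This is what forces the conditions $\qpic<1$, $\qpcg<1$, and $\lpcg<1$ (the latter being part of the input of Algorithm~\ref{algorithm}) and dictates the stated dependence of $\Crel'$.
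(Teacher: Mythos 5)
Your decomposition (discretization, linearization, algebraic), the use of \eqref{axiom:reliability} plus \eqref{axiom:stability}, the computable bounds $\enorm{u_\ell^{k,\star}-u_\ell^{k,j}}\le\frac{\qpcg}{1-\qpcg}\enorm{u_\ell^{k,j}-u_\ell^{k,j-1}}$ and $\enorm{u_\ell^\star-u_\ell^{k,\star}}\le\frac{\qpic}{1-\qpic}\enorm{u_\ell^{k,\star}-u_\ell^{k-1,\j}}$, and the passage to cases~2--4 via the stopping criteria \eqref{eq:pcg:stopping}, \eqref{eq:picard:stopping} and the nested initialization are exactly the paper's route (see~\eqref{eq:rel_aux}). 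Two corrections, however. First, the concluding step as you describe it would fail: the residual term produced by stability is $\Crel\Cstab\,\enorm{u_\ell^\star-u_\ell^{k,j}}$, which is a \emph{different} quantity from the left-hand side $\enorm{u^\star-u_\ell^{k,j}}$ and carries a coefficient that is in general not smaller than one, so it cannot be ``absorbed onto the left-hand side''. No absorption is needed either: simply split $\enorm{u_\ell^\star-u_\ell^{k,j}}\le\enorm{u_\ell^\star-u_\ell^{k,\star}}+\enorm{u_\ell^{k,\star}-u_\ell^{k,j}}$ and reuse the two bounds you have already established; this closes case~1 with a constant depending only on $\Crel$, $\Cstab$, $\qpic$, $\qpcg$, which is precisely how the paper argues. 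Consequently, your remark that the proof ``forces'' $\lpcg<1$ through an absorption is not accurate: the proposition needs no smallness of $\lpcg$ or $\lpic$ at all (their values only enlarge $\Crel'$ in cases~2--3 by plain substitution of the stopping criteria), and $\lpcg<1$ is merely an input restriction of Algorithm~\ref{algorithm} that prevents trivial stopping.

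Second, a citation issue with a mathematical consequence: the one-step contraction $\enorm{u_\ell^\star-u_\ell^{k,\star}}\le\qpic\,\enorm{u_\ell^\star-u_\ell^{k-1,\j}}$ that you need is the \emph{unperturbed} Banach--Picard contraction \eqref{eq:picard:contraction0} (with its consequence \eqref{eq2:picard:contraction0}), which holds unconditionally because $u_\ell^{k,\star}$ is an exact Picard step from $u_\ell^{k-1,\j}$. It is not the content of Lemma~\ref{lemma:picard:contraction}, which concerns the perturbed iterates $u_\ell^{k,\j}$ and requires the smallness condition $0<\lpcg+\lpcg/\lpic<\lpcg^\star$; invoking that lemma would import an assumption that Proposition~\ref{proposition:reliability} deliberately avoids. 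With these two points corrected, your argument coincides with the paper's proof, including the treatment of case~4 via $u_\ell^{0,0}=u_\ell^{0,\j}=u_{\ell-1}^{\k,\j}$.
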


The first main theorem states linear convergence in {\em each step} of the adaptive algorithm, i.e., algebraic solver \emph{or} linearization \emph{or} mesh-refinement. The proof is given in Section~\ref{section:lin_cvg}.

\begin{theorem}[linear convergence]\label{theorem:linconv}
Suppose~\eqref{axiom:stability}--\eqref{axiom:reliability}. Then, there
exist $\lpcg^\star, \lpic^\star > 0$ such that for arbitrary $0<\theta\leq 1$ as well as for all $0<\lpcg<1$ and $0 < \lpic$ with $0 < \lpcg + \lpcg/\lpic < \lpcg^\star$ and $0 < \lpic/\theta < \lpic^\star$, there exist constants $\Clin\geq 1$ and $0<\qlin<1$ such that the quasi-error
\begin{align}\label{eq:def:Delta}
 \Delta_\ell^{k,j}
 := \enorm{u^\star - u_\ell^{k,j}} + \enorm{u_\ell^{k,\star} - u_\ell^{k,j}} + \eta_\ell(u_\ell^{k,j}),
\end{align}
composed of the overall error, the algebraic error, and the error estimator,
is linearly convergent in the sense of
\begin{align}\label{eq:linconv}
\Delta_{\ell'}^{k',j'} \leq \Clin\,\qlin^{|(\ell',k',j')|-|(\ell,k,j)|}\,\Delta_\ell^{k,j}
\end{align}
for all $(\ell,k,j),(\ell',k',j')\in\QQ$ with $(\ell',k',j')\geq(\ell,k,j)$. The constants $\Clin$ and $\qlin$  depend only on $\Crel$, $\Cstab$, $\qred$, $\theta$, $\qpcg$, $\lpcg$, $\qpic$, $\lpic$, $\alpha$, and $L$.
\end{theorem}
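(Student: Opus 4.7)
The plan is to establish one-step contraction of a suitable weighted version of $\Delta_\ell^{k,j}$ at each of the three levels of Algorithm~\ref{algorithm} (algebraic solver, linearization, mesh-refinement), and then to patch these estimates together using the stopping criteria~\eqref{eq:st_crit_pcg}--\eqref{eq:st_crit_pic}. These criteria are tailored precisely so that at $j=\j(\ell,k)$ the algebraic error is dominated by the linearization error plus the estimator, and at $k=\k(\ell)$ the linearization error is dominated by the estimator. Once one-step contraction holds at every level for a common weighted quantity equivalent to $\Delta_\ell^{k,j}$, iteration along the total step counter $|(\ell,k,j)|$ yields~\eqref{eq:linconv}.

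\textbf{Contraction within the algebraic and linearization loops.} Within a $j$-loop ($\ell,k$ fixed, $k\ge 1$), the algebraic solver contraction~\eqref{eq:pcg:contraction} directly contracts $\enorm{u_\ell^{k,\star}-u_\ell^{k,j}}$ by $\qpcg$, and stability~\eqref{axiom:stability} plus the triangle inequality transfer this to contraction of $\eta_\ell(u_\ell^{k,j}) + \enorm{u^\star-u_\ell^{k,j}}$ up to constants depending only on $\enorm{u^\star-u_\ell^{k,\star}}$; a geometric absorption then gives contraction of the full $\Delta_\ell^{k,j}$. For the linearization $k$-loop, one starts from the exact-solve contraction~\eqref{eq:picard:contraction0} and writes
\begin{align*}
 \enorm{u_\ell^\star - u_\ell^{k,\j}}
 \le \enorm{u_\ell^\star - u_\ell^{k,\star}} + \enorm{u_\ell^{k,\star} - u_\ell^{k,\j}}
 \le \qpic\,\enorm{u_\ell^\star - u_\ell^{k-1,\j}} + \enorm{u_\ell^{k,\star} - u_\ell^{k,\j}}.
\end{align*}
The algebraic stopping criterion~\eqref{eq:st_crit_pcg}, together with~\eqref{eq2:pcg:contraction}, bounds $\enorm{u_\ell^{k,\star}-u_\ell^{k,\j}}$ by a small multiple of $\eta_\ell(u_\ell^{k,\j})+\enorm{u_\ell^{k,\j}-u_\ell^{k-1,\j}}$, which is itself controlled by $\eta_\ell + \enorm{u_\ell^\star-u_\ell^{k,\j}}+\enorm{u_\ell^\star-u_\ell^{k-1,\j}}$. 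A short absorption argument, valid provided $\lpcg+\lpcg/\lpic$ is smaller than an explicit threshold $\lpcg^\star$, produces a perturbed contraction $\enorm{u_\ell^\star-u_\ell^{k,\j}}\le \tilde q_{\rm Pic}\,\enorm{u_\ell^\star-u_\ell^{k-1,\j}} + \varepsilon\,\eta_\ell(u_\ell^{k,\j})$ with $\tilde q_{\rm Pic}<1$.

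\textbf{Contraction across mesh-refinement.} At $k=\k(\ell)$, the Banach--Picard stopping criterion~\eqref{eq:st_crit_pic} and the previous perturbed contraction yield $\enorm{u_\ell^\star - u_\ell^{\k,\j}}\lesssim \lpic\,\eta_\ell(u_\ell^{\k,\j})$, so (up to a factor depending on $\lpic$) $u_\ell^{\k,\j}$ behaves like the exact discrete solution $u_\ell^\star$. Therefore the classical AFEM machinery applies: estimator reduction follows from stability~\eqref{axiom:stability}, reduction~\eqref{axiom:reduction}, and the D\"orfler property~\eqref{eq:doerfler}, while reliability~\eqref{axiom:reliability} together with~\eqref{eq:cea} closes the loop between error and estimator. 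Following the strategy of~\cite{ckns2008} and~\cite{banach}, one obtains contraction of the combined quantity $\enorm{u^\star-u_{\ell+1}^{0,0}}^2+\gamma\,\eta_{\ell+1}(u_{\ell+1}^{0,0})^2$ against $\enorm{u^\star-u_\ell^{\k,\j}}^2+\gamma\,\eta_\ell(u_\ell^{\k,\j})^2$, provided $\lpic/\theta<\lpic^\star$ so that the linearization perturbation can be absorbed.

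\textbf{Combination and main obstacle.} The technical heart of the proof is to splice the three single-step contractions (algebraic, linearization, and mesh) into a single inequality $\widetilde\Delta_{(\cdot)^+}\le q\,\widetilde\Delta_{(\cdot)}$ for a weighted quasi-error $\widetilde\Delta_\ell^{k,j}:=\enorm{u^\star-u_\ell^{k,j}}^2+\gamma_1\,\enorm{u_\ell^{k,\star}-u_\ell^{k,j}}^2+\gamma_2\,\eta_\ell(u_\ell^{k,j})^2$ with weights $\gamma_1,\gamma_2>0$ chosen so that the contraction holds uniformly in the three cases. This balancing is where the explicit thresholds $\lpcg^\star$ and $\lpic^\star$ arise, and its delicate point is that transitions between loops (e.g.\ from $(\ell,k,\j)$ to $(\ell,k+1,0)=(\ell,k+1,1)$, or from $(\ell,\k,\j)$ to $(\ell+1,0,0)$) involve re-indexing of the exact discrete solutions $u_\ell^{k,\star}$ and $u_\ell^\star$, so one must verify that the algebraic component $\enorm{u_\ell^{k,\star}-u_\ell^{k,j}}$ at the new level is already controlled by $\widetilde\Delta$ at the old level (through~\eqref{eq2:pcg:contraction} and~\eqref{eq:alg:contraction}). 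Once this is settled, equivalence $\Delta_\ell^{k,j}\simeq\sqrt{\widetilde\Delta_\ell^{k,j}}$ together with geometric iteration in $|(\ell,k,j)|$ yields~\eqref{eq:linconv} with constants depending only on the parameters listed in the theorem.
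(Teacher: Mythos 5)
There is a genuine gap at the heart of your plan: a uniform one-step contraction $\widetilde\Delta_{(\cdot)^+}\le q\,\widetilde\Delta_{(\cdot)}$, for \emph{any} fixed-weight combination of the three components entering~\eqref{eq:def:Delta}, cannot hold at every index of $\QQ$. The index set also counts the nested-iteration transitions, e.g.\ $(\ell,k,\j)\to(\ell,k+1,0)$ and $(\ell,\k,\j)\to(\ell+1,0,0)\to(\ell+1,1,0)$, where the computed iterate (and, within a fixed mesh, also the estimator and the overall error) is literally unchanged; in the situation discussed right after the theorem (where $u_\ell^{k,\j}=u_\ell^{k,\star}=u_\ell^\star$), even the algebraic component is unchanged, so $\Delta_{\ell}^{k+1,0}=\Delta_\ell^{k,\j}\neq 0$ and no weighted variant can strictly decrease at that step. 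This is precisely why the theorem asserts only R-linear convergence with a constant $\Clin\ge 1$, and why the paper does not prove a per-step contraction at all: it proves the tail-summability estimate $\sum_{(\ell,k,j)>(\ell',k',j')}\Delta_\ell^{k,j}\le \Csum\,\Delta_{\ell'}^{k',j'}$ (Lemma~\ref{lemma:ell-k-j}, built on the equivalent quantity $\Alpha_\ell^{k,j}$ in which the overall error is replaced by $\enorm{u_\ell^\star-u_\ell^{k,j}}$, on geometric-series arguments for the two inner loops, on the mesh-level linear convergence imported from~\cite{banach}, and on a case analysis for non-terminating loops), and then converts summability into~\eqref{eq:linconv} with $\Clin=1+\Csum$, $\qlin=\Csum/(1+\Csum)$ by the elementary argument of~\cite[Lemma~4.9]{axioms}. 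Your "geometric iteration in $|(\ell,k,j)|$" would need exactly such a device; treating the transition steps merely by showing the new algebraic component is "controlled by $\widetilde\Delta$ at the old level" only gives non-expansion up to a constant, which does not combine with contraction at the remaining steps into a uniform per-step rate.

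A second, related flaw is your claim that within the algebraic $j$-loop "a geometric absorption then gives contraction of the full $\Delta_\ell^{k,j}$". Inside that loop the overall error and the estimator do not contract: they tend to the positive limits $\enorm{u^\star-u_\ell^{k,\star}}$ and $\eta_\ell(u_\ell^{k,\star})$. What rescues the argument is the \emph{failure} of the stopping criteria before the stopping indices, i.e.~\eqref{eq:pcg:stopping_neg} for $j<\j(\ell,k)$ and \eqref{eq:picard:stopping_neg} for $k<\k(\ell)$: these show that, as long as the loop continues, the non-contracting components (estimator, linearization error) are dominated by the contracting algebraic (resp.\ linearization) error, which is how the paper obtains the key bounds $\Alpha_\ell^{k,j}\lesssim\enorm{u_\ell^{k,\star}-u_\ell^{k,j-1}}$ and $\Alpha_\ell^{k,\j}\lesssim\enorm{u_\ell^\star-u_\ell^{k-1,\j}}$ and hence summable geometric tails within each loop. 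Your proposal only invokes the stopping criteria \emph{at} $j=\j$ and $k=\k$, never their negations at intermediate steps, so the control of $\Delta$ (or of your $\widetilde\Delta$) during the inner loops — whose lengths are not a priori bounded, and which may even fail to terminate on the final mesh, a case the paper must and does treat separately — is not established.
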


Note that $\Delta_{\ell'}^{k',j'} = \Delta_\ell^{k,j}$ when $(\ell',k',j')=(\ell,k,j)$, and then~\eqref{eq:linconv} holds with equality for $\Clin=1$. There are other cases where $u_{\ell'}^{k',j'} = u_\ell^{k,j}$ and where $u_{\ell'}^{k',j'} = u_\ell^{k,j}$ together with $\TT_{\ell'} = \TT_\ell$, and consequently $\eta_{\ell'}(u_{\ell'}^{k',j'}) = \eta_\ell(u_\ell^{k,j})$, related to our notational choice for $\QQ$ in~\eqref{eq_QQ} that also indexes nested iterates. The case with $\ell'=\ell$ arises for instance when $j = \j$, $j'=0$, and $k' = k+1$; see step (ia) of Algorithm~\ref{algorithm}. Note, however, that in such a situation, typically $u_{\ell'}^{k',\star} \neq u_\ell^{k,\star}$, and consequently $\Delta_{\ell'}^{k',j'} \neq \Delta_\ell^{k,j}$. A situation where $\Delta_{\ell'}^{k',j'} = \Delta_\ell^{k,j}$ for $(\ell',k',j') \neq (\ell,k,j)$ can nevertheless also appear, and is covered in~\eqref{eq:linconv}. For instance, in the above example, when $j = \j$, $j'=0$, $k' = k+1$, and $\ell' = \ell$, and where moreover $u_\ell^{k,j} = u_\ell^{k,\star} = u_\ell^\star$ (so that $u_\ell^{k,j} = u_\ell^{k,\star} = u_{\ell'}^{k',\star} = u_{\ell'}^{k',j'} = u_\ell^\star$), Algorithm~\ref{algorithm} only effectuates one step of the algebraic solver on the linearization step $k'$, so that $\Clin = 1/\qlin$ leads to equality in~\eqref{eq:linconv} where now $|(\ell',k',j')|-|(\ell,k,j)| = 1$.

The second main result states optimal decay rate of the quasi-error $\Delta_\ell^{k,j}$ of~\eqref{eq:def:Delta} (and consequently of the total error $\enorm{u^\star - u_\ell^{k,j}}$) in terms of the number of degrees of freedom added in the space $\XX_\ell$ with respect to $\XX_0$. More precisely, the result states that if the unknown weak solution $u$ of~\eqref{eq:exact_solution} can be approximated at algebraic decay rate $s$ with respect to the number of mesh elements added in the refinement of $\TT_0$ (plus one) for a best-possible mesh, then Algorithm~\ref{algorithm} achieves the same decay rate $s$ with respect to the number of elements actually added in Algorithm~\ref{algorithm}, $(\#\TT_{\ell} - \#\TT_0 + 1 )$, up to a generic multiplicative constant. The proof of the following Theorem~\ref{theorem:rate} is given in Section~\ref{section:optimal_rate_DoFs}.

\begin{theorem}[optimal decay rate wrt.\ degrees of freedom]\label{theorem:rate}
Suppose~\eqref{axiom:stability}--\eqref{axiom:discrete_reliability} and~\eqref{axiom:sons}--\eqref{axiom:closure}.
Recall $\lpcg^\star, \lpic^\star > 0$ from Theorem~\ref{theorem:linconv}.
Let $\Cpic:=\qpic/(1-\qpic)>0$, $\Cpcg:=\qpcg/(1-\qpcg)>0$, and $\theta_{\rm opt}:=(1+\Cstab^2\Crel^2)^{-1}$. Then, there exists $\theta^\star$ such that for all $0<\lpcg,\lpic,\theta$  with $0<\theta<\min\{1,\theta^\star\}$ as well as $\lpcg<1$, $0<\lpcg + \lpcg/\lpic<\lpcg^\star$, and $0<\lpic/\theta<\lpic^\star$, it holds that
\begin{align}\label{eq:opt:theta'}
0<\theta':=\frac{\theta+\Cstab\Big( (1+\Cpic)\Cpcg\lpcg + \big[\Cpic + (1+\Cpic)\Cpcg\lpcg\big]\lpic\Big)}{1-\lpic\,/\lpic^\star}<\theta_{\rm opt},
\end{align}
where the constant $\theta^\star>0$ depends only on $\Cstab$, $\qpic$, and $\qpcg$.  Let $s>0$ and define
\begin{align}\label{eq:As}
\norm{u^\star}{\mathbb{A}_s}:=\sup_{N\in\N_0}\Big((N+1)^s\inf_{\TT_{\rm opt}\in\T(N)}\big[\,\enorm{u^\star-u_{\rm opt}^\star}+\eta_{\rm opt}(u_{\rm opt}^\star)\,\big]\Big) \in \R_{\ge 0}\cup\{\infty\},
\end{align}
where
\begin{align*}
\T(N) := \set{\TT\in\T}{\#\TT - \#\TT_0 \leq N}.
\end{align*}
Then, there exist $\copt, \Copt > 0$ such that
\begin{align} \label{eq:opt_rate}
\copt^{-1} \, \norm{u^\star}{\mathbb{A}_s}
		&\le \sup_{(\ell,k,j) \in \QQ} ( \#\TT_{\ell} - \#\TT_0 + 1 )^s \Delta_{\ell}^{k,j}
		\le \Copt \, \max\{\norm{u^\star}{\mathbb{A}_s},\Delta_0^{0,0}\}.
\end{align}
The constant $\copt > 0$ depends only on~$\Ccea=L/\alpha$, $\Cstab$, $\Crel$, $\Cson$, $\#\TT_0$, $s$, and, if $\underline\ell<\infty$, additionally on $\underline\ell$.
The constant $\Copt > 0$ depends only on~$\Cstab$, $\Crel$, $\Cmark$, $1-\lpic/\lpic^\star$, $\Ccea=L/\alpha$, $\Crel'$, $\Cmesh$, $\Clin$, $\qlin$, $\#\TT_0$, and $s$. The maximum in the right inequality is only needed if $\ell=0$. If $\ell\ge 1$, the maximum $\max\{\norm{u^\star}{\mathbb{A}_s},\Delta_0^{0,0}\}$ can be replaced by $\norm{u^\star}{\mathbb{A}_s}$.
\end{theorem}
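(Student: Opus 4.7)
\emph{Approach.} The proof follows the standard Stevenson--CKNS rate-optimality roadmap \cite{stevenson2007,ckns2008} in the axiomatic form of~\cite{axioms}, with the additional twist that the D\"orfler marking~\eqref{eq:doerfler} is performed at the computed iterate $u_\ell^{\k,\j}$ rather than at the unavailable exact discrete solution $u_\ell^\star$ of~\eqref{eq:exact_solution_ell}. The two main ingredients are the linear convergence from Theorem~\ref{theorem:linconv} and a D\"orfler comparison lemma driven by the nonlinear approximation class~\eqref{eq:As}. The lower bound in~\eqref{eq:opt_rate} is the easier direction: C\'ea's lemma~\eqref{eq:cea} gives $\enorm{u^\star-u_\ell^\star}\le\Ccea\enorm{u^\star-u_\ell^{k,j}}$ and stability~\eqref{axiom:stability} together with the triangle inequality yields $\eta_\ell(u_\ell^\star)\le\eta_\ell(u_\ell^{k,j})+\Cstab\enorm{u_\ell^{k,j}-u_\ell^\star}$, so that $\enorm{u^\star-u_\ell^\star}+\eta_\ell(u_\ell^\star)\lesssim\Delta_\ell^{k,j}$. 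For every $N\in\N_0$ one then picks the largest $\ell\le\underline\ell$ with $\#\TT_\ell-\#\TT_0\le N$; since $\TT_\ell\in\T(N)$, the infimum in~\eqref{eq:As} is bounded by $\Delta_\ell^{k,j}$, while the splitting property~\eqref{axiom:sons} controls $N+1$ by a $\#\TT_0$-dependent multiple of $\#\TT_\ell-\#\TT_0+1$. The case $\underline\ell<\infty$ needs a separate treatment for $N>\#\TT_{\underline\ell}-\#\TT_0$, which is the source of the dependence of $\copt$ on $\underline\ell$ in that situation.

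\emph{Main step (upper bound).} The technical heart is a \emph{comparison lemma}: under the assumption $\theta'<\theta_{\rm opt}$ from~\eqref{eq:opt:theta'}, for every $(\ell,\k,\j)\in\QQ$ there exists a refinement $\TT_\varepsilon\in\T$ of $\TT_0$ with
\[
  \#\TT_\varepsilon-\#\TT_0 \lesssim \norm{u^\star}{\mathbb{A}_s}^{1/s}\,(\Delta_\ell^{\k,\j})^{-1/s},
\]
such that $\RR_\ell:=\TT_\ell\setminus(\TT_\ell\oplus\TT_\varepsilon)$ satisfies the D\"orfler property~\eqref{eq:doerfler} with parameter $\theta$ when evaluated at $u_\ell^{\k,\j}$. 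I would prove this by combining discrete reliability~\eqref{axiom:discrete_reliability} on the overlay with stability~\eqref{axiom:stability}, and then controlling the passage $u_\ell^\star\mapsto u_\ell^{\k,\j}$ via~\eqref{eq2:pcg:contraction}, the stopping criteria~\eqref{eq:st_crit_pcg}--\eqref{eq:st_crit_pic}, and Lemma~\ref{lemma:picard:contraction}; the perturbation terms collected in this step are exactly those appearing in the numerator of $\theta'$ in~\eqref{eq:opt:theta'}, and the $(1-\lpic/\lpic^\star)$ denominator enters when moving from $\enorm{u^\star-u_\ell^{\k,\j}}$ back to $\Delta_\ell^{\k,\j}$. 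The $\Cmark$-quasi-minimality of $\MM_\ell$ then yields $\#\MM_\ell\lesssim\#\RR_\ell\lesssim\norm{u^\star}{\mathbb{A}_s}^{1/s}(\Delta_\ell^{\k,\j})^{-1/s}$. The mesh-closure property~\eqref{axiom:closure} bounds $\#\TT_\ell-\#\TT_0\lesssim\sum_{\ell'<\ell}\#\MM_{\ell'}$, and the linear convergence of Theorem~\ref{theorem:linconv}, applied between $(\ell',\k,\j)$ and $(\ell,\k,\j)$ with $|(\ell,\k,\j)|-|(\ell',\k,\j)|\ge\ell-\ell'$, turns this into a geometric sum in $\qlin^{(\ell-\ell')/s}$, bounded by a constant multiple of $(\Delta_\ell^{\k,\j})^{-1/s}$. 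This proves the desired estimate along the subsequence $(\ell,\k,\j)\in\QQ$. Extension to arbitrary $(\ell,k,j)\in\QQ$ follows by one more application of linear convergence between $(\ell,k,j)$ and the nearest $(\ell',\k,\j)\in\QQ$ with $\ell'\ge\ell$, possibly absorbing $\Delta_0^{0,0}$ into the right-hand side when $\ell=0$.

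\emph{Main obstacle.} The chief difficulty compared to the exact adaptive FEM analysis of~\cite{ckns2008,axioms} is the perturbation bookkeeping that keeps the two layers of inexactness (Picard linearization measured by $\lpic$ and algebraic solver measured by $\lpcg$) compatible with the D\"orfler comparison. Concretely, one must verify that the parameter region defined simultaneously by $0<\theta'<\theta_{\rm opt}$ in~\eqref{eq:opt:theta'} and by the constraints $\lpic/\theta<\lpic^\star$, $\lpcg+\lpcg/\lpic<\lpcg^\star$ from Theorem~\ref{theorem:linconv} is non-empty. This is achieved by first fixing $\theta<\theta_{\rm opt}$; since the numerator in~\eqref{eq:opt:theta'} tends to $\theta$ and its denominator to $1$ as $\lpic,\lpcg\to 0$, sufficiently small choices of $\lpic$ (then $\lpcg$) deliver $\theta'<\theta_{\rm opt}$ and, simultaneously, satisfy the stopping-tolerance constraints of Theorem~\ref{theorem:linconv}. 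The existence of the threshold $\theta^\star>0$ encoding this admissible range, and depending only on $\Cstab$, $\qpic$, and $\qpcg$, follows from the continuity of these bounds. This intertwining between the marking parameter and the inner stopping tolerances is the feature that distinguishes our analysis from the classical exact setting.
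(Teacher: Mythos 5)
Your proposal follows essentially the same route as the paper: the lower bound via C\'ea's lemma~\eqref{eq:cea}, stability~\eqref{axiom:stability}, the splitting property~\eqref{axiom:sons} and a separate treatment of $\underline\ell<\infty$; the upper bound via the comparison lemma (your overlay construction $\RR_\ell=\TT_\ell\setminus(\TT_\ell\oplus\TT_\varepsilon)$ with discrete reliability~\eqref{axiom:discrete_reliability} is precisely the content of Lemma~\ref{lemma:comparison}, quoted from~\cite{axioms}), the perturbation bookkeeping from $u_\ell^\star$ to $u_\ell^{\k,\j}$ that produces exactly the numerator and the $(1-\lpic/\lpic^\star)$ denominator of $\theta'$ in~\eqref{eq:opt:theta'}, quasi-minimality of $\MM_\ell$ in~\eqref{eq:doerfler}, the mesh-closure estimate~\eqref{axiom:closure}, and the geometric series generated by Theorem~\ref{theorem:linconv}; also your resolution of the parameter interplay ($\theta'\to 0$ as $\theta,\lpic,\lpcg\to 0$, giving $\theta^\star$) matches the paper's Step~1.

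One detail in your final extension step is stated backwards and, as written, would fail: you propose to bound $\Delta_\ell^{k,j}$ for an intermediate index by ``linear convergence between $(\ell,k,j)$ and the nearest $(\ell',\k,\j)$ with $\ell'\ge\ell$''. Linear convergence~\eqref{eq:linconv} only bounds \emph{later} quasi-errors by \emph{earlier} ones, so a stopping triple with $\ell'\ge\ell$ (which lies later in $\QQ$, or later on the same level) cannot dominate $\Delta_\ell^{k,j}$. The repair is immediate with the tools you already have and is what the paper does: either collapse the sum $\sum_{\ell'<\ell}(\Delta_{\ell'}^{\k,\j})^{-1/s}$ directly at the arbitrary index $(\ell,k,j)$ (all these stopping triples are earlier, so~\eqref{eq:linconv} applies in the right direction), or compare with the \emph{previous} level's stopping triple $(\ell-1,\k,\j)$ together with $\#\TT_\ell\le\Cson\#\TT_0(\#\TT_{\ell-1}-\#\TT_0+1)$. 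The latter comparison is also what handles the terminal level $\ell=\underline\ell<\infty$ when one of the inner loops does not terminate, so that $(\underline\ell,\k,\j)$ with finite $\k,\j$ need not exist; this case, and the low-level exceptions $\underline\ell\in\{0,1\}$ (where $\Delta_0^{0,0}$ respectively the estimate~\eqref{eq:eta-star} on $\TT_0$ enters), should be spelled out as in the paper's Step~4, but they require no new ideas beyond Lemma~\ref{lemma:ell-k-j} and the stopping criteria.
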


Note that $\Delta_0^{0,0}$ can be arbitrarily bad with bad initial guess $u_0^{0,0}$. However, $\norm{u^\star}{\mathbb{A}_s}$ as well as the constant $\Copt$ are independent of the initial guess, so that the upper bound in~\eqref{eq:opt_rate} cannot avoid $\max\{\norm{u^\star}{\A_s},\Delta_0^{0,0}\}$ for the case $\ell=0$. Such a phenomenon does not appear at later stages, since the stopping criteria~\eqref{eq:st_crit_pcg} and~\eqref{eq:st_crit_pic} ensure that, though $u_{\ell}^{\k,\j}$ does not in general coincide with $u_\ell^{\star}$, it is sufficiently accurate. If one restricts the indices to $(\ell,k,j)\in\QQ$ with $\ell\ge 1$, then the upper bound in~\eqref{eq:opt_rate} may omit $\Delta_0^{0,0}$.

Our last main result states that Algorithm~\ref{algorithm} drives the quasi-error down at each possible rate $s$ not only with respect to the number of degrees of freedom added in the space $\XX_\ell$ in comparison with $\XX_0$, but actually also with respect to the overall computational cost expressed as a cumulated sum of the number of degrees of freedom. This is an important improvement of Theorem~\ref{theorem:rate}. More precisely, under the same conditions as above, i.e., if the unknown weak solution $u$ of~\eqref{eq:exact_solution} can be approximated at algebraic decay rate $s$ with respect to the number of mesh elements added in the refinement of $\TT_0$ (plus one), then Algorithm~\ref{algorithm} generates a sequence of triple-$(\ell,k,j)$-indexed approximations (mesh, linearization, algebraic solver) such that the quasi-error decays down at rate $s$ with respect to the overall algorithmic cost expressed as the sum of the number of simplices $\#\TT_{\ell}$ over all steps $(\ell,k,j)\in\QQ$ effectuated by Algorithm~\ref{algorithm}. The proof of the following Theorem~\ref{theorem:cost} is given in Section~\ref{section:optimal_cost}.

\begin{theorem}[optimal decay rate wrt.\ overall computational cost]\label{theorem:cost} Let the assumptions of Theorem~\ref{theorem:rate} be verified. Then
\begin{align} \label{eq:opt_cost}
\copt^{-1} \, \norm{u^\star}{\mathbb{A}_s}
		&\le \sup_{(\ell',k',j') \in \QQ} \bigg( \sum_{\substack{(\ell,k,j)\in\QQ \\ (\ell,k,j) \le (\ell',k',j')}} \# \TT_\ell \bigg)^s \Delta_{\ell'}^{k',j'}
				\le \Copt' \, \max\{\norm{u^\star}{\mathbb{A}_s},\Delta_0^{0,0}\}.
\end{align}
The maximum in the right inequality is only needed if $\ell=0$. If $\ell\ge 1$, the maximum $\max\{\norm{u^\star}{\mathbb{A}_s},\Delta_0^{0,0}\}$ can be replaced by $\norm{u^\star}{\mathbb{A}_s}$. While $\copt>0$ is the constant of Theorem~\ref{theorem:rate}, the constant $\Copt'>0$ reads $\Copt':=(\#\TT_0)^s\,\Copt\,\Clin\,\big(1-\qlin^{1/s}\big)^{-s}$.\end{theorem}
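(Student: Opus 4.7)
The plan is to combine the two already-established main results in a clean way: linear convergence from Theorem~\ref{theorem:linconv} converts the cumulative sum over all steps into a geometric series, and the per-index rate bound from Theorem~\ref{theorem:rate} controls each individual $\#\TT_\ell$ in terms of $\Delta_\ell^{k,j}$. The constant $\Copt'=(\#\TT_0)^s\,\Copt\,\Clin\,(1-\qlin^{1/s})^{-s}$ proposed in the statement is the natural outcome of this combination, so no new ideas are needed beyond a careful bookkeeping.

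For the lower bound in~\eqref{eq:opt_cost}, observe that the summand corresponding to the index $(\ell',k',j')$ itself already gives
\begin{align*}
\sum_{(\ell,k,j)\in\QQ,\,(\ell,k,j)\leq(\ell',k',j')} \#\TT_\ell \;\geq\; \#\TT_{\ell'} \;\geq\; \#\TT_{\ell'} - \#\TT_0 + 1,
\end{align*}
so the left-hand supremum in~\eqref{eq:opt_cost} dominates the supremum in~\eqref{eq:opt_rate}, and the lower bound follows directly from Theorem~\ref{theorem:rate}.

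For the upper bound, write $M:=\Copt\,\max\{\norm{u^\star}{\A_s},\Delta_0^{0,0}\}$. By Theorem~\ref{theorem:rate}, for every $(\ell,k,j)\in\QQ$
\begin{align*}
\#\TT_\ell \;\leq\; \#\TT_0\,(\#\TT_\ell-\#\TT_0+1)\;\leq\; \#\TT_0\,M^{1/s}\,(\Delta_\ell^{k,j})^{-1/s},
\end{align*}
where the first inequality uses $\#\TT_\ell\geq\#\TT_0\geq 1$. On the other hand, by Theorem~\ref{theorem:linconv}, for any $(\ell,k,j)\leq(\ell',k',j')$,
\begin{align*}
(\Delta_\ell^{k,j})^{-1/s} \;\leq\; \Clin^{1/s}\,\qlin^{(|(\ell',k',j')|-|(\ell,k,j)|)/s}\,(\Delta_{\ell'}^{k',j'})^{-1/s}.
\end{align*}
Combining the two and summing over all $(\ell,k,j)\in\QQ$ with $(\ell,k,j)\leq(\ell',k',j')$, one obtains (using that $|\cdot|$ is injective on $\QQ$, so different index triples give distinct exponents)
\begin{align*}
\sum_{(\ell,k,j)\leq(\ell',k',j')}\!\!\! \#\TT_\ell
\;\leq\; \#\TT_0\,M^{1/s}\,\Clin^{1/s}\,(\Delta_{\ell'}^{k',j'})^{-1/s}\sum_{n=0}^{\infty}\qlin^{n/s}
\;=\; \frac{\#\TT_0\,M^{1/s}\,\Clin^{1/s}}{1-\qlin^{1/s}}\,(\Delta_{\ell'}^{k',j'})^{-1/s}.
\end{align*}
Raising to the power $s$ and multiplying by $\Delta_{\ell'}^{k',j'}$ yields the desired bound with $\Copt'=(\#\TT_0)^s\,\Clin\,(1-\qlin^{1/s})^{-s}\,\Copt$. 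The remark that $\max\{\norm{u^\star}{\A_s},\Delta_0^{0,0}\}$ may be replaced by $\norm{u^\star}{\A_s}$ when $\ell\geq 1$ is simply inherited from the analogous statement in Theorem~\ref{theorem:rate}.

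The only point deserving attention is ensuring that the exponents $|(\ell',k',j')|-|(\ell,k,j)|$ run through distinct nonnegative integers as $(\ell,k,j)$ varies (so the geometric series bound is valid), which follows immediately from the total ordering of $\QQ$ and the definition of $|\cdot|$. Apart from this, the proof is a direct and essentially mechanical combination of Theorems~\ref{theorem:linconv} and~\ref{theorem:rate}; no genuinely new estimates are required.
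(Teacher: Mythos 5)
Your proposal is correct and follows essentially the same route as the paper: the lower bound comes from comparing the cumulative sum with $\#\TT_{\ell'}-\#\TT_0+1$ and invoking Theorem~\ref{theorem:rate}, while the upper bound combines the rate bound $(\#\TT_\ell-\#\TT_0+1)^s\Delta_\ell^{k,j}\lesssim\max\{\norm{u^\star}{\mathbb{A}_s},\Delta_0^{0,0}\}$ with the elementary estimate $\#\TT_\ell\le\#\TT_0(\#\TT_\ell-\#\TT_0+1)$ and the geometric series generated by the linear convergence of Theorem~\ref{theorem:linconv}, yielding exactly the constant $\Copt'=(\#\TT_0)^s\,\Copt\,\Clin\,(1-\qlin^{1/s})^{-s}$. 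No gaps.
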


Analogously to the comments after Theorem~\ref{theorem:rate}, the upper estimate in~\eqref{eq:opt_cost} cannot avoid $\max\{\norm{u^\star}{\A_s},\Delta_0^{0,0}\}$ for the case $\ell'=\ell=0$. As above, if one restricts the indices to $(\ell',k',j'),(\ell,k,j)\in\QQ$ with $\ell',\ell\ge 1$, then the upper bound in~\eqref{eq:opt_cost} may omit $\Delta_0^{0,0}$.

\section{Auxiliary results}
\label{section:aux}

\subsection{Some observations on Algorithm~\ref{algorithm}}

This section collects some elementary observations on Algorithm~\ref{algorithm} in what concerns nested iteration and stopping criteria. The given initial value of Algorithm~\ref{algorithm} reads
\begin{align}
 u_0^{0,0} = u_0^{0,\j} =  u_0^{0,\star}\in \XX_0.
\end{align}
If $(\ell,0,0) \in \QQ$ with $\ell \geq 1$, then
\begin{align}\label{eq:init:**}
 u_\ell^{0,\star}:=u_\ell^{0,0} := u_\ell^{0,\j} := u_{\ell-1}^{\k,\j} \in \XX_{\ell-1} \subseteq \XX_\ell.
\end{align}
If $(\ell,k,0) \in \QQ$, then the initial guess for the algebraic solver reads
\begin{align}\label{eq:init:*}
 u_\ell^{k,0} = \begin{cases}
 u_0^{0,0}\quad &\text{for $\ell=0$},\\
 u_{\ell-1}^{\k,\j} \quad &\text{if $k = 0$ and $\ell \geq 1$},\\
 u_\ell^{k-1,\j} \quad &\text{if $k > 0$},
 \end{cases}
\end{align}
i.e., the algebraic solver employs \emph{nested iteration}.
The stopping criterion~\eqref{eq:st_crit_pcg} of Algorithm~\ref{algorithm} guarantees that $\j(\ell,k) \ge 1$ if $k>0$ and, for all $(\ell, k, j) \in \QQ$, it holds that
\begin{align}\label{eq:pcg:stopping}
 \enorm{u_\ell^{k,\j} - u_\ell^{k,\j-1} }
 &\le  \lpcg \, \big[\, \eta_\ell(u_\ell^{k,\j}) + \enorm{u_\ell^{k,\j} - u_\ell^{k-1,\j}} \,\big]
 \quad \text{for } j = \j(\ell,k),
 \\ \label{eq:pcg:stopping_neg}
 \enorm{u_\ell^{k,j} - u_\ell^{k,j-1}}
 &>  \lpcg \, \big[\, \eta_\ell(u_\ell^{k,j}) + \enorm{u_\ell^{k,j} - u_\ell^{k-1,\j}} \,\big]
 \quad \text{for } j < \j(\ell,k),
\end{align}
i.e., the algebraic error estimate $\enorm{u_\ell^{k,j} - u_\ell^{k,j-1}}$ only drops below the discretization plus linearization error estimate at the stopping iteration $\j = \j(\ell,k)$.

The final iterates $u_\ell^{k,\j}$ of the algebraic solver are used to obtain the perturbed Banach--Picard iterates $u_\ell^{k+1,\j}$ for $k > 0$; see~\eqref{eq:banach_non_pert}. The stopping criterion~\eqref{eq:st_crit_pic} of Algorithm~\ref{algorithm} guarantees that $\k(\ell) \ge 1$ and, for all $(\ell, k, \j) \in \QQ$, it holds that
\begin{align}\label{eq:picard:stopping}
 \enorm{u_\ell^{\k,\j} - u_\ell^{\k-1,\j}}
 &\le \lpic \, \eta_\ell(u_\ell^{\k,\j})
 \quad \text{for } k = \k(\ell),
 \\ \label{eq:picard:stopping_neg}
 \enorm{u_\ell^{k,\j} - u_\ell^{k-1,\j}}
 &> \lpic \, \eta_\ell(u_\ell^{k,\j})
 \quad \text{for } k < \k(\ell),
\end{align}
i.e., the linearization error estimate $\enorm{u_\ell^{k,\j} - u_\ell^{k-1,\j}}$ only drops below the discretization error estimate at the stopping iteration $\k = \k(\ell)$.

\subsection{Contraction of the perturbed Banach--Picard iteration}

Assumption~\eqref{eq:alg:contraction} immediately implies the algebraic solver contraction~\eqref{eq:pcg:contraction} and reliability~\eqref{eq2:pcg:contraction} of the algebraic error estimate $\enorm{u_\ell^{k,j} - u_\ell^{k,j-1}}$. Similarly, one step of the non-perturbed Banach--Picard iteration~\eqref{eq:banach_non_pert} (i.e., with an exact algebraic solve of problem~\eqref{eq:banach_non_pert} with the datum $u_\ell^{k-1,\j}$) leads to contraction~\eqref{eq:picard:contraction0} and consequently to the reliability
\begin{align}\label{eq2:picard:contraction0}
 \frac{1 - \qpic}{\qpic} \, \enorm{u_\ell^{\star} - u_\ell^{k,\star}}
 \le \enorm{u_\ell^{k,\star} - u_\ell^{k-1,\j}}
 \le (1 + \qpic) \, \enorm{u_\ell^{\star} - u_\ell^{k-1,\j}}
\end{align}
of the unavailable linearization error estimate $\enorm{u_\ell^{k,\star} - u_\ell^{k-1,\j}}$.
As our first result, we now show that, for sufficiently small stopping parameters $0 < \lpcg$ in~\eqref{eq:st_crit_pcg}, we also get that the {\em perturbed} Banach--Picard iteration is a {\em contraction}. Recall that $u_\ell^\star \in \XX_\ell$ is the (unavailable) exact discrete solution given by~\eqref{eq:exact_solution_ell}, that $u_\ell^{k,\star} \in \XX_\ell$ is the (unavailable) exact linearization solution given by~\eqref{eq:banach_non_pert}, and that $u_\ell^{k,\j} \in \XX_\ell$ is the computed solution for which the algebraic solver is stopped; see~\eqref{eq:st_crit_pcg} (resp.~\eqref{eq:pcg:stopping}--\eqref{eq:pcg:stopping_neg}) for the stopping criterion.

\begin{lemma}\label{lemma:picard:contraction}
There exists $\lpcg^\star > 0$ only depending on $\qpcg$ and $\qpic$ such that
\begin{align}\label{eq:qpic'}
 0 < \qpic' := \frac{\qpic + \frac{\qpcg}{1-\qpcg} \, \lpcg^\star}{1 - \frac{\qpcg}{1-\qpcg} \, \lpcg^\star} < 1.
\end{align}
Moreover, for all stopping parameters $0<\lpcg<1$ and $0<\lpic$ from~\eqref{eq:st_crit_pcg}--\eqref{eq:st_crit_pic} such that $0 <\lpcg + \lpcg/\lpic < \lpcg^\star$, it holds that
\begin{align}\label{eq:picard:contraction}
 \enorm{u_\ell^\star - u_\ell^{k,\j}} \le \qpic' \, \enorm{u_\ell^\star - u_\ell^{k-1,\j}}
 \quad \text{for all } 1 \le k < \k(\ell).
\end{align}
This also implies that
\begin{align}\label{eq2:picard:contraction}
 \frac{1 - \qpic'}{\qpic'} \, \enorm{u_\ell^\star - u_\ell^{k,\j}}
 \le \enorm{u_\ell^{k,\j} - u_\ell^{k-1,\j}}
 \le (1 + \qpic') \, \enorm{u_\ell^\star - u_\ell^{k-1,\j}}.
\end{align}
\end{lemma}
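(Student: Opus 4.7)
The idea is to split the perturbed error into the ideal (non-perturbed) linearization step plus an algebraic perturbation, and then use the failed linearization stopping criterion (valid because $k<\k(\ell)$) to absorb the estimator contribution hidden in the algebraic stopping criterion.

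First, by the triangle inequality together with the ideal Banach--Picard contraction~\eqref{eq:picard:contraction0} and the consequence~\eqref{eq2:pcg:contraction} of the algebraic contraction, I write
\begin{align*}
 \enorm{u_\ell^\star - u_\ell^{k,\j}}
 &\le \enorm{u_\ell^\star - u_\ell^{k,\star}} + \enorm{u_\ell^{k,\star} - u_\ell^{k,\j}}
 \le \qpic \, \enorm{u_\ell^\star - u_\ell^{k-1,\j}} + \frac{\qpcg}{1-\qpcg} \, \enorm{u_\ell^{k,\j} - u_\ell^{k,\j-1}}.
\end{align*}
Next, because $\j=\j(\ell,k)$, the algebraic stopping criterion~\eqref{eq:pcg:stopping} yields
\begin{align*}
 \enorm{u_\ell^{k,\j} - u_\ell^{k,\j-1}}
 \le \lpcg \, \eta_\ell(u_\ell^{k,\j}) + \lpcg \, \enorm{u_\ell^{k,\j} - u_\ell^{k-1,\j}};
\end{align*}
and since $k<\k(\ell)$, the linearization criterion~\eqref{eq:st_crit_pic} is not yet satisfied, so~\eqref{eq:picard:stopping_neg} provides the crucial reverse bound $\eta_\ell(u_\ell^{k,\j}) < \lpic^{-1}\enorm{u_\ell^{k,\j}-u_\ell^{k-1,\j}}$. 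Combining the two displays and using the triangle inequality $\enorm{u_\ell^{k,\j} - u_\ell^{k-1,\j}} \le \enorm{u_\ell^\star - u_\ell^{k,\j}} + \enorm{u_\ell^\star - u_\ell^{k-1,\j}}$ leads to
\begin{align*}
 \enorm{u_\ell^\star - u_\ell^{k,\j}}
 \le \qpic \, \enorm{u_\ell^\star - u_\ell^{k-1,\j}} + \frac{\qpcg}{1-\qpcg}(\lpcg + \lpcg/\lpic)\big[\enorm{u_\ell^\star - u_\ell^{k,\j}} + \enorm{u_\ell^\star - u_\ell^{k-1,\j}}\big].
\end{align*}

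The key step is then to choose $\lpcg^\star$ so small that $\beta:=\frac{\qpcg}{1-\qpcg}\lpcg^\star$ satisfies $\qpic+\beta<1-\beta$, i.e., $\lpcg^\star<\frac{(1-\qpic)(1-\qpcg)}{2\qpcg}$; any such $\lpcg^\star$ depends only on $\qpic$ and $\qpcg$ and ensures that the number $\qpic'$ defined in~\eqref{eq:qpic'} lies strictly between $0$ and $1$. Under the hypothesis $\lpcg+\lpcg/\lpic<\lpcg^\star$, absorbing the $\enorm{u_\ell^\star-u_\ell^{k,\j}}$ term on the left-hand side then yields exactly
\begin{align*}
 \enorm{u_\ell^\star - u_\ell^{k,\j}} \le \frac{\qpic + \frac{\qpcg}{1-\qpcg}(\lpcg+\lpcg/\lpic)}{1 - \frac{\qpcg}{1-\qpcg}(\lpcg+\lpcg/\lpic)} \, \enorm{u_\ell^\star - u_\ell^{k-1,\j}} \le \qpic' \, \enorm{u_\ell^\star - u_\ell^{k-1,\j}},
\end{align*}
monotonicity in $\lpcg+\lpcg/\lpic$ giving the last inequality.

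Finally,~\eqref{eq2:picard:contraction} follows from~\eqref{eq:picard:contraction} by two applications of the triangle inequality: the right inequality from $\enorm{u_\ell^{k,\j}-u_\ell^{k-1,\j}}\le \enorm{u_\ell^\star-u_\ell^{k,\j}}+\enorm{u_\ell^\star-u_\ell^{k-1,\j}}\le (1+\qpic')\,\enorm{u_\ell^\star-u_\ell^{k-1,\j}}$, and the left inequality from $\enorm{u_\ell^{k,\j}-u_\ell^{k-1,\j}}\ge \enorm{u_\ell^\star-u_\ell^{k-1,\j}}-\enorm{u_\ell^\star-u_\ell^{k,\j}}\ge ((\qpic')^{-1}-1)\,\enorm{u_\ell^\star-u_\ell^{k,\j}}$. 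The main (only) obstacle is identifying and carefully using the failed linearization stopping criterion to convert the estimator $\eta_\ell(u_\ell^{k,\j})$ into a multiple of $\enorm{u_\ell^{k,\j}-u_\ell^{k-1,\j}}$, which is what forces the smallness condition to be on the \emph{sum} $\lpcg+\lpcg/\lpic$ rather than on $\lpcg$ alone.
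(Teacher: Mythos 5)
Your proposal is correct and follows essentially the same path as the paper: split through $u_\ell^{k,\star}$, use the unperturbed contraction for the first term, then chain the algebraic contraction consequence, the algebraic stopping criterion, the failed linearization criterion (since $k<\k(\ell)$), and a triangle inequality to absorb $\enorm{u_\ell^\star - u_\ell^{k,\j}}$ on the left. The only cosmetic difference is that you make an explicit choice $\lpcg^\star < (1-\qpic)(1-\qpcg)/(2\qpcg)$ and name the monotonicity step, where the paper simply invokes ``sufficiently small''.
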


\begin{proof}
Clearly, \eqref{eq2:picard:contraction} follows from~\eqref{eq:picard:contraction} by the triangle inequality as in~\eqref{eq2:pcg:contraction} and~\eqref{eq2:picard:contraction0}. Moreover,  \eqref{eq:qpic'} is obvious for sufficiently small $\lpcg^\star$, since $\qpic = (1-\alpha^2/L^2)^{1/2} < 1$ from~\eqref{eq:picard:contraction0} and $0 < \qpcg < 1$ is fixed from~\eqref{eq:alg:contraction}. To see~\eqref{eq:picard:contraction}, first note that
\begin{align*}
 \enorm{u_\ell^\star - u_\ell^{k,\j}}
 \le \enorm{u_\ell^\star - u_\ell^{k,\star}} + \enorm{u_\ell^{k,\star} - u_\ell^{k,\j}}
 \reff{eq:picard:contraction0}\le \qpic \enorm{u_\ell^\star - u_\ell^{k-1,\j}} + \enorm{u_\ell^{k,\star} - u_\ell^{k,\j}},
\end{align*}
where the first term corresponds to the unperturbed Banach--Picard iteration~\eqref{eq:banach_non_pert} and the second to the algebraic error.
Second, note that, since $1 \le k < \k(\ell)$,
\begin{align*}
 &\enorm{u_\ell^{k,\star} - u_\ell^{k,\j}}
 		\reff{eq2:pcg:contraction}\le \frac{\qpcg}{1-\qpcg} \, \enorm{u_\ell^{k,\j} - u_\ell^{k,\j-1} }
 		\reff{eq:pcg:stopping}\le \frac{\qpcg}{1-\qpcg} \, \lpcg \big[\, \eta_\ell(u_\ell^{k,\j}) + \enorm{u_\ell^{k,\j} - u_\ell^{k-1,\j}} \,\big]
 \\& \quad
		\reffleft{eq:picard:stopping_neg}{<} \frac{\qpcg}{1-\qpcg} \, (\lpcg+\lpcg/\lpic) \, \enorm{u_\ell^{k,\j} - u_\ell^{k-1,\j}}
 \\& \quad
 		\le \frac{\qpcg}{1-\qpcg} \, (\lpcg+\lpcg/\lpic) \, \big[\, \enorm{u_\ell^\star - u_\ell^{k,\j}} + \enorm{u_\ell^\star - u_\ell^{k-1,\j}} \,\big].
\end{align*}
Combining the latter estimates with the assumption $\lpcg + \lpcg/\lpic < \lpcg^\star$, we see that
\begin{align*}
 \enorm{u_\ell^\star - u_\ell^{k,\j}}
 \le (\qpic + \frac{\qpcg}{1-\qpcg} \,\lpcg^\star) \, \enorm{u_\ell^\star - u_\ell^{k-1,\j}} + \frac{\qpcg}{1-\qpcg} \,\lpcg^\star \, \enorm{u_\ell^\star - u_\ell^{k,\j}}.
\end{align*}
If $0 < \lpcg^\star$ is sufficiently small, it follows that
\begin{align*}
 \enorm{u_\ell^\star - u_\ell^{k,\j}}
 \le \frac{\qpic + \frac{\qpcg}{1-\qpcg} \,\lpcg^\star}{1 - \frac{\qpcg}{1-\qpcg} \,\lpcg^\star } \, \enorm{u_\ell^\star - u_\ell^{k-1,\j}}
 = \qpic'\enorm{u_\ell^\star-u_\ell^{k-1,\j}}
 \quad \text{for all } 1 \le k < \k(\ell).
\end{align*}
This concludes the proof.
\end{proof}

\subsection{Proof of Proposition~\ref{proposition:reliability} (reliable error control in Algorithm~\ref{algorithm})}
\label{sec:proof_error_control}

We are now ready to prove the estimates~\eqref{eq:reliability}.

\begin{proof}[{\bfseries Proof of Proposition~\ref{proposition:reliability}}]
First, let $(\ell,k,j)\in\QQ$ with $0<k\le\k(\ell)$ and $0<j\le\j(\ell,k)$. Due to stability~\eqref{axiom:stability}, reliability~\eqref{axiom:reliability}, and the contraction properties~\eqref{eq2:pcg:contraction} resp.~\eqref{eq2:picard:contraction0}, it holds that
\begin{align}\label{eq:rel_aux}
\begin{split}
&\enorm{u^\star - u_\ell^{k,j}}
 		\le \enorm{u^\star - u_\ell^\star} + \enorm{u_\ell^\star - u_\ell^{k,j}}
 		\reff{axiom:reliability}\lesssim \eta_\ell(u_\ell^\star) + \enorm{u_\ell^\star - u_\ell^{k,j}}\\
& \qquad \reffleft{axiom:stability}\lesssim \eta_\ell(u_\ell^{k,j}) + \enorm{u_\ell^\star - u_\ell^{k,j}}
		\le \,\eta_\ell(u_\ell^{k,j})
		+ \enorm{u_\ell^\star - u_\ell^{k,\star}}
		+ \enorm{u_\ell^{k,\star} - u_\ell^{k,j}}\\
& \qquad \reffleft{eq2:picard:contraction0}{\lesssim}\eta_\ell(u_\ell^{k,j})
		+ \enorm{u_\ell^{k,\star}-u_\ell^{k-1,\j}}
		+ \enorm{u_\ell^{k,\star} - u_\ell^{k,j}}\\
& \qquad \le\,\eta_\ell(u_\ell^{k,j})
		+ \enorm{u_\ell^{k,j}-u_\ell^{k-1,\j}}
		+ 2\enorm{u_\ell^{k,\star} - u_\ell^{k,j}}\\
& \qquad \reffleft{eq2:pcg:contraction}{\lesssim}\,\eta_\ell(u_\ell^{k,j})
		+ \enorm{u_\ell^{k,j}-u_\ell^{k-1,\j}}
		+ \enorm{u_\ell^{k,j} - u_\ell^{k,j-1}}.
\end{split}
\end{align}
This proves~\eqref{eq:reliability} for the case $0<k\le\k(\ell)$ and $0<j\le\j(\ell,k)$.

If $j=\j(\ell,k)$, we can improve this estimate using the stopping criterion~\eqref{eq:pcg:stopping} which yields that
\begin{align}\label{eq:rel_aux3}
\enorm{u_\ell^{k,\j}-u_\ell^{k,\j-1}}  \reff{eq:pcg:stopping}{\lesssim} \eta_\ell(u_\ell^{k,\j}) + \enorm{u_\ell^{k,\j}-u_\ell^{k-1,\j}}.
\end{align}
Combined with~\eqref{eq:rel_aux}, this proves~\eqref{eq:reliability} for $j=\j(\ell,k)$. If additionally $k=\k(\ell)$, the stopping criterion~\eqref{eq:picard:stopping} and the previous estimate~\eqref{eq:rel_aux3} provide that
\begin{align}
\enorm{u_\ell^{\k,\j}-u_\ell^{\k,\j-1}}  \reff{eq:rel_aux3}{\lesssim} \eta_\ell(u_\ell^{\k,\j}) + \enorm{u_\ell^{\k,\j}-u_\ell^{\k-1,\j}}  \reff{eq:picard:stopping}{\lesssim}\eta_\ell(u_\ell^{\k,\j}),
\end{align}
which proves~\eqref{eq:reliability} for this case. Finally, for $k=0$, $\ell>0$ and hence $j=\j=0$, it directly follows from nested iteration~\eqref{eq:init:**} and the previous case $k=\k(\ell-1)$ resp. $j=\j(\ell-1,\k)$ that
\begin{align}
\enorm{u^\star-u_\ell^{0,0}}  =  \enorm{u^\star-u_{\ell-1}^{\k,\j}}  \lesssim  \eta_{\ell-1}(u_{\ell-1}^{\k,\j}).
\end{align}
This concludes the proof.
\end{proof}

\subsection{An auxiliary adaptive algorithm}

Due to Lemma~\ref{lemma:picard:contraction}, the iterates $u_\ell^{k,\j}$ are contractive in the index $k$. Consequently, Algorithm~\ref{algorithm} fits into the framework of~\cite{banach} upon defining $u_\ell$ from~\cite{banach} as $u_\ell := u_\ell^{\k,\j}$ for the case where $\k(\ell) < \infty$ and $\j(\ell,\k) < \infty$, i.e., both the algebraic and the linearization solvers are stopped by~\eqref{eq:st_crit_pcg}--\eqref{eq:st_crit_pic} on the mesh $\TT_\ell$. Note that the assumption $(\ell + n+1, 0,0)\in\QQ$ below ensures this for all meshes $\TT_{\ell'}$ with $0\le \ell' \le \ell+n$. Then, we can rewrite~\cite[Lemma~4.9, eq.~(4.10)]{banach} and \cite[Theorem~5.3, eq.~(5.5)]{banach} in the current setting to conclude two important properties: First, the estimators $\eta_\ell(u_\ell^{\k,\j})$ available at step (iv) of Algorithm~\ref{algorithm} are, up to a constant, equivalent to the estimators $\eta_\ell(u_\ell^\star)$ corresponding to the unavailable exact linearization $u_\ell^\star$ of~\eqref{eq:exact_solution_ell}. And second, the estimators $\eta_\ell(u_\ell^{\k,\j})$ are linearly convergent.

\begin{lemma}[{{\cite[Lemma~4.9, Theorem~5.3]{banach}}}]\label{lemma:banach2:equi_linconv}
Recall $\lpcg^\star>0$ and $0<\qpic'<1$ from Lemma~\ref{lemma:picard:contraction}. Define $\lpic^\star:= \frac{1-\qpic'}{\qpic'\Cstab}>0$ and note that it  depends only on $\qpic$, $\qpcg$, and $\Cstab$. Then, for all $0<\theta\le 1$, all $0<\lpcg<1$ and $0<\lpic$ with $0<\lpcg + \lpcg/\lpic<\lpcg^\star$ and $0 < \lpic/\theta < \lpic^\star$, and all $(\ell,\k,\j) \in \QQ$ with $\k < \infty$ and $\j < \infty$, it holds that
\begin{align}\label{eq:eta-star}
 (1 - \lpic/\lpic^\star) \, \eta_\ell(u_\ell^{\k,\j})
 \le \eta_\ell(u_\ell^\star)
 \le (1 + \lpic/\lpic^\star) \, \eta_\ell(u_\ell^{\k,\j}).
\end{align}
Moreover, there exist $\Cghps > 0$ and $0 < \qghps < 1$ such that
\begin{align}\label{eq:linearconvergence}
 \eta_{\ell + n}(u_{\ell+n}^{\k,\j})
 \le \Cghps \, \qghps^n \, \eta_\ell(u_\ell^{\k,\j})
 \quad \text{for all } (\ell + n+1, 0,0)\in\QQ.
\end{align}
The constants $\Cghps$ and $\qghps$ depend only on $L$, $\alpha$, $\Crel$, $\Cstab$, $\qred$, $\qpcg$, and $\qpic$, as well as on the adaptivity parameters $\theta$, $\lpcg$, and $\lpic$. \qed
\end{lemma}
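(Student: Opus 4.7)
The plan is to reduce the analysis to the framework of~\cite{banach} by identifying the stopped iterates $u_\ell:=u_\ell^{\k,\j}$ (which are well-defined on every mesh $\TT_\ell$ by the assumption $\k<\infty$, $\j<\infty$), and then to invoke the arguments of \cite[Lemma~4.9, Theorem~5.3]{banach}. The key observation is that Lemma~\ref{lemma:picard:contraction} ensures that the \emph{perturbed} Banach--Picard iteration is still a contraction with factor $\qpic'\in(0,1)$, so that $u_\ell^{\k,\j}$ plays exactly the role of the inexact Picard iterate from \cite{banach} (where the inner algebraic solve was assumed exact).

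For the equivalence~\eqref{eq:eta-star}, I would chain three ingredients. Stability~\eqref{axiom:stability} gives
\begin{align*}
|\eta_\ell(u_\ell^\star)-\eta_\ell(u_\ell^{\k,\j})|\le\Cstab\,\enorm{u_\ell^\star-u_\ell^{\k,\j}};
\end{align*}
the left inequality of~\eqref{eq2:picard:contraction} yields $\enorm{u_\ell^\star-u_\ell^{\k,\j}}\le\tfrac{\qpic'}{1-\qpic'}\enorm{u_\ell^{\k,\j}-u_\ell^{\k-1,\j}}$; and the Picard stopping criterion~\eqref{eq:picard:stopping} gives $\enorm{u_\ell^{\k,\j}-u_\ell^{\k-1,\j}}\le\lpic\,\eta_\ell(u_\ell^{\k,\j})$. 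Combining the three and using the definition $\lpic^\star=(1-\qpic')/(\qpic'\Cstab)$ yields $|\eta_\ell(u_\ell^\star)-\eta_\ell(u_\ell^{\k,\j})|\le(\lpic/\lpic^\star)\,\eta_\ell(u_\ell^{\k,\j})$, whence~\eqref{eq:eta-star} by the reverse triangle inequality. Observe that $\lpic/\lpic^\star<1$ thanks to the hypothesis $\lpic/\theta<\lpic^\star$ (since $\theta\le 1$), so the lower bound in~\eqref{eq:eta-star} is non-trivial.

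For the linear convergence~\eqref{eq:linearconvergence}, I would first establish a standard estimator/energy-error contraction for the \emph{exact} discrete solutions $u_\ell^\star$ across two consecutive meshes. This is the classical AFEM argument of~\cite{axioms}: using stability~\eqref{axiom:stability}, reduction~\eqref{axiom:reduction}, and discrete reliability~\eqref{axiom:discrete_reliability}, one shows that D\"orfler marking with some parameter $\theta''>0$ on the set $\MM_\ell$ w.r.t.\ $\eta_\ell(u_\ell^\star)$ implies, for suitable $\gamma>0$ and $0<\qctr<1$,
\begin{align*}
\eta_{\ell+1}(u_{\ell+1}^\star)^2+\gamma\,\enorm{u^\star-u_{\ell+1}^\star}^2
\le\qctr^2\,\big[\,\eta_\ell(u_\ell^\star)^2+\gamma\,\enorm{u^\star-u_\ell^\star}^2\,\big].
\end{align*}
The delicate point is that Algorithm~\ref{algorithm} applies D\"orfler marking with parameter $\theta$ to $\eta_\ell(u_\ell^{\k,\j})$ rather than to $\eta_\ell(u_\ell^\star)$. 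Using~\eqref{eq:eta-star} globally and on $\MM_\ell$ (this step being essentially a perturbation of the marking), the criterion $\theta\,\eta_\ell(u_\ell^{\k,\j})\le\eta_\ell(\MM_\ell,u_\ell^{\k,\j})$ is transferred to $\theta''\,\eta_\ell(u_\ell^\star)\le\eta_\ell(\MM_\ell,u_\ell^\star)$ with an effective parameter $\theta''=\theta(1-\lpic/\lpic^\star)/(1+\lpic/\lpic^\star)-\mathcal{O}(\lpic/\lpic^\star)>0$, where positivity of $\theta''$ is exactly what the hypothesis $\lpic/\theta<\lpic^\star$ buys us. Telescoping the resulting contraction and translating back from $\eta_\ell(u_\ell^\star)$ to $\eta_\ell(u_\ell^{\k,\j})$ via~\eqref{eq:eta-star} provides~\eqref{eq:linearconvergence} with constants $\Cghps$, $\qghps$ depending only on the stated quantities.

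The main obstacle is precisely the transfer of the D\"orfler property between the computed and the unavailable exact-linearization estimators with a strictly positive effective parameter; this is what forces the restriction $0<\lpic/\theta<\lpic^\star$. Once this is done, all remaining pieces (C\'ea-type quasi-optimality of $u_\ell^\star$ from~\eqref{eq:cea}, the estimator axioms, and the contraction of Lemma~\ref{lemma:picard:contraction}) are just plugged into the machinery of~\cite{banach}.
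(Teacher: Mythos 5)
Your proposal is correct and follows essentially the same route as the paper: the paper gives no standalone proof but, exactly as you do, identifies $u_\ell:=u_\ell^{\k,\j}$, invokes the perturbed contraction of Lemma~\ref{lemma:picard:contraction} to see that Algorithm~\ref{algorithm} fits the framework of~\cite{banach}, and then imports \cite[Lemma~4.9, Theorem~5.3]{banach} (your sketches of the estimator equivalence via stability plus stopping criterion, and of the linear convergence via the perturbed D\"orfler marking with effective parameter made positive by $\lpic/\theta<\lpic^\star$, are precisely the arguments of the cited results). The only point to watch is that \eqref{eq2:picard:contraction} is established in Lemma~\ref{lemma:picard:contraction} for $1\le k<\k(\ell)$, whereas \eqref{eq:eta-star} is needed at $k=\k(\ell)$; the required bound $\enorm{u_\ell^\star-u_\ell^{\k,\j}}\le\frac{\qpic'}{1-\qpic'}\,\lpic\,\eta_\ell(u_\ell^{\k,\j})$ nevertheless holds, since combining \eqref{eq:picard:contraction0}, \eqref{eq2:pcg:contraction}, and both stopping criteria \eqref{eq:pcg:stopping} and \eqref{eq:picard:stopping} with $\lpcg+\lpcg/\lpic<\lpcg^\star$ yields the factor $\frac{\qpic+\frac{\qpcg}{1-\qpcg}\lpcg^\star}{1-\qpic}\,\lpic$, which is bounded by $\frac{\qpic'}{1-\qpic'}\,\lpic$.
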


As a result of Lemma~\ref{lemma:banach2:equi_linconv} and Proposition~\ref{proposition:reliability}, we get the following lemma. Please note that when $\underline\ell < \infty$, the summation below only goes to $\underline\ell - 1$, as the arguments rely on~\eqref{eq:linearconvergence} which needs finite stopping indices $\k$ and $\j$ on each mesh $\TT_\ell$.

\begin{lemma}\label{lemma:banach2}
Suppose that $0<\lpcg + \lpcg/\lpic<\lpcg^\star$ (from Lemma~\ref{lemma:picard:contraction}) as well as $0<\theta\le 1$ and $0<\lpic/\theta<\lpic^\star$ (from Lemma~\ref{lemma:banach2:equi_linconv}). With the convention $\underline\ell-1=\infty$ if $\underline\ell=\infty$, there holds summability
\begin{align}\label{eq:banach2}
\sum_{\ell=\ell'+1}^{\underline\ell-1} \Delta_\ell^{\k,\j}
		\le C \, \Delta_{\ell'}^{\k,\j}
		\quad\text{for all } (\ell', \k, \j) \in \QQ,
\end{align}
where $C > 0$ depends only on $L$, $\alpha$, $\Crel$, $\Cstab$, $\qred$, $\theta$, $\qpcg$, $\qpic$, $\lpcg$, and $\lpic$.
\end{lemma}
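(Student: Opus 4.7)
The plan is to reduce the summability of the quasi-errors $\Delta_\ell^{\k,\j}$ to the already established linear convergence of the estimators $\eta_\ell(u_\ell^{\k,\j})$ from Lemma~\ref{lemma:banach2:equi_linconv}. The key observation is that, at a \emph{stopped} state $(\ell,\k,\j) \in \QQ$ with $\ell < \underline\ell$ (so that by definition $\k = \k(\ell) < \infty$ and $\j = \j(\ell,\k) < \infty$, and the stopping criteria~\eqref{eq:pcg:stopping} and~\eqref{eq:picard:stopping} are available), each of the three constituents of $\Delta_\ell^{\k,\j}$ can be controlled by the single quantity $\eta_\ell(u_\ell^{\k,\j})$.

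First, I would bound the three summands of $\Delta_\ell^{\k,\j} = \enorm{u^\star - u_\ell^{\k,\j}} + \enorm{u_\ell^{\k,\star} - u_\ell^{\k,\j}} + \eta_\ell(u_\ell^{\k,\j})$ individually. The overall error is bounded by the third case of Proposition~\ref{proposition:reliability}, giving $\enorm{u^\star - u_\ell^{\k,\j}} \lesssim \eta_\ell(u_\ell^{\k,\j})$. The algebraic error is handled by combining the algebraic contraction~\eqref{eq2:pcg:contraction} with the active stopping criterion~\eqref{eq:pcg:stopping} and then the linearization stopping~\eqref{eq:picard:stopping}:
\begin{align*}
\enorm{u_\ell^{\k,\star} - u_\ell^{\k,\j}}
\;\reff{eq2:pcg:contraction}{\le}\; \tfrac{\qpcg}{1-\qpcg}\enorm{u_\ell^{\k,\j}-u_\ell^{\k,\j-1}}
\;\reff{eq:pcg:stopping}{\le}\; \tfrac{\qpcg\,\lpcg}{1-\qpcg}\bigl[\eta_\ell(u_\ell^{\k,\j})+\enorm{u_\ell^{\k,\j}-u_\ell^{\k-1,\j}}\bigr]
\;\reff{eq:picard:stopping}{\lesssim}\; \eta_\ell(u_\ell^{\k,\j}).
\end{align*}
Together with the trivial third contribution, this yields
\begin{equation*}
\Delta_\ell^{\k,\j} \lesssim \eta_\ell(u_\ell^{\k,\j})
\quad \text{for every } (\ell,\k,\j)\in\QQ \text{ with } \ell < \underline\ell,
\end{equation*}
with the hidden constant depending only on the quantities listed in the lemma.

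Second, I would invoke the linear convergence of the estimator sequence from Lemma~\ref{lemma:banach2:equi_linconv}. Since $(\ell',\k,\j)\in\QQ$ and $\ell < \underline\ell$ for each index $\ell\in\{\ell'+1,\dots,\underline\ell-1\}$ (with the convention $\underline\ell - 1 = \infty$ if $\underline\ell = \infty$), the hypothesis $(\ell+1,0,0)\in\QQ$ of~\eqref{eq:linearconvergence} is satisfied. Hence, writing $\ell = \ell' + n$,
\begin{equation*}
\sum_{\ell=\ell'+1}^{\underline\ell-1} \Delta_\ell^{\k,\j}
\;\lesssim\; \sum_{\ell=\ell'+1}^{\underline\ell-1} \eta_\ell(u_\ell^{\k,\j})
\;\reff{eq:linearconvergence}{\le}\; \Cghps\,\eta_{\ell'}(u_{\ell'}^{\k,\j})\sum_{n=1}^{\infty}\qghps^{n}
\;\lesssim\; \eta_{\ell'}(u_{\ell'}^{\k,\j})
\;\le\; \Delta_{\ell'}^{\k,\j},
\end{equation*}
since the last estimator term is by definition one of the three nonnegative summands of $\Delta_{\ell'}^{\k,\j}$.

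The only subtlety is bookkeeping of the degenerate situations: if $\ell' \ge \underline\ell - 1$ (in particular when $\underline\ell < \infty$ and $\ell' \in\{\underline\ell -1,\underline\ell\}$), then the sum on the left-hand side is empty and the statement is trivial; otherwise $\ell' < \underline\ell - 1$ ensures that $(\ell',\k,\j)$ itself is a stopped state and the same bound $\Delta_{\ell'}^{\k,\j} \lesssim \eta_{\ell'}(u_{\ell'}^{\k,\j})$ holds (though it is not needed in the chain above). The main \emph{conceptual} step is the first one (reducing $\Delta_\ell^{\k,\j}$ to $\eta_\ell(u_\ell^{\k,\j})$ at stopped states), which is really a direct chaining of the stopping criteria and Proposition~\ref{proposition:reliability}; the rest is a geometric summation enabled by Lemma~\ref{lemma:banach2:equi_linconv}, so no essential obstacle is expected.
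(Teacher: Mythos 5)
Your proposal is correct and follows essentially the same route as the paper: the paper likewise bounds the quasi-error at stopped states by the estimator (introducing the intermediate quantity $\widetilde\Delta_\ell^k$ and using Proposition~\ref{proposition:reliability} together with the chain \eqref{eq2:pcg:contraction}, \eqref{eq:pcg:stopping}, \eqref{eq:picard:stopping} for the algebraic term) and then sums the geometric series provided by the linear convergence \eqref{eq:linearconvergence} of Lemma~\ref{lemma:banach2:equi_linconv}. Your direct termwise bound $\Delta_\ell^{\k,\j}\lesssim\eta_\ell(u_\ell^{\k,\j})$ is just a streamlined version of the paper's equivalence $\Delta_{\ell}^{\k,\j}\simeq\widetilde\Delta_{\ell}^{\k}$, so no substantive difference remains.
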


\begin{proof}
Define $\widetilde\Delta_\ell^k := \enorm{u^\star-u_\ell^{k,\j}} + \eta_\ell(u_\ell^{k,\j})$ as the sum of overall error plus error estimator. In comparison with~\eqref{eq:def:Delta}, $\widetilde\Delta_\ell^k$ omits the algebraic error term.
With Proposition~\ref{proposition:reliability} and the linear convergence~\eqref{eq:linearconvergence}, we get that
\begin{align*}
 \sum_{\ell=\ell'+1}^{\underline\ell-1} \widetilde\Delta_\ell^\k
		\reff{eq:reliability}\lesssim \sum_{\ell=\ell'+1}^{\underline\ell-1} \eta_\ell(u_\ell^{\k,\j})
		\reff{eq:linearconvergence}\lesssim \eta_{\ell'}(u_{\ell'}^{\k,\j})\sum_{\ell=\ell'+1}^{\underline\ell-1}\qghps^{\ell-\ell'}
 \lesssim \widetilde\Delta_{\ell'}^\k.
\end{align*}
Hence, it only remains to prove that
\begin{align}\label{eq:banach2_aux1}
 \Delta_{\ell'}^{\k,\j} \simeq \widetilde\Delta_{\ell'}^{\k} \quad \text{for all } (\ell', \k,\j) \in \QQ.
\end{align}
By definition~\eqref{eq:def:Delta}, it holds that
\begin{align*}
\Delta_{\ell'}^{k,\j}
 		= \enorm{u^\star - u_{\ell'}^{k,\j}} + \enorm{u_{\ell'}^{k,\star} - u_{\ell'}^{k,\j}} + \eta_{\ell'}(u_{\ell'}^{k,\j})
 		= \widetilde\Delta_{\ell'}^k + \enorm{u_{\ell'}^{k,\star} - u_{\ell'}^{k,\j}}.
\end{align*}
Hence, it only remains to show that $\enorm{u_{\ell'}^{\k,\star} - u_{\ell'}^{\k,\j}} \lesssim \widetilde\Delta_{\ell'}^{\k}$.
To this end, note that
\begin{align*}
 \enorm{u_{\ell'}^{\k,\star} - u_{\ell'}^{\k,\j}}
 		\reff{eq2:pcg:contraction}\lesssim  \enorm{u_{\ell'}^{\k,\j} - u_{\ell'}^{\k,\j-1}}
 		\reff{eq:pcg:stopping}\lesssim \eta_{\ell'}(u_{\ell'}^{\k,\j}) + \enorm{u_{\ell'}^{\k,\j}-u_{\ell'}^{\k-1,\j}}
 		\reff{eq:picard:stopping}\lesssim \eta_{\ell'}(u_{\ell'}^{\k,\j}) \le \widetilde\Delta_{\ell'}^{\k}.
\end{align*}
This proves \eqref{eq:banach2_aux1} and concludes the proof.
\end{proof}

\section{Proof of Theorem~\ref{theorem:linconv} (linear convergence)}
\label{section:lin_cvg}

This section is dedicated to the proof of Theorem~\ref{theorem:linconv}. The core is the following lemma that extends Lemma~\ref{lemma:banach2} to our setting with the triple indices.

\begin{lemma}\label{lemma:ell-k-j}
Suppose that $0<\lpcg + \lpcg/\lpic<\lpcg^\star$ (from Lemma~\ref{lemma:picard:contraction}) as well as $0<\theta\le 1$ and $0<\lpic/\theta<\lpic^\star$ (from Lemma~\ref{lemma:banach2:equi_linconv}). Then, there exists $\Csum > 0$ such that
\begin{align} \label{eq_ell-k-j}
 \sum_{\substack{(\ell,k,j)\in\QQ \\ (\ell,k,j) > (\ell',k',j')}} \Delta_\ell^{k,j}
 \le \Csum \, \Delta_{\ell'}^{k',j'}
 \quad \text{for all } (\ell',k',j') \in \QQ.
\end{align}
The constant $\Csum$ depends only on $\Crel$, $\Cstab$, $\qred$, $\theta$, $\qpcg$, $\lpcg$, $\qpic$, $\lpic$, $\alpha$, and $L$.
\end{lemma}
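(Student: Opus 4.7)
My plan is to sum the quasi-errors by unrolling the three nested loops of Algorithm~\ref{algorithm} into three genuine geometric series, using in turn the algebraic contraction~\eqref{eq:pcg:contraction}, the perturbed Banach--Picard contraction of Lemma~\ref{lemma:picard:contraction}, and the mesh-level summability of Lemma~\ref{lemma:banach2}. A first reduction, essentially a relabelling, is to assume that $(k',j')$ is already the last admissible pair on mesh $\TT_{\ell'}$, so that $\Delta_{\ell'}^{k',j'} = \Delta_{\ell'}^{\k,\j}$; the partial-sum case at $\ell=\ell'$ is controlled by the same arguments below, applied to the tail indices strictly after $(k',j')$.

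For the innermost sum in $j$ at fixed $\ell$ and $k$, the crucial observation is that for every $j$ strictly less than the stopping index $\j(\ell,k)$, the \emph{negated} algebraic stopping criterion~\eqref{eq:pcg:stopping_neg} bounds both $\eta_\ell(u_\ell^{k,j})$ and $\enorm{u_\ell^{k,j}-u_\ell^{k-1,\j}}$ by $\lpcg^{-1}\enorm{u_\ell^{k,j}-u_\ell^{k,j-1}}$. Injected into the first case of Proposition~\ref{proposition:reliability} and combined with the reliability~\eqref{eq2:pcg:contraction} of the algebraic increment, this yields $\Delta_\ell^{k,j} \lesssim \enorm{u_\ell^{k,j}-u_\ell^{k,j-1}}$ for $1\le j<\j(\ell,k)$. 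The algebraic contraction~\eqref{eq:pcg:contraction} then turns $\sum_j \enorm{u_\ell^{k,j}-u_\ell^{k,j-1}}$ into an honest geometric series in $j$ with ratio $\qpcg$, summing up to $\lesssim \enorm{u_\ell^{k,\star}-u_\ell^{k,0}}$.

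Next, at the stopping index $j=\j(\ell,k)$ with $k<\k(\ell)$, the second case of Proposition~\ref{proposition:reliability} combined with the negated linearization stopping criterion~\eqref{eq:picard:stopping_neg} yields $\Delta_\ell^{k,\j} \lesssim \enorm{u_\ell^{k,\j}-u_\ell^{k-1,\j}}$. Using $u_\ell^{k,0}=u_\ell^{k-1,\j}$ together with the one-step non-perturbed contraction~\eqref{eq:picard:contraction0} to bound $\enorm{u_\ell^{k,\star}-u_\ell^{k,0}} \le (1+\qpic)\enorm{u_\ell^\star-u_\ell^{k-1,\j}}$, the previous step is promoted to $\sum_{j=1}^{\j(\ell,k)}\Delta_\ell^{k,j} \lesssim \enorm{u_\ell^\star-u_\ell^{k-1,\j}}$ for $k<\k(\ell)$. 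Lemma~\ref{lemma:picard:contraction} then provides geometric decay in $k$ with ratio $\qpic'<1$, and summing over $k$ together with the remaining $k=\k(\ell)$ term---for which the third case of Proposition~\ref{proposition:reliability} gives $\Delta_\ell^{\k,\j} \lesssim \eta_\ell(u_\ell^{\k,\j})$---produces the single-mesh estimate
\[
\sum_{k=1}^{\k(\ell)}\sum_{j=1}^{\j(\ell,k)}\Delta_\ell^{k,j} \lesssim \enorm{u_\ell^\star-u_\ell^{0,\j}} + \eta_\ell(u_\ell^{\k,\j}) \lesssim \Delta_\ell^{\k,\j},
\]
where the last bound re-exploits the equivalence $\enorm{u_\ell^{\k,\star}-u_\ell^{\k,\j}} \lesssim \eta_\ell(u_\ell^{\k,\j})$ already established in the proof of Lemma~\ref{lemma:banach2}. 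A concluding application of Lemma~\ref{lemma:banach2} to $\sum_{\ell>\ell'}\Delta_\ell^{\k,\j} \lesssim \Delta_{\ell'}^{\k,\j}$ then closes the argument.

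The main obstacle will be that neither $\k(\ell)$ nor $\j(\ell,k)$ is bounded a priori, so any naive estimate of the form $\Delta_\ell^{k,j} \lesssim (\text{mesh-level quantity})$ summed over $(k,j)$ would produce a divergent series. The mechanism that rescues the argument is precisely that the \emph{negated} stopping criteria~\eqref{eq:pcg:stopping_neg} and~\eqref{eq:picard:stopping_neg} hold for every pre-stopping iterate and convert mesh-level quantities into algebraic and linearization increments, which by the two contractions with ratios $\qpcg$ and $\qpic'$ decay geometrically. This is what transforms the three nested loops into three genuine geometric series and, combined with Lemma~\ref{lemma:banach2}, delivers the bound~\eqref{eq_ell-k-j}.
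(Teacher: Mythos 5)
Your overall strategy is the same as the paper's: the negated stopping criteria \eqref{eq:pcg:stopping_neg} and \eqref{eq:picard:stopping_neg} convert pre-stopping quasi-errors into solver increments, the two contractions produce geometric series in $j$ and $k$, and Lemma~\ref{lemma:banach2} handles the sum over $\ell$. Up to the bound $\sum_{k}\sum_{j}\Delta_\ell^{k,j}\lesssim\enorm{u_\ell^\star-u_\ell^{0,\j}}+\eta_\ell(u_\ell^{\k,\j})$ your single-mesh argument is sound, but the final inequality $\enorm{u_\ell^\star-u_\ell^{0,\j}}+\eta_\ell(u_\ell^{\k,\j})\lesssim\Delta_\ell^{\k,\j}$ is a genuine gap: the equivalence $\enorm{u_\ell^{\k,\star}-u_\ell^{\k,\j}}\lesssim\eta_\ell(u_\ell^{\k,\j})$ that you invoke only controls the algebraic component and says nothing about the initial-guess error $\enorm{u_\ell^\star-u_\ell^{0,\j}}=\enorm{u_\ell^\star-u_{\ell-1}^{\k,\j}}$. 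By the C\'ea lemma \eqref{eq:cea} and Proposition~\ref{proposition:reliability}, that term is controlled by $\eta_{\ell-1}(u_{\ell-1}^{\k,\j})\simeq\Delta_{\ell-1}^{\k,\j}$, i.e.\ by the \emph{previous} mesh, and the axioms \eqref{axiom:stability}--\eqref{axiom:discrete_reliability} contain no lower bound preventing $\eta_\ell(u_\ell^{\k,\j})$ from being arbitrarily smaller than $\eta_{\ell-1}(u_{\ell-1}^{\k,\j})$; so the claimed bound by the \emph{same-mesh} quantity $\Delta_\ell^{\k,\j}$ cannot be derived. The repair is what the paper does via \eqref{eq3:ell-k-j} in the chain \eqref{eq:sums_aux1}: bound the per-mesh sum by $\Delta_{\ell-1}^{\k,\j}+\Delta_\ell^{\k,\j}$ and absorb the shifted index using the summability \eqref{eq:banach2}.

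A second, related gap is the opening reduction: assuming $(k',j')$ is the stopping pair is not ``essentially a relabelling''. When $j'<\j(\ell',k')$ or $k'<\k(\ell')$, summing the tail leaves you with quantities at the stopping indices, e.g.\ $\enorm{u_{\ell'}^\star-u_{\ell'}^{k',\j}}$ from the $k$-sum, $\Delta_{\ell'}^{k',\j}$, and $\eta_{\ell'}(u_{\ell'}^{\k,\j})$, and these must be compared back to $\Delta_{\ell'}^{k',j'}$. That comparison is exactly the content of the paper's estimates \eqref{eq6:ell-k-j} and \eqref{eq5:ell-k-j}, which require separate arguments (a triangle-inequality step at $j=\j-1$, resp.\ $k=\k-1$, followed by monotonicity of the algebraic resp.\ linearization error along the contraction, all phrased for the auxiliary quantity $\Alpha_\ell^{k,j}\simeq\Delta_\ell^{k,j}$ obtained via C\'ea). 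With only the tools you list, the stopping-index term is bounded as $\Delta_{\ell'}^{k',\j}\lesssim\enorm{u_{\ell'}^\star-u_{\ell'}^{k'-1,\j}}$, an iterate \emph{earlier} than $(k',j')$ that is not controlled by $\Delta_{\ell'}^{k',j'}$ without these extra estimates. Finally, the non-generic runs ($\k(\underline\ell)=\infty$, $\j(\underline\ell,\k)=\infty$, or termination with $\eta_{\underline\ell}(u_{\underline\ell}^{\k,\j})=0$) need the case distinctions of the paper's Steps~5--10; your geometric-series machinery does extend to them, but the sketch should address this explicitly.
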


\begin{proof}
{\bf Step~1.}
We prove that
\begin{align}\label{eq:Alpha-Delta}
 \boxed{\Alpha_\ell^{k,j}
 := \enorm{u_\ell^\star - u_\ell^{k,j}} + \enorm{u_\ell^{k,\star} - u_\ell^{k,j}} + \eta_\ell(u_\ell^{k,j})
 \simeq \Delta_\ell^{k,j}
 \quad \text{for all } (\ell, k, j) \in \QQ.}
\end{align}
Note that $\Alpha_\ell^{k,j}$ and $\Delta_\ell^{k,j}$ only differ in the first term, where the overall error is replaced by the (inexact) linearization error.
According to the C\'ea lemma~\eqref{eq:cea}, it holds that
\begin{align*}
 \enorm{u_\ell^\star - u_\ell^{k,j}}
 \le \enorm{u^\star - u_\ell^{k,j}} + \enorm{u^\star - u_\ell^\star}
 \reff{eq:cea}\lesssim \enorm{u^\star - u_\ell^{k,j}} \le \Delta_\ell^{k,j}.
\end{align*}
This implies that $\Alpha_\ell^{k,j} \lesssim \Delta_\ell^{k,j}$. To see the converse inequality, note that
\begin{align*}
 \enorm{u^\star - u_\ell^{k,j}}
 \le \enorm{u^\star - u_\ell^\star} + \enorm{u_\ell^\star - u_\ell^{k,j}}
 &\reff{axiom:reliability}\lesssim \eta_\ell(u_\ell^\star) + \enorm{u_\ell^\star - u_\ell^{k,j}}
 \\&
 \reff{axiom:stability}\lesssim \eta_\ell(u_\ell^{k,j}) + \enorm{u_\ell^\star - u_\ell^{k,j}}
 \le \Alpha_\ell^{k,j}.
\end{align*}
This proves $\Delta_\ell^{k,j} \lesssim \Alpha_\ell^{k,j}$ and concludes this step.

{\bf Step~2.} We prove some auxiliary estimates.
First, we prove that the algebraic error $\enorm{u_\ell^{k,\star} - u_\ell^{k,j-1}}$ dominates the modified total error $\Alpha_\ell^{k,j}$, before the algebraic stopping criterion~\eqref{eq:st_crit_pcg} is reached, i.e.,
\begin{align}\label{eq1:ell-k-j}
 \boxed{ \Alpha_\ell^{k,j} \lesssim \enorm{u_\ell^{k,\star} - u_\ell^{k,j-1}}
 \quad \text{for all }
 (\ell,k,j) \in \QQ \text{ with } k\ge 1 \text{ and } 1 \le j < \j(\ell,k).}
\end{align}

To this end, note that
\begin{align*}
 \enorm{u_\ell^{\star} - u_\ell^{k,j}}
 \le \enorm{u_\ell^{\star} - u_\ell^{k,\star}} + \enorm{u_\ell^{k,\star} - u_\ell^{k,j}}
 \reff{eq2:picard:contraction0}\lesssim \enorm{u_\ell^{k,\star} - u_\ell^{k-1,\j}} + \enorm{u_\ell^{k,\star} - u_\ell^{k,j}}.
\end{align*}
Since $1 \le j < \j(\ell,k)$, we obtain that
\begin{align*}
 \Alpha_\ell^{k,j}
 &= \enorm{u_\ell^{\star} - u_\ell^{k,j}} + \enorm{u_\ell^{k,\star} - u_\ell^{k,j}} + \eta_\ell(u_\ell^{k,j})
\\&\lesssim \enorm{u_\ell^{k,\star} - u_\ell^{k-1,\j}} + \enorm{u_\ell^{k,\star} - u_\ell^{k,j}} + \eta_\ell(u_\ell^{k,j})
 \\&
 \le 2 \, \enorm{u_\ell^{k,\star} - u_\ell^{k,j}} + \enorm{u_\ell^{k,j} - u_\ell^{k-1,\j}} + \eta_\ell(u_\ell^{k,j})
 \\&
 \reffleft{eq2:pcg:contraction} \lesssim \enorm{u_\ell^{k,j} - u_\ell^{k,j-1}} + \enorm{u_\ell^{k,j} - u_\ell^{k-1,\j}} + \eta_\ell(u_\ell^{k,j})
 \\&
 \reffleft{eq:pcg:stopping_neg}\lesssim \enorm{u_\ell^{k,j} - u_\ell^{k,j-1}}
		\reff{eq2:pcg:contraction}\lesssim \enorm{u_\ell^{k,\star} - u_\ell^{k,j-1}}.
\end{align*}
This proves~\eqref{eq1:ell-k-j}.

Second, we consider the use of nested iteration when passing to the next perturbed Banach--Picard step. We prove that
\begin{align}\label{eq2:ell-k-j}
 \boxed{ \enorm{u_\ell^{k,\star} - u_\ell^{k,0}} \lesssim \Alpha_\ell^{k-1,\j}
 \quad \text{for all }
 (\ell, k, 0) \in \QQ \text{ with } k \ge 1,}
\end{align}
To this end, note that
\begin{align*}
 \enorm{u_\ell^{k,\star} - u_\ell^{k,0}}
\reff{eq:init:*}= \enorm{u_\ell^{k,\star} - u_\ell^{k-1,\j}}
 \reff{eq2:picard:contraction0}\lesssim \enorm{u_\ell^{\star} - u_\ell^{k-1,\j}}
 \le \Alpha_\ell^{k-1,\j}.
\end{align*}
This proves~\eqref{eq2:ell-k-j}.

Third, we prove that
\begin{align}\label{eq6:ell-k-j}
 \boxed{ \Alpha_\ell^{k,\j} \lesssim \Alpha_\ell^{k,j}
 \quad \text{for all }
 (\ell,k,j) \in \QQ,}
\end{align}
related to the algebraic error contraction. Note that $k=0$ implies $\j=0$, so that~\eqref{eq6:ell-k-j} trivially holds for $k = 0$ in the form of equality. Let now $k \geq 1$.
We first consider the last but one algebraic iteration step $j = \j(\ell,k) - 1 \geq 0$. There holds that
\begin{align*}
 \Alpha_\ell^{k,\j}
 &= \enorm{u_\ell^{\star} - u_\ell^{k,\j}} + \enorm{u_\ell^{k,\star} - u_\ell^{k,\j}} + \eta_\ell(u_\ell^{k,\j})
 \\&
 \le \enorm{u_\ell^{\star} - u_\ell^{k,\j-1} } + \enorm{u_\ell^{k,\star} - u_\ell^{k,\j-1} } + \eta_\ell(u_\ell^{k,\j}) + 2 \, \enorm{u_\ell^{k,\j} - u_\ell^{k,\j-1} }
 \\&
 \reffleft{axiom:stability}\lesssim \Alpha_\ell^{k,\j-1} + \enorm{u_\ell^{k,\j} - u_\ell^{k,\j-1} }
 		\reff{eq2:pcg:contraction}\lesssim \Alpha_\ell^{k,\j-1} + \enorm{u_\ell^{k,\star} - u_\ell^{k,\j-1} }
\simeq \Alpha_\ell^{k,\j-1}.
\end{align*}
This proves~\eqref{eq6:ell-k-j} for $j = \j(\ell,k) -1 \geq 0$. Note that this argument also applies when $\j=1$.
If $0 \le j \le \j(\ell,k) - 2$, then
\begin{align*}
\Alpha_\ell^{k,\j} \lesssim \Alpha_\ell^{k,\j-1}
 \reff{eq1:ell-k-j}\lesssim  \enorm{u_\ell^{k,\star} - u_\ell^{k,\j-2}}
 \reff{eq:pcg:contraction}\le \enorm{u_\ell^{k,\star} - u_\ell^{k,j}}
 \le \Alpha_\ell^{k,j},
\end{align*}
also using that $\qpcg \leq 1$. This concludes the proof of~\eqref{eq6:ell-k-j}.

Fourth, we prove that the linearization error $\enorm{u_\ell^\star - u_\ell^{k-1,\j}}$ dominates the modified total error $\Alpha_\ell^{k,\j}$, before the linearization stopping criterion~\eqref{eq:st_crit_pic} is reached, i.e.,
\begin{align}\label{eq4:ell-k-j}
 \boxed{ \Alpha_\ell^{k,\j} \lesssim \enorm{u_\ell^\star - u_\ell^{k-1,\j}}
 \quad \text{for all }
 (\ell,k,\j) \in \QQ \text{ with } 1 \le k < \k(\ell).}
\end{align}
Since $1\le k<\k(\ell)$, we obtain that
\begin{align*}
&\Alpha_\ell^{k,\j}
		= \enorm{u_\ell^{\star} - u_\ell^{k,\j}} + \enorm{u_\ell^{k,\star} - u_\ell^{k,\j}} + \eta_\ell(u_\ell^{k,\j})
 		\reff{eq2:pcg:contraction}\lesssim \enorm{u_\ell^{\star} - u_\ell^{k,\j}} + \enorm{u_\ell^{k,\j} - u_\ell^{k,\j-1} } + \eta_\ell(u_\ell^{k,\j})
 \\& \quad
 \reffleft{eq:pcg:stopping}\lesssim \enorm{u_\ell^{\star} - u_\ell^{k,\j}} + \enorm{u_\ell^{k,\j} - u_\ell^{k-1,\j}} + \eta_\ell(u_\ell^{k,\j})
 		\reff{eq2:picard:contraction}\lesssim \enorm{u_\ell^{k,\j} - u_\ell^{k-1,\j}} + \eta_\ell(u_\ell^{k,\j})
 \\& \quad
 \reffleft{eq:picard:stopping_neg}\lesssim \enorm{u_\ell^{k,\j} - u_\ell^{k-1,\j}}
 		\reff{eq2:picard:contraction}\lesssim \enorm{u_\ell^{\star} - u_\ell^{k-1,\j}},
\end{align*}
where we employ Lemma~\ref{lemma:picard:contraction} and hence require $0<\lpcg+ \lpcg/\lpic$ to be sufficiently small. This proves~\eqref{eq4:ell-k-j}.

Fifth, we consider the use of nested iteration when refining the mesh. We prove that
\begin{align}\label{eq3:ell-k-j}
 \boxed{ \Alpha_\ell^{0,\j} \lesssim \eta_{\ell-1}(u_{\ell-1}^{\k,\j}) \le \Alpha_{\ell-1}^{\k,\j}
 \quad \text{for all }
 (\ell,\k,\j) \in \QQ.}
\end{align}
To this end, note that
\begin{align}\label{eq3:ell-k-j_aux1}
 \enorm{u_\ell^\star - u_{\ell-1}^{\k,\j}}
 		\le \enorm{u^\star - u_\ell^\star} + \enorm{u^\star - u_{\ell-1}^{\k,\j}}
 		\reff{eq:cea}\lesssim \enorm{u^\star - u_{\ell-1}^{\k,\j}}
 		\reff{eq:reliability}\lesssim \eta_{\ell-1}(u_{\ell-1}^{\k,\j}).
\end{align}
Next, recall from~\eqref{eq:init:**} that $u_\ell^{0,\star} = u_\ell^{0,\j} = u_{\ell-1}^{\k,\j}$. Hence, it follows from~\eqref{axiom:stability} used on non-refined mesh elements and \eqref{axiom:reduction} used on refined mesh elements that
\begin{align*}
 \Alpha_\ell^{0,\j} = \enorm{u_\ell^\star - u_\ell^{0,\j}} + \eta_\ell(u_\ell^{0,\j})
 		\reff{eq:init:**}= \enorm{u_\ell^\star - u_{\ell-1}^{\k,\j}} + \eta_\ell(u_{\ell-1}^{\k,\j})
 		&\reff{eq3:ell-k-j_aux1}\lesssim \eta_{\ell-1}(u_{\ell-1}^{\k,\j}) + \eta_\ell(u_{\ell-1}^{\k,\j})\\
 &\le 2 \, \eta_{\ell-1}(u_{\ell-1}^{\k,\j}).
\end{align*}
This proves~\eqref{eq3:ell-k-j}.

Sixth, we prove that
\begin{align}\label{eq5:ell-k-j}
 \boxed{ \Alpha_\ell^{\k,\j} \lesssim \Alpha_\ell^{k,\j}
 \quad \text{for all }
 (\ell,k,\j) \in \QQ  ,}
\end{align}
related to the linearization error contraction.
We first consider $k = \k(\ell) - 1 \geq 0$. Note that
\begin{align} \label{eq5:ell-k-j_aux1}
 &\enorm{u_\ell^{\k,\star} - u_\ell^{\k-1,\j}}
 		\le \enorm{u_\ell^\star - u_\ell^{\k,\star}} + \enorm{u_\ell^\star - u_\ell^{\k-1,\j}}
 		\reff{eq:picard:contraction0}\lesssim \enorm{u_\ell^\star - u_\ell^{\k-1,\j}}
 		\le \Alpha_\ell^{\k-1,\j}.
\end{align}
Hence, the triangle inequality leads to
\begin{align*}
 \Alpha_\ell^{\k,\j}
 &= \enorm{u_\ell^{\star} - u_\ell^{\k,\j}}
 		+ \enorm{u_\ell^{\k,\star} - u_\ell^{\k,\j}}
 		+ \eta_\ell(u_\ell^{\k,\j})
 \\&
 \le \enorm{u_\ell^\star - u_\ell^{\k-1,\j}}
 		+ \enorm{u_\ell^{\k,\star}-u_\ell^{\k-1,\j}}
 		+ 2 \, \enorm{u_\ell^{\k,\j}-u_\ell^{\k-1,\j}}
 		+ \eta_\ell(u_\ell^{\k,\j})
 \\&
 \reffleft{eq5:ell-k-j_aux1}\lesssim \Alpha_\ell^{\k-1,\j}
 		+ \enorm{u_\ell^{\k,\j}-u_\ell^{\k-1,\j}}
 		+ \eta_\ell(u_\ell^{\k,\j})
 \\&
 \reffleft{axiom:stability}\lesssim \Alpha_\ell^{\k-1,\j}
 		+ \enorm{u_\ell^{\k,\j} - u_\ell^{\k-1,\j}}
 		\reff{eq2:picard:contraction}\lesssim \Alpha_\ell^{\k-1,\j} + \enorm{u_\ell^\star-u_\ell^{\k-1,\j}}
 		\le 2 \, \Alpha_\ell^{\k-1,\j}.
\end{align*}
This proves~\eqref{eq5:ell-k-j} for $k = \k(\ell) - 1$. Note that the same argument also applies when $\k=1$.  If $0 \le k \le \k(\ell) - 2$, then
\begin{align*}
 \Alpha_\ell^{\k,\j} \lesssim \Alpha_\ell^{\k-1,\j} \reff{eq4:ell-k-j}\lesssim \enorm{u_\ell^\star - u_\ell^{\k-2,\j}}
 \reff{eq:picard:contraction}\le \enorm{u_\ell^\star - u_\ell^{k,\j}}
 \le \Alpha_\ell^{k,\j},
\end{align*}
also using that $\qpic' \leq 1$.
This concludes the proof of~\eqref{eq5:ell-k-j}.

Seventh, we consider the use of nested iteration when passing to the next perturbed Banach--Picard step. We prove that
\begin{align}\label{eq3:ell-k-0}
 \boxed{ \Alpha_\ell^{k,0} \lesssim \Alpha_{\ell}^{k-1,\j}
 \quad \text{for all }
 (\ell,k,0) \in \QQ \text{ with } k \ge 1.} 
\end{align}
Using~\eqref{eq2:ell-k-j} and recalling the definition $u_\ell^{k,0} = u_\ell^{k-1,\j}$, it holds that
\begin{align*}
\Alpha_\ell^{k,0} = {} & \enorm{u_\ell^\star - u_\ell^{k-1,\j}} + \enorm{u_\ell^{k,\star} - u_\ell^{k,0}} + \eta_\ell(u_\ell^{k-1,\j}) \lesssim \Alpha_\ell^{k-1,\j},
\end{align*}
which is the claim~\eqref{eq3:ell-k-0}.

{\bf Step~3.} This step collects auxiliary estimates following from the geometric series and the contraction properties of the linearization and the algebraic solver.
First, it holds that
\begin{align}
\begin{split}\label{eq:geometric:j}
 \boxed{
 \sum_{j = i+1}^{\j(\ell,k)-1} \Alpha_\ell^{k,j}
 \lesssim \enorm{u_\ell^{k,\star} - u_{\ell}^{k,i}}
 \le \Alpha_\ell^{k,i}
 \quad \text{for all } (\ell,k,i) \in \QQ \text{ with } k \ge 1.
 }
\end{split}
\end{align}
This follows immediately from
$$
 \sum_{j = i+1}^{\j(\ell,k)-1} \Alpha_\ell^{k,j}
 		\reff{eq1:ell-k-j}\lesssim
 		\sum_{j = i+1}^{\j(\ell,k)-1} \enorm{u_\ell^{k,\star} - u_{\ell}^{k,j-1}}
 		\reff{eq:pcg:contraction}\le \enorm{u_\ell^{k,\star} - u_{\ell}^{k,i}}
 		\sum_{j = i}^\infty \qpcg^{j-i}
 		\lesssim \enorm{u_\ell^{k,\star} - u_{\ell}^{k,i}}.
$$
We note that~\eqref{eq:geometric:j} also holds for $\j(\ell,k) = \infty$ (with the convention that then $\j(\ell,k)-1=\infty$).

Analogously, the contraction~\eqref{eq:picard:contraction} of the perturbed Banach--Picard iteration leads to
\begin{align}
\begin{split}\label{eq:geometric:k}
 \boxed{
 \sum_{k = i+1}^{\k(\ell)-1} \Alpha_\ell^{k,\j}
 		\lesssim \enorm{u_\ell^\star - u_\ell^{i,\j}}
 		\le \Alpha_\ell^{i,\j}
 		\quad \text{for all } (\ell,i,\j) \in \QQ.
 }
\end{split}
\end{align}
This follows immediately from
$$
 \sum_{k = i+1}^{\k(\ell)-1} \Alpha_\ell^{k,\j}
 \reff{eq4:ell-k-j}\lesssim \sum_{k = i+1}^{\k(\ell)-1} \enorm{u_\ell^\star - u_\ell^{k-1,\j}}
 \reff{eq:picard:contraction}\lesssim \enorm{u_\ell^\star - u_\ell^{i,\j}}
 \sum_{k=i}^\infty (\qpic')^{k-i}
 \lesssim \enorm{u_\ell^\star - u_\ell^{i,\j}}.
$$
We note that~\eqref{eq:geometric:k} also holds for $\k(\ell) = \infty$ (with the convention that then $\k(\ell)-1=\infty$).

With the analogous convention $\underline \ell - 1 = \infty$ when $\underline \ell = \infty$, we finally prove that
\begin{align}\label{eq:geometric:ell}
\boxed{
 \sum_{\ell = i + 1}^{\underline\ell-1} \Alpha_\ell^{\k,\j}
 		\lesssim \Alpha_i^{\k,\j}
 		\quad \text{for all } (i,\k,\j) \in \QQ.
}
\end{align}
This follows from Step~1 and
$$
 \sum_{\ell = i + 1}^{\underline\ell-1} \Alpha_\ell^{\k,\j}
 \reff{eq:Alpha-Delta}\simeq \sum_{\ell = i + 1}^{\underline\ell-1} \Delta_\ell^{\k,\j}
 \reff{eq:banach2}\lesssim \Delta_i^{\k,\j}
 \reff{eq:Alpha-Delta}\simeq  \Alpha_i^{\k,\j}.
$$

{\bf Step~4.} From now on, let $(\ell',k',j') \in \QQ$ be arbitrary. Suppose first that $\underline\ell = \infty$, i.e., both algebraic and linearization solvers terminate at some finite values $\k(\ell)$ for all $\ell \geq 0$ and $\j(\ell,k)$ for all $\ell \geq 0$ and all $k \leq \k(\ell)$, whereas infinitely many steps of mesh-refinement take place. By the definition of our index set $\QQ$ in~\eqref{eq_QQ} (which in particular features nested iterates), it holds that
\begin{align}\label{eq:sums} \begin{split}
 \sum_{\substack{(\ell,k,j)\in\QQ \\ (\ell,k,j) > (\ell',k',j')}} \Alpha_\ell^{k,j}
 = {} & \sum_{\ell = \ell' + 1}^\infty \Bigg(\Alpha_{\ell}^{0,0} + \sum_{k=1}^{\k(\ell)} \Big(\Alpha_{\ell}^{k,0} +  \sum_{j=1}^{\j(\ell,k)} \Alpha_\ell^{k,j} \Big)\Bigg) \\
 {} & + \sum_{k=k' + 1}^{\k(\ell')} \Big(\Alpha_{\ell'}^{k,0} +  \sum_{j=1}^{\j(\ell',k)} \Alpha_{\ell'}^{k,j} \Big)
 + \sum_{j=j' + 1}^{\j(\ell',k')} \Alpha_{\ell'}^{k',j}\\
\lesssim {} & \sum_{\ell = \ell' + 1}^\infty \sum_{k=1}^{\k(\ell)} \sum_{j=1}^{\j(\ell,k)} \Alpha_\ell^{k,j}
 + \sum_{k=k' + 1}^{\k(\ell')} \sum_{j=1}^{\j(\ell',k)} \Alpha_{\ell'}^{k,j}
 + \sum_{j=j' + 1}^{\j(\ell',k')} \Alpha_{\ell'}^{k',j},
\end{split}\end{align}
where we have employed estimates~\eqref{eq3:ell-k-j} and~\eqref{eq3:ell-k-0} in order to start all the summations from $k=1$ and $j=1$.

We consider the three summands in~\eqref{eq:sums} separately.
For the first sum, we infer that
\begin{align}\nonumber \label{eq:sums_aux1}
 &\sum_{\ell = \ell' + 1}^\infty \sum_{k=1}^{\k(\ell)} \sum_{j=1}^{\j(\ell,k)} \Alpha_\ell^{k,j}
 		\reff{eq:geometric:j}\lesssim \sum_{\ell = \ell' + 1}^\infty \sum_{k=1}^{\k(\ell)} ( \Alpha_\ell^{k,\j} + \enorm{u_\ell^{k,\star} - u_\ell^{k,0}} )
 		\reff{eq2:ell-k-j}\lesssim \sum_{\ell = \ell' + 1}^\infty \sum_{k=1}^{\k(\ell)} ( \Alpha_\ell^{k,\j} + \Alpha_\ell^{k-1,\j} )
 \\& \quad \nonumber
 \lesssim \sum_{\ell = \ell' + 1}^\infty \Big( \Alpha_\ell^{0,\j} + \sum_{k=1}^{\k(\ell)} \Alpha_\ell^{k,\j} \Big)
 		\reff{eq:geometric:k}\lesssim  \sum_{\ell = \ell' + 1}^\infty \big( \Alpha_\ell^{0,\j} + \Alpha_\ell^{\k,\j} \big)
 		 \reffleft{eq3:ell-k-j}\lesssim \sum_{\ell = \ell' + 1}^\infty \big( \Alpha_{\ell-1}^{\k,\j} + \Alpha_\ell^{\k,\j} \big)
 \\& \quad
 \lesssim \Alpha_{\ell'}^{\k,\j} + \sum_{\ell = \ell' + 1}^\infty \Alpha_\ell^{\k,\j}
  		\reff{eq:geometric:ell}\lesssim\Alpha_{\ell'}^{\k,\j}
 		\reff{eq5:ell-k-j}\lesssim \Alpha_{\ell'}^{k',\j}
 		\reff{eq6:ell-k-j}\lesssim \Alpha_{\ell'}^{k',j'}.
\end{align}
If $k' = \k(\ell')$, the second sum in the bound~\eqref{eq:sums} disappears. If $k' < \k(\ell')$, we infer that
\begin{align}\label{eq:sums_aux2}
\begin{split}
 &\sum_{k=k' + 1}^{\k(\ell')} \sum_{j=1}^{\j(\ell',k)} \Alpha_{\ell'}^{k,j}
 	\reff{eq:geometric:j}\lesssim \sum_{k=k' + 1}^{\k(\ell')} ( \Alpha_{\ell'}^{k,\j} + \enorm{u_{\ell'}^{k,\star} - u_{\ell'}^{k,0}} )
 	\reff{eq2:ell-k-j}\lesssim \sum_{k=k' + 1}^{\k(\ell')} ( \Alpha_{\ell'}^{k,\j} + \Alpha_{\ell'}^{k-1,\j} ) \\& \qquad
 \lesssim \Alpha_{\ell'}^{k',\j} + \sum_{k=k' + 1}^{\k(\ell')} \Alpha_{\ell'}^{k,\j}
 \reff{eq:geometric:k}\lesssim \Alpha_{\ell'}^{k',\j} + \Alpha_{\ell'}^{\k,\j}
 	\reff{eq5:ell-k-j}\le \Alpha_{\ell'}^{k',\j}
 	\reff{eq6:ell-k-j}\lesssim \Alpha_{\ell'}^{k',j'}.
\end{split}
\end{align}
If $j' = \j(\ell',k')$, the third sum in the bound~\eqref{eq:sums} disappears. If $j' < \j(\ell',k')$, we infer that
\begin{align}\label{eq:sums_aux3}
 \sum_{j=j' + 1}^{\j(\ell',k')} \Alpha_{\ell'}^{k',j}
 \reff{eq:geometric:j}\le \Alpha_{\ell'}^{k',\j} + \Alpha_{\ell'}^{k',j'}
 \reff{eq6:ell-k-j}\lesssim \Alpha_{\ell'}^{k',j'}.
\end{align}
Summing up~\eqref{eq:sums}--\eqref{eq:sums_aux3}, we see that
\begin{align*}
 \boxed{\sum_{\substack{(\ell,k,j)\in\QQ \\ (\ell,k,j) > (\ell',k',j')}} \Alpha_\ell^{k,j}
 \lesssim \Alpha_{\ell'}^{k',j'}
 \quad\text{provided that }\underline\ell = \infty.}
\end{align*}

{\bf Step~5.} Suppose that $\underline\ell < \infty$ and $\k(\underline\ell) = \infty$, i.e., for the mesh $\TT_{\underline\ell}$, the linearization loop does not terminate, and, moreover, $\ell' < \underline\ell$. Then, it holds that
\begin{align}\label{eq:linconv_aux4}
\hspace{-4pt}\sum_{\substack{(\ell,k,j)\in\QQ \\ (\ell,k,j)>(\ell',k',j')}} \!\!\!\!\! \Alpha_\ell^{k,j}
 \lesssim \sum_{k=1}^\infty\sum_{j=1}^{\j(\underline\ell,k)}\Alpha_{\underline\ell}^{k,j}
 +\sum_{\ell=\ell'+1}^{\underline\ell-1}\sum_{k=1}^{\k(\ell)}\sum_{j=1}^{\j(\ell,k)}\Alpha_{\ell}^{k,j}
 + \sum_{k=k'+1}^{\k(\ell')}\sum_{j=1}^{\j(\ell',k)}\Alpha_{\ell'}^{k,j}
 + \sum_{j=j'+1}^{\j(\ell',k')}\Alpha_{\ell'}^{k',j} .
\end{align}
We argue as before to see that
\begin{align}\label{eq:linconv_aux4bis}
 \hspace{-4pt}\sum_{\ell=\ell'+1}^{\underline\ell-1}\sum_{k=1}^{\k(\ell)}\sum_{j=1}^{\j(\ell,k)}\Alpha_{\ell}^{k,j}
 \reff{eq:sums_aux1}\lesssim \Alpha_{\ell'}^{k',j'},
 \quad
 \sum_{k=k'+1}^{\k(\ell')}\sum_{j=1}^{\j(\ell',k)}\Alpha_{\ell'}^{k,j}
 \reff{eq:sums_aux2}\lesssim \Alpha_{\ell'}^{k',j'},
 \quad \text{and} 
 \sum_{j=j'+1}^{\j(\ell',k')}\Alpha_{\ell'}^{k',j}
 \reff{eq:sums_aux3}\lesssim \Alpha_{\ell'}^{k',j'}.
\end{align}
It only remains to estimate
\begin{align}\label{eq:sums_aux5}
\begin{split}
\sum_{k=1}^\infty\sum_{j=1}^{\j(\underline\ell,k)}\Alpha_{\underline\ell}^{k,j}
		&\reff{eq:geometric:j}\lesssim \sum_{k=1}^{\infty}\big(\Alpha_{\underline\ell}^{k,\j} + \enorm{u_{\underline\ell}^{k,\star}-u_{\underline\ell}^{k,0}} \big)
		\reff{eq2:ell-k-j}\lesssim \Alpha_{\underline\ell}^{0,\j} + \sum_{k=1}^\infty\Alpha_{\underline\ell}^{k,\j}
		\reff{eq:geometric:k}\lesssim \Alpha_{\underline\ell}^{0,\j}
\\&
\reff{eq3:ell-k-j}\lesssim \Alpha_{\underline\ell-1}^{\k,\j}
		\le \Alpha_{\ell'}^{\k,\j} + \sum_{\ell=\ell'+1}^{\underline{\ell}-1}\Alpha_\ell^{\k,\j}
		\reff{eq:geometric:ell}\lesssim \Alpha_{\ell'}^{\k,\j}
		\reff{eq5:ell-k-j}\lesssim \Alpha_{\ell'}^{k',\j}
		\reff{eq6:ell-k-j}\lesssim \Alpha_{\ell'}^{k',j'}.
\end{split}
\end{align}
Altogether, we hence obtain that
\begin{align*}
 \boxed{\sum_{\substack{(\ell,k,j)\in\QQ \\ (\ell,k,j) > (\ell',k',j')}} \Alpha_\ell^{k,j}
 \lesssim \Alpha_{\ell'}^{k',j'}
 \quad\text{provided that }\ell' < \underline\ell < \infty \text{ and } \k(\underline\ell) = \infty.}
\end{align*}

{\bf Step~6.} Suppose that $\underline\ell < \infty$ and $\k(\underline \ell) = \infty$, i.e., for the mesh $\TT_{\underline \ell}$, the linearization loop does not terminate, and moreover, $\ell' = \underline\ell$. Arguing as in~\eqref{eq:sums_aux5} and~\eqref{eq:sums_aux3}, it holds that
\begin{align}\label{eq:linconv_aux1}
\boxed{\sum_{\substack{(\ell,k,j)\in\QQ \\ (\ell,k,j) > (\ell',k',j')}} \Alpha_\ell^{k,j} \lesssim \sum_{k=k'+1}^\infty\sum_{j=1}^{\j(\ell',k)}\Alpha_{\ell'}^{k,j} + \sum_{j=j'+1}^{\j(\ell',k')}\Alpha_{\ell'}^{k',j}
\lesssim \Alpha_{\ell'}^{k',j'}.}
\end{align}

{\bf Step~7.} Suppose that $\underline\ell < \infty$, where $\k(\underline\ell) < \infty$ and hence $\j(\underline\ell,\k) = \infty$, i.e., the linear solver does not terminate for the linearization step $\k(\underline\ell)$. Suppose moreover $\ell' < \underline\ell$.
Then, it holds that
\begin{align}\label{eq:alg_solv_inf_1}
\begin{split}
\sum_{\substack{(\ell,k,j)\in\QQ \\ (\ell,k,j) > (\ell',k',j')}} \Alpha_\ell^{k,j}
		\lesssim 
				\sum_{j=1}^\infty\Alpha_{\underline\ell}^{\k,j}
				+ \sum_{k=1}^{\k(\underline\ell)-1}\sum_{j=1}^{\j(\underline\ell,k)}\Alpha_{\underline\ell}^{k,j}
&+\sum_{\ell=\ell'+1}^{\underline\ell-1}\sum_{k=1}^{\k(\ell)}\sum_{j=1}^{\j(\ell,k)}\Alpha_\ell^{k,j}\\
&+ \sum_{k=k'+1}^{\k(\ell')}\sum_{j=1}^{\j(\ell',k)}\Alpha_{\ell'}^{k,j}
				+ \sum_{j=j'+1}^{\j(\ell',k')}\Alpha_{\ell'}^{k',j}.
\end{split}
\end{align}
We argue as before to see that
\begin{align*}
\sum_{\ell=\ell'+1}^{\underline\ell-1}\sum_{k=1}^{\k(\ell)}\sum_{j=1}^{\j(\ell,k)}\Alpha_{\ell}^{k,j}
			\reff{eq:sums_aux1}\lesssim \Alpha_{\ell'}^{k',j'},
		\quad
 		\sum_{k=k'+1}^{\k(\ell')}\sum_{j=1}^{\j(\ell',k)}\Alpha_{\ell'}^{k,j}
 			\reff{eq:sums_aux2}\lesssim \Alpha_{\ell'}^{k',j'},
 		\quad \text{and} \quad
 		\sum_{j=j'+1}^{\j(\ell',k')}\Alpha_{\ell'}^{k',j}
 			\reff{eq:sums_aux3}\lesssim \Alpha_{\ell'}^{k',j'}.
\end{align*}
For the first sum in~\eqref{eq:alg_solv_inf_1}, we get that
\begin{align}\label{eq:alg_solv_inf_1_aux1}
\sum_{j=1}^\infty\Alpha_{\underline\ell}^{\k,j}
		\reff{eq:geometric:j}\lesssim \enorm{u_{\underline\ell}^{\k,\star}-u_{\underline\ell}^{\k,0}}
		\reff{eq2:ell-k-j}\lesssim\Alpha_{\underline\ell}^{\k-1,\j}
		\reff{eq:sums_aux1}{\lesssim} \Alpha_{\ell'}^{k',j'}.
\end{align}
Hence, it only remains to estimate to estimate the second sum in~\eqref{eq:alg_solv_inf_1}, which can be treated analogously to~\eqref{eq:sums_aux5} in Step~5. This proves that
\begin{align*}
\sum_{k=1}^{\k(\underline\ell)-1}\sum_{j=1}^{\j(\underline\ell,k)}\Alpha_{\underline\ell}^{k,j}
		\reff{eq:sums_aux5}\lesssim\Alpha_{\ell'}^{k',j'}.
\end{align*}

Altogether, we obtain that
\begin{align*}
 \boxed{\sum_{\substack{(\ell,k,j)\in\QQ \\ (\ell,k,j) > (\ell',k',j')}} \Alpha_\ell^{k,j}
 		\lesssim \Alpha_{\ell'}^{k',j'}
 		\quad\text{provided that }\ell' < \underline\ell < \infty, \, \k(\underline\ell) < \infty, \text{ and }\j(\underline\ell,\k)=\infty.}
\end{align*}

{\bf Step~8.} Suppose that $\underline\ell < \infty$, where $\k(\underline\ell) < \infty$ and hence $\j(\underline\ell,\k) = \infty$, i.e., the linear solver does not terminate for the linearization step $\k(\underline\ell)$. Suppose moreover $\ell' = \underline\ell$ but $k' < \k(\ell')$.
Then, it holds that
\begin{align}\label{eq:alg_solv_inf_2}
\begin{split}
\sum_{\substack{(\ell,k,j)\in\QQ \\ (\ell,k,j) > (\ell',k',j')}} \Alpha_\ell^{k,j}
		\lesssim \sum_{j=1}^\infty\Alpha_{\ell'}^{\k,j}
                + \sum_{k=k'+1}^{\k(\ell')-1}\sum_{j=1}^{\j(\ell',k)}\Alpha_{\ell'}^{k,j}
				+ \sum_{j=j'+1}^{\j(\ell',k')}\Alpha_{\ell'}^{k',j}.
\end{split}
\end{align}
We argue as before to see that
\begin{align*}
 		\sum_{j=1}^{\infty}\Alpha_{\ell'}^{\k,j}
 			\reff{eq:alg_solv_inf_1_aux1}\lesssim\Alpha_{\ell'}^{k',j'},
 		\quad
 \sum_{k=k'+1}^{\k(\ell')-1}\sum_{j=1}^{\j(\ell',k)}\Alpha_{\ell'}^{k,j}
 			\reff{eq:sums_aux2}\lesssim \Alpha_{\ell'}^{k',j'},
 		\quad \text{and} \quad
 		\sum_{j=j'+1}^{\j(\ell',k')}\Alpha_{\ell'}^{k',j}
 			\reff{eq:sums_aux3}\lesssim \Alpha_{\ell'}^{k',j'}.
\end{align*}

Hence, we obtain that
\begin{align*}
 \boxed{\sum_{\substack{(\ell,k,j)\in\QQ \\ (\ell,k,j) > (\ell',k',j')}} \Alpha_\ell^{k,j}
 		\lesssim \Alpha_{\ell'}^{k',j'}
 		\quad\text{provided that }\ell' = \underline\ell < \infty, \, k'<\k(\ell') < \infty \text{, and }\j(\ell',\k)=\infty.}
\end{align*}

{\bf Step~9.} Suppose that $\underline\ell < \infty$, where $\k(\underline\ell) < \infty$ and hence $\j(\underline\ell,\k) = \infty$, i.e., the linear solver does not terminate for the linearization step $\k(\underline\ell)$. Suppose $\ell' = \underline\ell$ and $k' = \k(\ell')$.
 Then, it holds that
\begin{align}
\begin{split}
\boxed{\sum_{\substack{(\ell,k,j)\in\QQ \\ (\ell,k,j) > (\ell',k',j')}} \Alpha_\ell^{k,j}
		=  \sum_{j=j'+1}^{\infty}\Alpha_{\ell'}^{k',j}
			\reff{eq:geometric:j}\lesssim\Alpha_{\ell'}^{k',j'}.}
\end{split}
\end{align}

{\bf Step~10.} Suppose that $\underline\ell, \k(\underline\ell), \j(\underline\ell,\k(\underline\ell)) < \infty$, so that Algorithm~\ref{algorithm} finished on step~(iii) when $\eta_{\underline\ell}(u_{\underline\ell}^{\k,\j}) = 0$. From~\eqref{eq:reliability}, we see that $\eta_{\underline\ell}(u_{\underline\ell}^{\k,\j}) = 0$ implies $u^\star = u_{\underline\ell}^{\k,\j}$, i.e., the exact solution was found. Moreover, through the stopping criteria~\eqref{eq:st_crit_pic} and~\eqref{eq:st_crit_pcg}, we see that $u_{\underline\ell}^{\k-1,\j} = u_{\underline\ell}^{\k,\j-1} = u_{\underline\ell}^{\k,\j}$, so that~\eqref{eq2:picard:contraction} gives $u_{\underline\ell}^\star = u_{\underline\ell}^{\k,\j}$, and finally~\eqref{eq:banach_non_pert} gives $u_{\underline\ell}^{\k,\star} = u_{\underline\ell}^{\k,\j}$. Thus $\Alpha_{\underline\ell}^{\k,\j} = 0$.

Let $\ell' < \underline \ell$. Then, as in~\eqref{eq:linconv_aux4},
\begin{align*}
\hspace{-4pt}\sum_{\substack{(\ell,k,j)\in\QQ \\ (\ell,k,j)>(\ell',k',j')}} \!\!\!\!\! \Alpha_\ell^{k,j}
 \lesssim \sum_{k=1}^{\k(\underline\ell)}\sum_{j=1}^{\j(\underline\ell,k)}\Alpha_{\underline\ell}^{k,j}
 +\sum_{\ell=\ell'+1}^{\underline\ell-1}\sum_{k=1}^{\k(\ell)}\sum_{j=1}^{\j(\ell,k)}\Alpha_{\ell}^{k,j}
 + \sum_{k=k'+1}^{\k(\ell')}\sum_{j=1}^{\j(\ell',k)}\Alpha_{\ell'}^{k,j}
 + \sum_{j=j'+1}^{\j(\ell',k')}\Alpha_{\ell'}^{k',j}.
\end{align*}
Here, the last three terms are estimated as in~\eqref{eq:linconv_aux4bis}, whereas for the first one, we can proceed as in~\eqref{eq:sums_aux5}, crucially noting that the last summand $\Alpha_{\underline\ell}^{\k,\j}$ is zero.

If $\ell' = \underline \ell$, three cases are possible. The first case is $k' < \k$. Then
\begin{align*}
\sum_{\substack{(\ell,k,j)\in\QQ \\ (\ell,k,j)>(\ell',k',j')}} \!\!\!\!\! \Alpha_\ell^{k,j}
 \lesssim \sum_{k=k'+1}^{\k(\ell')}\sum_{j=1}^{\j(\ell',k)}\Alpha_{\ell'}^{k,j}
 + \sum_{j=j'+1}^{\j(\ell',k')}\Alpha_{\ell'}^{k',j},
\end{align*}
which is controlled as in~\eqref{eq:linconv_aux4bis}. The second case is $k' = \k$ but $j' < \j$, where directly
\begin{align*}
\sum_{\substack{(\ell,k,j)\in\QQ \\ (\ell,k,j)>(\ell',k',j')}} \!\!\!\!\! \Alpha_\ell^{k,j}
 \leq \sum_{j=j'+1}^{\j(\ell',k')}\Alpha_{\ell'}^{k',j} \reff{eq:geometric:j}\lesssim\Alpha_{\ell'}^{k',j'},
\end{align*}
again using $\Alpha_{\ell'}^{k',\j} = 0$. Finally, in the third case, $k' = \k$ and $j' = \j$, but then the sum is void. Altogether
\begin{align}
\begin{split}
\boxed{\sum_{\substack{(\ell,k,j)\in\QQ \\ (\ell,k,j) > (\ell',k',j')}} \Alpha_\ell^{k,j}
		\lesssim\Alpha_{\ell'}^{k',j'}}
\end{split}
\end{align}
also holds in this case.

\bigskip

{\bf Step~11.} Combining Steps~4--10 that cover all possible runs of Algorithm~\ref{algorithm} with Step~1, we finally see that
\begin{align*}
 \sum_{\substack{(\ell,k,j)\in\QQ \\ (\ell,k,j) > (\ell',k',j')}} \Delta_\ell^{k,j}
 \reff{eq:Alpha-Delta}\simeq
 \sum_{\substack{(\ell,k,j)\in\QQ \\ (\ell,k,j) > (\ell',k',j')}} \Alpha_\ell^{k,j}
 \lesssim \Alpha_{\ell'}^{k',j'}
 \reff{eq:Alpha-Delta}\simeq \Delta_{\ell'}^{k',j'}
 \quad \text{for all } (\ell',k',j') \in \QQ.
\end{align*}
This concludes the proof of~\eqref{eq_ell-k-j}.
\end{proof}

\begin{proof}[{\bfseries Proof of Theorem~\ref{theorem:linconv}}]
The proof is split into two steps.

{\bf Step~1.} For the convenience of the reader, we recall an argument from the proof of~\cite[Lemma 4.9]{axioms}: For $M\in\N\cup\{\infty\}$, let $C > 0$ and $\alpha_n \ge 0$ satisfy that
\begin{align*}
 \sum_{n = N + 1}^M \alpha_n \le C \, \alpha_N
 \quad \text{for all } N \in \N_0 \text{ with } N<\min\{M,\infty\}.
\end{align*}
Then,
\begin{align*}
 (1+C^{-1}) \, \sum_{n = N + 1}^M \alpha_n \le \sum_{n = N + 1}^M \alpha_n + \alpha_N = \sum_{n = N}^{M} \alpha_n
 \quad \text{for all } N \in \N_0.
\end{align*}
Inductively, it follows for all $N, m \in \N_0$ with $N+m<\min\{M+1,\infty\}$ that
\begin{align*}
 (1+C^{-1})^m \, \sum_{n = N + m}^M \alpha_n \le \sum_{n = N + 1}^M \alpha_n + \alpha_N = \sum_{n = N}^M \alpha_n.
\end{align*}
We thus conclude for all $N, m \in \N_0$ with $N+m<\min\{M+1,\infty\}$ that
\begin{align*}
 \alpha_{N+m} \le \sum_{n = N + m}^M \alpha_n \le (1 + C^{-1})^{-m} \sum_{n = N}^M \alpha_n
 \le (1 + C) \, (1 + C^{-1})^{-m} \alpha_N.
\end{align*}

{\bf Step~2.} Since the index set $\QQ$ is linearly ordered with respect to the total step counter $|(\cdot,\cdot,\cdot)|$, Lemma~\ref{lemma:ell-k-j} and Step~1 imply that
\begin{align*}
\Delta_{\ell'}^{k',j'}\leq\Clin\,\qlin^{|(\ell',k',j')|-|(\ell,k,j)|}\,\Delta_\ell^{k,j}\quad\textrm{for all }(\ell,k,j),(\ell',k',j')\in\QQ\textrm{ with }(\ell',k',j') \ge (\ell,k,j),
\end{align*}
where $\Clin = 1 + \Csum$ and $\qlin = \Csum / (\Csum + 1)$.
This concludes the proof.
\end{proof}

\section{Proof of Theorem~\ref{theorem:rate} (optimal decay rate wrt.\ degrees of freedom)}
\label{section:optimal_rate_DoFs}

The first result of this section proves the left inequality in~\eqref{eq:opt_rate}:

\begin{lemma}\label{lemma:opt_conv:lower_bound}
	Suppose \eqref{axiom:sons} as well as \eqref{axiom:stability}, \eqref{axiom:reduction}, and \eqref{axiom:discrete_reliability}. Let $s>0$ and assume $\norm{u^\star}{\mathbb{A}_s}>0$. Then, it holds that
	\begin{align}
		\norm{u^\star}{\mathbb{A}_s} \le \copt\sup_{(\ell',k',j')\in\QQ} (\#\TT_{\ell'}-\#\TT_0+1)^s\Delta_{\ell'}^{k',j'},
	\end{align}
	where the constant $\copt>0$ depends only on $\Ccea=L/\alpha$, $\Cstab$, $\Crel$, $\Cson$, $\#\TT_0$, $s$, and, if $\underline\ell<\infty$, additionally on $\underline\ell$.
\end{lemma}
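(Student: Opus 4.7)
The approach begins by setting $M := \sup_{(\ell',k',j') \in \QQ} (\#\TT_{\ell'} - \#\TT_0 + 1)^s\, \Delta_{\ell'}^{k',j'}$ and assuming $M < \infty$ (otherwise the claim is vacuous). It then suffices to exhibit, for each $N \in \N_0$, a mesh $\TT_N \in \T(N)$ with $(N+1)^s\bigl[\enorm{u^\star - u_N^\star} + \eta_N(u_N^\star)\bigr] \lesssim M$, where $u_N^\star$ is the Galerkin solution~\eqref{eq:exact_solution_disc} on $\TT_N$. Taking the supremum over $N$ in the definition~\eqref{eq:As} of $\norm{u^\star}{\mathbb{A}_s}$ then delivers the claim.

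The first preparatory step is to bound the exact-discrete error-plus-estimator against the quasi-error $\Delta_\ell^{k,j}$ of any iterate. By the C\'ea lemma~\eqref{eq:cea}, $\enorm{u^\star - u^\star_\ell} \le \Ccea\enorm{u^\star - u_\ell^{k,j}}$ and $\enorm{u^\star_\ell - u_\ell^{k,j}} \le (1+\Ccea)\enorm{u^\star - u_\ell^{k,j}}$, while the stability axiom~\eqref{axiom:stability} applied with $\TT_\coarse = \TT_\fine = \TT_\ell$ and $\UU = \TT_\ell$ yields $\eta_\ell(u^\star_\ell) \le \eta_\ell(u_\ell^{k,j}) + \Cstab\enorm{u^\star_\ell - u_\ell^{k,j}}$. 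Combining these,
\begin{equation}\label{eq:lb_aux}
\enorm{u^\star - u^\star_\ell} + \eta_\ell(u^\star_\ell) \le \bigl[\Ccea + 1 + \Cstab(1+\Ccea)\bigr]\,\Delta_\ell^{k,j} \quad \text{for all } (\ell,k,j) \in \QQ.
\end{equation}

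For the \emph{main case}---either $\underline\ell = \infty$, or $\underline\ell < \infty$ with $N < \#\TT_{\underline\ell} - \#\TT_0$---the splitting property~\eqref{axiom:sons} forces $\ell \mapsto \#\TT_\ell$ to be strictly increasing across the generated meshes, so there exists a largest $\ell^\star(N) \in \N_0$ satisfying $\#\TT_{\ell^\star(N)} - \#\TT_0 \le N$, for which moreover $\TT_{\ell^\star(N)+1}$ is also generated. Using $\#\TT_{\ell^\star(N)+1} \le \Cson\,\#\TT_{\ell^\star(N)}$ (again from~\eqref{axiom:sons}), the elementary identity $\Cson a - \#\TT_0 + 1 = \Cson(a - \#\TT_0 + 1) + (\Cson-1)(\#\TT_0-1)$ with $a = \#\TT_{\ell^\star(N)}$ produces a constant $C^\star = C^\star(\Cson,\#\TT_0)$ with
\[
N+1 \le \#\TT_{\ell^\star(N)+1} - \#\TT_0 + 1 \le C^\star\bigl(\#\TT_{\ell^\star(N)} - \#\TT_0 + 1\bigr).
\]
Choosing $\TT_N := \TT_{\ell^\star(N)}$ and invoking~\eqref{eq:lb_aux} at, say, $(k,j) = (0,0)$, this yields
\[
(N+1)^s\bigl[\enorm{u^\star - u^\star_{\ell^\star(N)}} + \eta_{\ell^\star(N)}(u^\star_{\ell^\star(N)})\bigr] \lesssim \bigl(\#\TT_{\ell^\star(N)} - \#\TT_0 + 1\bigr)^s \Delta_{\ell^\star(N)}^{0,0} \le M.
\]

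The \emph{remaining case} is $\underline\ell < \infty$ together with $N \ge \#\TT_{\underline\ell} - \#\TT_0$, where no higher-level generated mesh is available to absorb $(N+1)^s$; taking $\TT_N := \TT_{\underline\ell}$, one must show that $\enorm{u^\star - u^\star_{\underline\ell}} + \eta_{\underline\ell}(u^\star_{\underline\ell}) = 0$, so that the infimum in~\eqref{eq:As} vanishes and the desired bound holds trivially. If the exit at step~(iii) is triggered, then $\eta_{\underline\ell}(u_{\underline\ell}^{\k,\j}) = 0$ combined with Proposition~\ref{proposition:reliability} gives $u^\star = u_{\underline\ell}^{\k,\j} \in \XX_{\underline\ell}$, hence $u^\star = u^\star_{\underline\ell}$ and stability forces $\eta_{\underline\ell}(u^\star_{\underline\ell}) = 0$; if instead one of the inner loops runs indefinitely, the contraction properties~\eqref{eq:pcg:contraction}--\eqref{eq:picard:contraction} together with the finiteness of $M$ and Lemma~\ref{lemma:ell-k-j}/Theorem~\ref{theorem:linconv} force $\Delta_{\underline\ell}^{k,j} \to 0$, so closedness of $\XX_{\underline\ell}$ and stability again yield $u^\star = u^\star_{\underline\ell}$ and $\eta_{\underline\ell}(u^\star_{\underline\ell}) = 0$. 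The \emph{main obstacle} is precisely this second subcase: ruling out a persistent gap between $u^\star$ and $u^\star_{\underline\ell}$ on the terminal mesh without direct recourse to the reliability axiom~\eqref{axiom:reliability} (which is not among the hypotheses of the lemma), and it is through this delicate step that the additional dependence of $\copt$ on $\underline\ell$ enters.
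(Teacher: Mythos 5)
Your proposal is correct and follows essentially the paper's own route: replace the iterate by the exact Galerkin solution via the C\'ea lemma~\eqref{eq:cea} and stability~\eqref{axiom:stability} (the paper's Step~3), choose the finest generated mesh fitting the cardinality budget $N$ and absorb $N+1$ through the splitting property~\eqref{axiom:sons} (the paper's Step~2), and dispose of a finite terminal level by showing $\enorm{u^\star-u_{\underline\ell}^\star}+\eta_{\underline\ell}(u_{\underline\ell}^\star)=0$ on $\TT_{\underline\ell}$ via Theorem~\ref{theorem:linconv} (the paper's Step~1). The one real deviation is your treatment of $\underline\ell<\infty$ with $N<\#\TT_{\underline\ell}-\#\TT_0$: you reuse the maximal-level argument (legitimate, since $\ell^\star(N)+1\le\underline\ell$ is still generated), whereas the paper instead bounds $N+1\le(\Cson^{\underline\ell}-1)\,\#\TT_0$ and compares with level $0$ through quasi-monotonicity of error and estimator (\cite[Lemma~3.5]{axioms}); your variant avoids that auxiliary result and yields a constant independent of $\underline\ell$, which the lemma of course permits. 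The ``obstacle'' you flag---that Proposition~\ref{proposition:reliability} and Theorem~\ref{theorem:linconv} bring in reliability~\eqref{axiom:reliability}, not listed among the lemma's hypotheses---is not a defect of your argument relative to the paper, since the paper's own Step~1 invokes Theorem~\ref{theorem:linconv} in exactly those degenerate cases; moreover, because the conclusion there is the exact identity $\enorm{u^\star-u_{\underline\ell}^\star}+\eta_{\underline\ell}(u_{\underline\ell}^\star)=0$, no additional constants enter $\copt$.
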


\begin{proof}
	The proof is split into three steps.
	First, we recall from~\cite[Lemma~22]{bhp2017} that
	\begin{align}\label{eq:TT0}
		\#\TT_\fine / \#\TT_\coarse
		\le \#\TT_\fine - \#\TT_\coarse + 1
		\le \#\TT_\fine
		\quad \text{for all } \TT_\coarse \in \T \text{ and all } \TT_\fine \in \refine(\TT_\coarse).
	\end{align}

	{\bf Step~1.}

	Let $\underline\ell<\infty$ and $\k(\underline\ell)<\infty$ but $\j(\underline\ell,\k)=\infty$, i.e., the algebraic solver does not stop. According to Theorem~\ref{theorem:linconv}, it holds that
	\begin{align*}
		\Delta_{\underline\ell}^{\k,j}
		= \enorm{u^\star-u_{\underline\ell}^{\k,j}}
		+ \enorm{u_{\underline\ell}^{\k,\star}-u_{\underline\ell}^{\k,j}}
		+ \eta_{\underline\ell}(u_{\underline\ell}^{\k,j})
		\to 0
		\quad\text{as}\quad j\to\infty.
	\end{align*}
	Due to the uniqueness of the limit and the  C\'ea lemma~\eqref{eq:cea}, we obtain that $u^\star=u_{\underline\ell}^\star=u_{\underline\ell}^{\k,\star}$. From stability~\eqref{axiom:stability}, it follows that
	\begin{align*}
		0 \le \eta_{\underline\ell}(u_{\underline\ell}^{\k,\star})
		\reff{axiom:stability}{\lesssim} \eta_{\underline\ell}(u_{\underline\ell}^{\k,j}) + \enorm{u_{\underline\ell}^{\k,\star}-u_{\underline\ell}^{\k,j}}
		\to 0
		\quad\text{as}\quad j\to \infty.
	\end{align*}
	Hence, we see that $\eta_{\underline\ell}(u_{\underline\ell}^\star)=\eta_{\underline\ell}(u_{\underline\ell}^{\k,\star})=0$.

	For the last case, let $\underline\ell<\infty$ and $\k(\underline\ell)=\infty$, i.e., the linearization solver does not stop. Analogously to the previous case, we obtain that
	\begin{align*}
		\Delta_{\underline\ell}^{k,\j}
		= \enorm{u^\star-u_{\underline\ell}^{k,\j}}
		+ \enorm{u_{\underline\ell}^{k,\star}-u_{\underline\ell}^{k,\j}}
		+ \eta_{\underline\ell}(u_{\underline\ell}^{k,\j})
		\to 0
		\quad\text{as}\quad k\to\infty.
	\end{align*}
	With the C\'ea lemma~\eqref{eq:cea}, this leads to
	\begin{align*}
		0 \le \enorm{u_{\underline\ell}^\star-u_{\underline\ell}^{k,\j}}
		\reff{eq:cea}{\le} (1+\Ccea)\enorm{u^\star-u_{\underline\ell}^{k,\j}}
		\to 0
		\quad\text{as}\quad k\to\infty.
	\end{align*}
	Hence, we get that $u^\star = u_{\underline\ell}^\star$. Again, stability~\eqref{axiom:stability} yields that $\eta_{\underline\ell}(u_{\underline\ell}^\star)=0$.

	This implies in any case that $\enorm{u^\star-u_{\underline\ell}^\star} + \eta_{\underline\ell}(u_{\underline\ell}^\star)=0$ and hence that
	\begin{align*}
		\norm{u^\star}{\mathbb{A}_s}
		= \sup_{0\le N<\#\TT_{\underline\ell}-\#\TT_0}\Big(
		(N+1)^s\inf_{\TT_{\rm opt}\in\T(N)}\big[
		\enorm{u^\star-u_{\rm opt}^\star}+\eta_{\rm opt}(u_{\rm opt}^\star)
		\big]\Big)
	\end{align*}
	The term $N+1$ within the supremum can be estimated by
	\begin{align*}
		N+1\le \#\TT_{\underline\ell}-\#\TT_0\reff{axiom:sons}
		\le (\Cson^{\underline\ell}-1)\,\#\TT_0.
	\end{align*}
	The C\'ea lemma \eqref{eq:cea} and~\eqref{axiom:stability}, \eqref{axiom:reduction}, and \eqref{axiom:discrete_reliability} give that $\enorm{u^\star-u_{\rm opt}^\star}\lesssim \enorm{u^\star-u_0^\star}$ and  $\eta_{\rm opt}(u^\star_{\rm opt})\lesssim \eta_0(u^\star_0)$ (see, e.g.,  \cite[Lemma~3.5]{axioms}).
	Altogether, we thus arrive at
	\begin{align}\label{eq:upper bound for As1}
		\norm{u^\star}{\A_s}
		\lesssim
		\enorm{u^\star - u_0^\star} + \eta_0(u_0^\star).
	\end{align}

	{\bf Step~2.}
	We consider the generic case that $\underline\ell = \infty$ and $\eta_{\ell}(u_{\ell}^{\k,\j})>0$ for all $\ell\in\N_0$.
	Algorithm~\ref{algorithm} then guarantees that $\#\TT_\ell\to\infty$ as $\ell\to\infty$.
	Thus, we can argue analogously to the proof of~\cite[Theorem~4.1]{axioms}:
	Let $N \in \N$.
	Choose the maximal $\ell' \in \N_0$ such that
	$ \#\TT_{\ell'} - \#\TT_0 + 1 \le N$.
	Then, $\TT_{\ell'} \in \T(N)$.
	The choice of $N$ guarantees that
	\begin{align}
		\label{eq:upper bound for As2}
		N+1\le \#\TT_{\ell'+1} - \#\TT_0 + 1
		\reff{eq:TT0}\le \#\TT_{\ell'+1}
		\le \Cson \#\TT_{\ell'}
		\reff{eq:TT0}\le \Cson \#\TT_0 \, ( \#\TT_{\ell'} - \#\TT_0 + 1 ).
	\end{align}
	This leads to
	$$
		(N+1)^s\inf_{\TT_{\rm opt}\in\T(N)}\big[\enorm{u^\star-u_{\rm opt}^\star}+\eta_{\rm opt}(u_{\rm opt}^\star)\big]
		\lesssim (\#\TT_{\ell'} - \#\TT_0 + 1)^s \big[ \enorm{u^\star - u_{\ell'}^\star} + \eta_{\ell'}(u_{\ell'}^\star) \big],
	$$%
	and we immediately see that this also holds for $N=0$ with $\ell'=0$.
	Taking the supremum over all $N \in \N_0$, we conclude that
	\begin{align}
		\label{eq:upper bound for As3}
		\norm{u^\star}{\A_s}
		\lesssim \sup_{{\ell'} \in \N_0}(\#\TT_{\ell'} - \#\TT_0 + 1)^s \big[ \enorm{u^\star - u_{\ell'}^\star} + \eta_{\ell'}(u_{\ell'}^\star) \big].
	\end{align}

	{\bf Step~3.} With stability~\eqref{axiom:stability} and the C\'ea lemma~\eqref{eq:cea}, we see for all $(\ell',0,0)\in\QQ$ that
	\begin{align*}
		 & \enorm{u^\star - u_{\ell'}^\star} + \eta_{\ell'}(u_{\ell'}^\star)
		\reff{axiom:stability} \lesssim \enorm{u^\star - u_{\ell'}^\star} + \enorm{u_{\ell'}^\star - u_{\ell'}^{0,0}} + \eta_{\ell'}(u_{\ell'}^{0,0})
		\\ &\quad
		\le 2 \, \enorm{u^\star - u_{\ell'}^\star} + \enorm{u^\star - u_{\ell'}^{0,0}} + \eta_{\ell'}(u_{\ell'}^{0,0})
		\reff{eq:cea} \lesssim \enorm{u^\star - u_{\ell'}^{0,0}} + \eta_{\ell'}(u_{\ell'}^{0,0})
		\le \Delta_{\ell'}^{0,0}.
	\end{align*}
	With \eqref{eq:upper bound for As1} and \eqref{eq:upper bound for As3}, we thus obtain  that
	$$
		\norm{u^\star}{\A_s}
		\lesssim \sup_{(\ell',0,0) \in \QQ} (\#\TT_{\ell'} - \#\TT_0 + 1)^s \, \big[\enorm{u^\star-u_{\ell'}^\star} +\eta_{\ell'}(u_{\ell'}^\star)\big]
		\le \sup_{(\ell', k',j') \in \QQ} (\#\TT_{\ell'} - \#\TT_0 + 1)^s \, \Delta_{\ell'}^{k',j'}.
	$$
	This concludes the proof.
\end{proof}

To prove the upper estimate in~\eqref{eq:opt_rate}, we need the comparison lemma from~\cite[Lemma~4.14]{axioms} for the error estimator of the exact discrete solution $u_\ell^\star\in\XX_\ell$.
\begin{lemma}\label{lemma:comparison}
	Suppose \eqref{axiom:sons}--\eqref{axiom:overlay} as well as \eqref{axiom:stability}, \eqref{axiom:reduction}, and \eqref{axiom:discrete_reliability}. Let $0<\theta'<\theta_{\rm opt}:=(1 + \Cstab^2\Crel^2)^{-1}$. Then, there exist constants $C_1,C_2>0$ such that for all $s>0$ with $0<\norm{u^\star}{\mathbb{A}_s}<\infty$ and all $\TT_H\in\T$, there exists $\RR_H\subseteq\TT_H$ which satisfies
	\begin{align}\label{eq:comparison_lemma0}
		\#\RR_H \le C_1C_2^{-1/s}\norm{u^\star}{\mathbb{A}_s}^{1/s} \, \eta_H(u_H^\star)^{-1/s},
	\end{align}
	as well as the D\"orfler marking criterion
	\begin{align}\label{eq:comparison_lemma}
		\theta'\eta_H(u_H^\star) \le \eta_H(\RR_H,u_H^\star).
	\end{align}
	The constants $C_1,C_2$ depend only on $\Cstab$ and $\Crel$. \qed
\end{lemma}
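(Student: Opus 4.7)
\textbf{Proof proposal for Lemma~\ref{lemma:comparison}.} The plan is the classical overlay/comparison argument from Stevenson--CKNS, reformulated in the axiomatic framework. Fix a tolerance parameter $\kappa > 0$ to be determined and set $\varepsilon := \kappa\,\eta_H(u_H^\star)$. Since $\norm{u^\star}{\A_s}<\infty$, the definition~\eqref{eq:As} yields some $\TT_\varepsilon\in\T$ with
\begin{align*}
\enorm{u^\star-u_\varepsilon^\star}+\eta_\varepsilon(u_\varepsilon^\star)\le\varepsilon
\quad\text{and}\quad
\#\TT_\varepsilon-\#\TT_0+1 \lesssim \norm{u^\star}{\A_s}^{1/s}\,\varepsilon^{-1/s}.
\end{align*}
Let $\TT_{H\varepsilon}:=\TT_H\oplus\TT_\varepsilon$ be the common refinement from~\eqref{axiom:overlay}, and define the candidate marking set $\RR_H:=\TT_H\setminus\TT_{H\varepsilon}$.

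The cardinality bound~\eqref{eq:comparison_lemma0} follows from the structural axioms alone. Indeed, the splitting property~\eqref{axiom:sons} applied to $\TT_H\mapsto\TT_{H\varepsilon}$ yields $\#\RR_H\le\#\TT_{H\varepsilon}-\#\TT_H$, while the overlay estimate~\eqref{axiom:overlay} bounds $\#\TT_{H\varepsilon}-\#\TT_H\le\#\TT_\varepsilon-\#\TT_0$. Plugging in the approximation estimate and using $\varepsilon=\kappa\eta_H(u_H^\star)$ produces~\eqref{eq:comparison_lemma0} with $C_1C_2^{-1/s}$ determined by $\kappa$ and the implicit constant from $\A_s$.

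For the D\"orfler property~\eqref{eq:comparison_lemma}, the strategy is to show that the estimator mass on the complement $\TT_H\cap\TT_{H\varepsilon}$ is small. Split
\begin{align*}
\eta_H(u_H^\star)^2 = \eta_H(\RR_H,u_H^\star)^2 + \eta_H(\TT_H\cap\TT_{H\varepsilon},u_H^\star)^2,
\end{align*}
and control the second summand in two steps. First, stability~\eqref{axiom:stability} applied on the common elements yields $\eta_H(\TT_H\cap\TT_{H\varepsilon},u_H^\star)\le\eta_{H\varepsilon}(\TT_H\cap\TT_{H\varepsilon},u_{H\varepsilon}^\star)+\Cstab\enorm{u_H^\star-u_{H\varepsilon}^\star}$, and discrete reliability~\eqref{axiom:discrete_reliability} gives $\enorm{u_H^\star-u_{H\varepsilon}^\star}\le\Crel\eta_H(\RR_H,u_H^\star)$. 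Second, the combination of stability~\eqref{axiom:stability} and reduction~\eqref{axiom:reduction} (comparing $\TT_\varepsilon$ with its refinement $\TT_{H\varepsilon}$), together with discrete reliability $\enorm{u_\varepsilon^\star-u_{H\varepsilon}^\star}\le\Crel\eta_\varepsilon(u_\varepsilon^\star)$, yields $\eta_{H\varepsilon}(u_{H\varepsilon}^\star)\lesssim\eta_\varepsilon(u_\varepsilon^\star)\le\varepsilon=\kappa\eta_H(u_H^\star)$. Inserting both bounds and applying a Young-type inequality with a parameter $\delta>0$ produces an estimate of the form
\begin{align*}
\eta_H(\TT_H\cap\TT_{H\varepsilon},u_H^\star)^2
\le (1+\delta)\Cstab^2\Crel^2\,\eta_H(\RR_H,u_H^\star)^2 + (1+\delta^{-1})\,C\kappa^2\,\eta_H(u_H^\star)^2.
\end{align*}
Subtracting and rearranging produces the D\"orfler inequality~\eqref{eq:comparison_lemma} with a marking parameter that can be taken arbitrarily close to $\theta_{\rm opt}=(1+\Cstab^2\Crel^2)^{-1/2}$ (squared as in the statement) by choosing $\delta$ small.

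The main obstacle is the constant bookkeeping: one must choose $\delta$ small enough so that the coefficient $(1+\delta)\Cstab^2\Crel^2$ in front of $\eta_H(\RR_H,u_H^\star)^2$ stays strictly below $1/(\theta')^2-1$, and then $\kappa$ small enough so that the $\kappa^2\eta_H(u_H^\star)^2$ term can be absorbed. This is possible precisely under the strict inequality $\theta'<\theta_{\rm opt}$, and fixes the constants $C_1,C_2$ in~\eqref{eq:comparison_lemma0} in terms of $\Cstab$ and $\Crel$ only. Note that the argument implicitly uses that $\eta_H(u_H^\star)>0$; the degenerate case $\eta_H(u_H^\star)=0$ can be handled trivially since then~\eqref{axiom:reliability} forces $u^\star=u_H^\star$ and we may take $\RR_H=\emptyset$.
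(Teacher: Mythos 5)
Your proposal is correct and coincides with the argument the paper itself relies on: the lemma is not proved in the paper but taken over from \cite[Lemma~4.14]{axioms}, and you have reconstructed exactly that standard proof (choose $\TT_\varepsilon$ from the definition of $\norm{u^\star}{\mathbb{A}_s}$ with $\varepsilon\simeq\eta_H(u_H^\star)$, set $\RR_H:=\TT_H\setminus(\TT_H\oplus\TT_\varepsilon)$, bound $\#\RR_H$ via \eqref{axiom:sons}--\eqref{axiom:overlay}, and obtain the D\"orfler property from \eqref{axiom:stability}, \eqref{axiom:reduction}, \eqref{axiom:discrete_reliability} together with quasi-monotonicity $\eta_{H\varepsilon}(u_{H\varepsilon}^\star)\lesssim\eta_\varepsilon(u_\varepsilon^\star)$ and absorption). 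The only blemish is notational: the threshold you reach, $(1+\Cstab^2\Crel^2)^{-1/2}$, is the sharp one for the non-squared marking criterion and is larger than the paper's $\theta_{\rm opt}=(1+\Cstab^2\Crel^2)^{-1}$, so your argument actually proves a slightly stronger statement than the one asserted.
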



\begin{proof}[{\bfseries Proof of Theorem~\ref{theorem:rate}}]The proof is split into four steps. 	Without loss of generality, we may assume that $\norm{u^\star}{\mathbb{A}_s}<\infty$.

	{\bf Step~1.}\quad
	Due to the assumptions $\lpcg + \lpcg/\lpic\le\lpcg^\star$ (from Lemma~\ref{lemma:picard:contraction}) and $\lpic/\theta < \lpic^\star$ (from Lemma~\ref{lemma:banach2:equi_linconv}), we get that $\lpcg\le\lpcg^\star\,\lpic\le\lpcg^\star\,\lpic^\star\,\theta$. Hence, it holds that
	\begin{align*}
		\theta' & =  \frac{\theta+\Cstab\Big( (1+\Cpic)\Cpcg\lpcg + \big[\Cpic + (1+\Cpic)\Cpcg\lpcg\big]\lpic\Big)}{1-\lpic\,/\lpic^\star}                                                \\
		        & \le  \frac{\theta+\Cstab\Big( (1+\Cpic)\Cpcg\lpcg^\star\lpic^\star\theta + \big[\Cpic + (1+\Cpic)\Cpcg\lpcg^\star\lpic^\star\theta\big]\lpic^\star\theta\Big)}{1-\theta}
	\end{align*}
	which converges to $0$ as $\theta\to 0$. As a consequence, \eqref{eq:opt:theta'} holds for sufficiently small $\theta$.

	Clearly, the parameters $\lpcg,\lpic,\theta>0$ can be chosen such that all assumptions are fulfilled. First, choose $\theta>0$ such that $0<\theta<\min\{1,\theta^\star\}$. Then, choose $\lpic>0$ such that $0<\lpic/\theta<\lpic^\star$. Finally, choose $0<\lpcg<1$ such that $\lpcg + \lpcg/\lpic<\lpcg^\star$.


	{\bf Step~2.}\quad
	Recall that $\Cpic=\qpic/(1-\qpic)$ and $\Cpcg=\qsolve/(1-\qsolve)$. Provided that $(\ell+1,0,0)\in\QQ$, it follows from the contraction properties~\eqref{eq2:pcg:contraction} resp.~\eqref{eq2:picard:contraction0}, and the stopping criteria~\eqref{eq:pcg:stopping} resp.~\eqref{eq:picard:stopping} that
	\begin{align*}
		 & \enorm{u_\ell^\star-u_\ell^{\k,\j}} \le \enorm{u_\ell^\star-u_\ell^{\k,\star}}  +  \enorm{u_\ell^{\k,\star}-u_\ell^{\k,\j}}                                              \\
		 & \quad\reffleft{eq2:picard:contraction0}{\le} \Cpic\,\enorm{u_\ell^{\k,\star}-u_\ell^{\k-1,\j}}  +  \enorm{u_\ell^{\k,\star}-u_\ell^{\k,\j}}                              \\
		 & \quad\le (1 + \Cpic) \enorm{u_\ell^{\k,\star}-u_\ell^{\k,\j}}  +  \Cpic\, \enorm{u_\ell^{\k,\j}-u_\ell^{\k-1,\j}}                                                        \\
		 & \quad\reffleft{eq2:pcg:contraction}{\le} (1 + \Cpic)\Cpcg \enorm{u_\ell^{\k,\j}-u_\ell^{\k,\j-1}}  +  \Cpic\, \enorm{u_\ell^{\k,\j}-u_\ell^{\k-1,\j}}                    \\
		 & \quad\reffleft{eq:pcg:stopping}{\le} (1 + \Cpic)\Cpcg\lpcg\,\eta_\ell(u_\ell^{\k,\j})  +  \big[\Cpic + (1+\Cpic)\Cpcg\lpcg\big]  \enorm{u_\ell^{\k,\j}-u_\ell^{\k-1,\j}} \\
		 & \quad\reffleft{eq:picard:stopping}{\le} \Big((1 + \Cpic)\Cpcg\lpcg + \big[\Cpic + (1+\Cpic)\Cpcg\lpcg\big]\,\lpic\Big)\eta_\ell(u_\ell^{\k,\j})                          \\
		 & \quad \reffleft{eq:opt:theta'}{=} \Cstab^{-1}\Big(\theta'\big(1-\lpic/\lpic^\star\big)  -  \theta\Big)\eta_\ell(u_\ell^{\k,\j}).
	\end{align*}

	{\bf Step~3.}\quad
	Let $\RR_\ell\subseteq\TT_\ell$ be the subset from Lemma~\ref{lemma:comparison} with $\theta'$ from~\eqref{eq:opt:theta'}. From Step~3, we obtain that
	\begin{align}\label{eq:opt_conv:aux1}
		\begin{split}
			\eta_\ell(\RR_\ell,u_\ell^\star) \,&\reffleft{axiom:stability}{\le}
			\eta_\ell(\RR_\ell,u_\ell^{\k,\j})  +  \Cstab\enorm{u_\ell^\star-u_\ell^{\k,\j}}\\
			&\le \eta_\ell(\RR_\ell,u_\ell^{\k,\j})  +  \Big(\theta'\big(1-\lpic/\lpic^\star\big)  -  \theta\Big)\eta_\ell(u_\ell^{\k,\j}).
		\end{split}
	\end{align}
	With the equivalence~\eqref{eq:eta-star}, Lemma~\ref{lemma:comparison}, and estimate~\eqref{eq:opt_conv:aux1}, we see that
	\begin{align*}
		 & \theta'\big(1-\lpic/\lpic^\star\big)\eta_\ell(u_\ell^{\k,\j})  \reff{eq:eta-star}{\le}  \theta'\eta_\ell(u_\ell^\star)  \reff{eq:comparison_lemma}{\le}  \eta_\ell(\RR_\ell,u_\ell^\star) \\
		 & \qquad\reff{eq:opt_conv:aux1}\le  \eta_\ell(\RR_\ell,u_\ell^{\k,\j})  +  \Big(\theta'\big(1-\lpic/\lpic^\star\big)  -  \theta\Big)\eta_\ell(u_\ell^{\k,\j}).
	\end{align*}
	Thus, we are led to
	\begin{align*}
		\theta\,\eta_\ell(u_\ell^{\k,\j})  \le  \eta_\ell(\RR_\ell,u_\ell^{\k,\j}).
	\end{align*}
	Hence, $\RR_\ell$ satisfies the D\"orfler marking criterion~\eqref{eq:doerfler} used in Algorithm~\ref{algorithm}. By the (quasi-)minimality of $\MM_\ell$ in~\eqref{eq:doerfler}, we infer that
	\begin{align*}
		\#\MM_\ell  \lesssim  \#\RR_\ell  \reff{eq:comparison_lemma0}{\lesssim}  \norm{u^\star}{\mathbb{A}_s}^{1/s}\,\eta_\ell(u_\ell^\star)^{-1/s}  \reff{eq:eta-star}{\simeq}  \norm{u^\star}{\mathbb{A}_s}^{1/s}\eta_\ell(u_\ell^{\k,\j})^{-1/s}.
	\end{align*}
	Recall from \eqref{eq:init:*} that $u_{\ell+1}^{0,\j}=u_\ell^{\k,\j}$. Thus, \eqref{eq3:ell-k-j} and the equivalence~\eqref{eq:Alpha-Delta} lead to
	\begin{align*}
		\eta_\ell(u_\ell^{\k,\j})^{-1/s}  \reff{eq3:ell-k-j}{\lesssim}  (\Alpha_{\ell+1}^{0,\j})^{-1/s} \reff{eq:Alpha-Delta}{\simeq}  (\Delta_{\ell+1}^{0,\j})^{-1/s}.
	\end{align*}
	Overall, we end up with
	\begin{align}\label{eq:opt_conv:aux2}
		\#\MM_\ell  \lesssim  \norm{u^\star}{\mathbb{A}_s}^{1/s}(\Delta_{\ell+1}^{0,\j})^{-1/s} \quad\text{for all }(\ell+1,0,0)\in\QQ.
	\end{align}
	The hidden constant depends only on $\Cstab$, $\Crel$, $\Cmark$, $1-\lpic/\lpic^\star$, $\Ccea=L/\alpha$, $\Crel'$ and $s$.

	{\bf Step~4.}\quad
	For $(\ell,k,j)\in\QQ$ such that $(\ell+1,0,0)\in\QQ$ and such that $\TT_{\ell}\neq\TT_0$, Step~4 and the closure estimate~\eqref{axiom:closure} lead to
	\begin{align*}
		\#\TT_\ell - \#\TT_0 + 1 & \simeq  \#\TT_\ell - \#\TT_0  \reff{axiom:closure}{\lesssim}  \sum_{\widetilde\ell=0}^{\ell-1}\#\MM_{\widetilde\ell}  \reff{eq:opt_conv:aux2}{\lesssim}  \norm{u^\star}{\mathbb{A}_s}^{1/s} \sum_{\widetilde\ell=0}^{\ell}(\Delta_{\widetilde\ell}^{0,\j})^{-1/s} \\
		                         & \le \norm{u^\star}{\mathbb{A}_s}^{1/s} \sum_{\substack{(\widetilde\ell,\widetilde{k},\widetilde{j})\in\QQ                                                                                                                                                         \\ (\widetilde\ell,\widetilde{k},\widetilde{j}) \le (\ell,k,j)}}(\Delta_{\widetilde{\ell}}^{\widetilde{k},\widetilde{j}})^{-1/s},
	\end{align*}
	where the hidden constant depends only on the constant of~\eqref{eq:opt_conv:aux2} and additionally on $\Cmesh$. Replacing $\norm{u^\star}{\mathbb{A}_s}$ with $\max\{\norm{u^\star}{\mathbb{A}_s},\Delta_0^{0,0}\}$, the overall estimate trivially holds for $\TT_\ell=\TT_0$. We thus get with linear convergence~\eqref{eq:linconv} and the geometric series (i.e., $\sum_{n=0}^\infty \qlin^n = 1/(1-\qlin)\lesssim 1$) that
	\begin{align}\label{eq:opt_conv:aux3}
		\begin{split}
			\#\TT_\ell - \#\TT_0 + 1
			&\lesssim  \max\{\norm{u^\star}{\mathbb{A}_s},\Delta_0^{0,0}\}^{1/s} \sum_{\substack{(\widetilde\ell,\widetilde{k},\widetilde{j})\in\QQ \\ (\widetilde\ell,\widetilde{k},\widetilde{j}) \le (\ell,k,j)}}(\Delta_{\widetilde{\ell}}^{\widetilde{k},\widetilde{j}})^{-1/s}\\
			&\reffleft{eq:linconv}{\lesssim}  \max\{\norm{u^\star}{\mathbb{A}_s},\Delta_0^{0,0}\}^{1/s} 	(\Delta_{\ell}^{k,j})^{-1/s} 	\sum_{\substack{(\widetilde\ell,\widetilde{k},\widetilde{j})\in\QQ \\ (\widetilde\ell,\widetilde{k},\widetilde{j}) \le (\ell,k,j)}}\qlin^{|(\ell,k,j)|-|(\widetilde{\ell},\widetilde{k},\widetilde{j})|}\\
			&\lesssim  \max\{\norm{u^\star}{\mathbb{A}_s},\Delta_0^{0,0}\}^{1/s} (\Delta_{\ell}^{k,j})^{-1/s},
		\end{split}
	\end{align}
	where the hidden constant depends only on $\Cstab$, $\Crel$, $\Cmark$, $1-\lpic/\lpic^\star$, $\Ccea=L/\alpha$, $\Crel'$, $\Cmesh$, $\Clin$, $\qlin$, and $s$. This proves that
	\begin{align}\label{eq_opt_conv_int}
		(\#\TT_{\ell} - \#\TT_0 + 1 )^s \Delta_{\ell}^{k,j}
		\lesssim \max\{\norm{u^\star}{\mathbb{A}_s},\Delta_0^{0,0}\}.
	\end{align}
	when $(\ell+1,0,0)\in\QQ$ and $\ell \geq 0$ as well as
	\begin{align}
		(\#\TT_{\ell} - \#\TT_0 + 1 )^s \Delta_{\ell}^{k,j}
		\lesssim \norm{u^\star}{\mathbb{A}_s}.	
	\end{align}
 when $(\ell+1,0,0)\in\QQ$ and $\ell\ge 1$.
 
	Let now $(\ell,k,j)\in\QQ$ with $\ell \geq 2$ but $(\ell+1,0,0)\not\in\QQ$, i.e., $\ell = \underline\ell < \infty$ and one of the cases discussed in detail in Step~1 of Lemma~\ref{lemma:opt_conv:lower_bound} arises. Since $\ell-1 \geq 1$ and $(\ell,0,0)\in\QQ$, \eqref{eq_opt_conv_int} shows that
	\begin{align*}
		(\#\TT_{\ell-1} - \#\TT_0 + 1 )^s \Delta_{\ell-1}^{\k,\j}
		\lesssim \norm{u^\star}{\mathbb{A}_s}.
	\end{align*}
	Moreover, Lemma~\ref{lemma:ell-k-j} leads to $\Delta_{\ell}^{k,j} \lesssim \Delta_{\ell-1}^{\k,\j}$. Therefore, we obtain from~\eqref{eq:upper bound for As2} that
	\begin{align}\label{eq_opt_conv_bis}
		\#\TT_{\ell} - \#\TT_0 + 1 \leq \Cson \#\TT_0 (\#\TT_{\ell-1} - \#\TT_0 + 1).
	\end{align}
	Altogether, \eqref{eq_opt_conv_int} holds for this case as well.

	As the next case, if $(\ell,k,j)\in\QQ$ with $\ell = \underline\ell = 1$, we can rely on the inequality
	\begin{align}\label{eq_opt_conv_quater}\begin{split}
			(\#\TT_{1} & - \#\TT_0 + 1 )^s \Delta_{1}^{k,j}
			\reff{eq_opt_conv_bis}{\leq} \Cson (\#\TT_0)\, \Delta_{1}^{k,j} \reff{eq_ell-k-j}{\lesssim} \Delta_{0}^{\k,\j}\\
			&\reff{eq:def:Delta}{=} \enorm{u^\star - u_0^{\k,\j}} + \enorm{u_0^{\k,\star} - u_0^{\k,\j}} + \eta_0(u_0^{\k,\j})\\
			& \reff{eq2:pcg:contraction}{\lesssim}
			\enorm{u^\star - u_0^\star} + \enorm{u_0^\star - u_0^{\k,\j}} + \enorm{u_0^{\k,\j} - u_0^{\k,\j-1}} + \eta_0(u_0^{\k,\j}) \\
			&\reff{eq:st_crit_pcg}{\lesssim} \enorm{u^\star - u_0^\star} + \enorm{u_0^\star - u_0^{\k,\j}} + \enorm{u_0^{\k,\j} - u_0^{\k-1,\j}} + \eta_0(u_0^{\k,\j})\\
			&\reff{eq2:picard:contraction}{\lesssim} \enorm{u^\star - u_0^\star} + \enorm{u_0^{\k,\j} - u_0^{\k-1,\j}} + \eta_0(u_0^{\k,\j})\\
			& \reff{eq:st_crit_pic}{\lesssim} \enorm{u^\star - u_0^\star} + \eta_0(u_0^{\k,\j})
			\reff{eq:eta-star}\lesssim \enorm{u^\star - u_0^\star} + \eta_0(u_0^\star)\le \norm{u^\star}{\A_s}.
		\end{split}\end{align}
	Thus, \eqref{eq_opt_conv_int} holds for this case as well.

As the final case, if $(\ell,k,j)\in\QQ$ with $\ell=\underline\ell=0$, we get with the linear convergence~\eqref{eq:linconv} that
\begin{align}
\Delta_0^{k,j}\reff{eq:linconv}{\lesssim}\Delta_0^{0,0}.
\end{align}
Hence, \eqref{eq_opt_conv_int} also holds for this case, and we conclude the proof of~\eqref{eq:opt_rate}

\end{proof}

\section{Proof of Theorem~\ref{theorem:cost} (optimal decay rate wrt.\ computational cost)}
\label{section:optimal_cost}

\begin{proof}[{\bfseries Proof of Theorem~\ref{theorem:cost}}]

Note that $\# \TT_{\ell'}-\#\TT_0 + 1 = 1 \leq \# \TT_0$ for $\ell'=0$ and $\# \TT_{\ell'}-\#\TT_0 + 1 \leq \# \TT_\ell'$ for $\ell'>0$, so that the left inequality in~\eqref{eq:opt_cost} immediately follows from the left inequality in~\eqref{eq:opt_rate}. In order to prove the right inequality in~\eqref{eq:opt_cost}, let $(\ell',k',j')\in\QQ$. Employing~\eqref{eq_opt_conv_int} from Step~5 of the proof of Theorem~\ref{theorem:rate}, the geometric series proves that
\begin{align*}
\sum_{\substack{(\ell,k,j)\in\QQ \\ (\ell,k,j) \le (\ell',k',j')}} \# \TT_\ell
		&\reff{eq:TT0}{\le} \#\TT_0  \sum_{\substack{(\ell,k,j)\in\QQ \\ (\ell,k,j) \le (\ell',k',j')}} (\# \TT_\ell-\#\TT_0 + 1)\\
		&\reff{eq_opt_conv_int}{\le} \#\TT_0	\,		\Copt^{1/s}		\,		 \max\{\norm{u^\star}{\mathbb{A}_s},\Delta_0^{0,0}\}^{1/s} \sum_{\substack{(\ell,k,j)\in\QQ \\ (\ell,k,j) \le (\ell',k',j')}} (\Delta_{\ell}^{k,j})^{-1/s}\\
		&\reff{eq:linconv}{\le} \#\TT_0	\,\Copt^{1/s}\,		\Clin^{1/s}\,		\frac{1}{1-\qlin^{1/s}}\,\max\{\norm{u^\star}{\mathbb{A}_s},\Delta_0^{0,0}\}^{1/s}(\Delta_{\ell'}^{k',j'})^{-1/s}.
\end{align*}
Rearranging this estimate, we end up with
\begin{align*}
\sup_{(\ell',k',j') \in \QQ} \bigg( \sum_{\substack{(\ell,k,j)\in\QQ, \, \ell \geq 1 \\ (\ell,k,j) \le (\ell',k',j')}} \# \TT_\ell \bigg)^s \Delta_{\ell'}^{k',j'}
				\lesssim \max\{\norm{u^\star}{\mathbb{A}_s},\Delta_0^{0,0}\},
\end{align*}
where the hidden constant depends only on $\Cstab$, $\Crel$, $\Cmark$, $1-\lpic/\lpic^\star$, $\Ccea=L/\alpha$, $\Crel'$, $\Cmesh$, $\Clin$, $\qlin$, $\#\TT_0$, and $s$. This proves the right inequality in~\eqref{eq:opt_cost}. 
\end{proof}

\section{Numerical experiments}
\label{section:num_exp}

In this section, we present numerical experiments in 2D to underpin our theoretical findings. We compare the performance of Algorithm~\ref{algorithm} for 
\begin{itemize}
	\item different values of $\lpcg \in \{10^{-1},10^{-2},10^{-3},10^{-4}\}$,
	\item different values of $\lpic \in \{1, 10^{-1},10^{-2},10^{-3},10^{-4}\}$,
	\item different values of $\theta \in \{0.1, 0.3,0.5,0.7,0.9,1\}$,
\end{itemize}
As model problems serve nonlinear boundary value problems which arise, e.g., from nonlinear material laws in magnetostatic computations, where the mesh-refinement is steered by newest vertex bisection.

As an algebraic solver for the linear problems arising from the Banach--Picard iteration, we use PCG with multilevel additive Schwarz preconditioner from~\cite[Section~7.4.1]{dissFuehrer} which is an optimal preconditioner, i.e., the condition number of the preconditioned system is uniformly bounded; cf. also~\cite[Section 2.9]{banach2}.

\subsection{Model problem}
\label{sec:numerics_model_problem}
With $\d \ge 2$, let $\Omega \subset \R^d$ be a bounded Lipschitz domain with polytopal boundary $\Gamma = \partial \Omega$. We suppose that the boundary $\Gamma$ is split into relatively open and disjoint Dirichlet and Neumann boundaries $\Gamma_D, \Gamma_N\subseteq\Gamma$ with $|\Gamma_D|>0$, i.e., $\Gamma=\overline\Gamma_D\cup\overline\Gamma_N$. While the numerical experiments in Section~\ref{section:example_known}--\ref{section:example_unknown} only consider $d=2$, we stress that the following model problem is covered by the abstract theory for any $d\ge 2$.
For a given right-hand side $f\in L^2(\Omega)$ and $g\in L^2(\Gamma)$, it reads as follows:
\begin{eqnarray}\label{eq:example}
\begin{split}
- \div( \mu(x,|\nabla u^\star(x)|^2) \nabla u^\star(x) ) &= f(x) \quad &\textrm{in }& \Omega, \\
u^\star(x) &= 0  &\textrm{on }& \Gamma_D,\\
\mu(x,|\nabla u^\star(x)|^2)\,\partial_{\bf n} u^\star(x) &= g(x)  &\textrm{on }& \Gamma_N,
\end{split}
\end{eqnarray}
where the scalar nonlinearity $\mu: \Omega \times \R_{\geq 0} \rightarrow \R$ satisfies the following properties~\eqref{item:mu_bounded}--\eqref{item:mu_lipschitz1}, similarly considered in~\cite{gmz2012,banach}:

\renewcommand{\theenumi}{M\arabic{enumi}}
\begin{enumerate}
 \bf
 \item \label{item:mu_bounded}
 \rm
 There exist constants $0<\gamma_1<\gamma_2<\infty$ such that
 \begin{align}\label{eq:mu_bounded}
 \gamma_1 \le \mu(x,t) \le \gamma_2 \quad \textrm{for all } x \in \Omega \text{ and all } t \geq 0.
 \end{align}
 
 \bf
 \item \label{item:mu_differentiable}
 \rm
There holds $\mu(x,\cdot) \in C^1(\R_{\geq 0} ,\R)$ for all $x \in \Omega$, and there exist constants $0 < \widetilde{\gamma}_1<\widetilde{\gamma}_2<\infty$ such that
 \begin{align}\label{eq:mu_differentiable}
 \widetilde{\gamma}_1 \le \mu(x,t) +2 t  \frac{\d{}}{\d{t}} \mu(x,t) \le \widetilde{\gamma}_2 \quad \textrm{for all } x \in \Omega \text{ and all } t \geq 0.
 \end{align}
  \bf
 \item \label{item:mu_lipschitz}
 \rm
 Lipschitz continuity of $\mu(x,t)$ in $x$, i.e., there exists a constant $L_{\mu}>0$ such that
 \begin{align}\label{eq:mu_lipschitz}
 | \mu(x,t) - \mu(y,t) | \le L_{\mu} | x - y | \quad \textrm{for all } x,y \in \Omega \text{ and all } t \geq 0.
 \end{align}
 
 \bf
 \item \label{item:mu_lipschitz1}
 \rm
Lipschitz continuity of $t \frac{\d{}}{\d{t}} \mu(x,t)$ in $x$, i.e., there exists a constant $\widetilde{L}_{\mu}>0$ such that
 \begin{align}\label{eq:mu_lipschitz1}
 | t \frac{\d{}}{\d{t}} \mu(x,t) - t \frac{\d{}}{\d{t}} \mu(y,t) | \le \widetilde{L}_{\mu} | x - y | \quad \textrm{for all } x,y \in \Omega
 \text{ and all } t \geq 0.
\end{align}
\end{enumerate}

\subsection{Weak formulation}
\label{sec:numerics_weak_formulation}

The weak formulation of~\eqref{eq:example} reads as follows: Find $u \in H^1_D(\Omega):= \{w \in H^1(\Omega): \, w=0 \text{ on } \Gamma_D \}$ such that
\begin{align}\label{eq:weak_example}
\int_{\Omega} \mu(x,|\nabla  u^\star(x)|^2) \,\nabla u^\star\cdot \nabla v \d{x} = \int_{\Omega} f v \d{x} + \int_{\Gamma_N} g v \d{s}
 \quad \textrm{for all } v \in 
H^1_D(\Omega).
\end{align}
With respect to the abstract framework of Section~\ref{section:abstract}, we take $\HH = H^{1}_D(\Omega)$, $\K=\R$, $\product{\cdot}{\cdot}=\product{\nabla\cdot}{\nabla\cdot}$ with $\enorm{v}= \norm{\nabla v}{L^2(\Omega)}$. We obtain~\eqref{eq:exact_solution} with operators
\begin{subequations}
\begin{align}
\dual{\AA w}{v}_{\HH'\times\HH} &= \int_{\Omega} \mu(x,|\nabla w(x)|^2) \, \nabla w(x)\cdot\nabla v(x) \d{x},  \label{eq:AG} \\
F(v) &= \int_{\Omega} f v \d{x} + \int_{\Gamma_N} g v \d{s} \label{eq:F}
\end{align}
\end{subequations}
for all $v,w\in\HH$.
We recall from~ \cite[Proposition~8.2]{banach} that \eqref{item:mu_bounded}--\eqref{item:mu_differentiable} implies that $\AA$ is strongly monotone (with $\alpha:=\widetilde\gamma_1$) and Lipschitz continuous (with $L:=\widetilde\gamma_2$), so that \eqref{eq:example} fits into the setting of Section~\ref{section:abstract}. Moreover,~\eqref{item:mu_lipschitz}--\eqref{item:mu_lipschitz1} are required to prove the well-posedness and the properties~\eqref{axiom:stability}--\eqref{axiom:discrete_reliability} of the residual {\sl a~posteriori} error estimator. 


\subsection{Discretization and \textsl{a~posteriori} error estimator} \label{example:estimator}
Let $\mathcal{T}_0$ be a conforming initial triangulation of $\Omega$ into simplices $\TT\in\TT_0$.
For each $\TT_H\in\T$, consider the lowest-order FEM space
\begin{align}\label{eq:dpr:XX}
\HH_{H} := \set{ v\in C(\Omega)}{ v|_\Gamma=0 \textrm{ and }v|_{T} \in \mathcal{P}^1(T) \textrm{ for all } T \in \mathcal{T}_{H}}.
\end{align}

As in \cite[Section 3.2]{gmz2012}, we define for all  $T \in \mathcal{T}_{H}$ and all $v_H \in \HH_H$, the corresponding weighted residual error indicators
\begin{align}\label{eq:dpr:eta} 
\begin{split}\eta_{H}(T,v_{H})^2 &:= |T|^{2/d} \norm{f+ \div ( \mu(\cdot,|\nabla v_{H}|^2) \nabla v_{H} )}{L^2(T)}^2 \\
&\qquad+|T|^{1/d} \norm{ [( \mu(\cdot,|\nabla v_{H}|^2)\nabla v_{H}) \cdot\textbf{\textrm{n}}] }{{L^2(\partial T\cap\Omega)}^2},
\end{split}
\end{align}
where $[\cdot]$ denotes the usual jump of discrete functions across element interfaces, and \textbf{n} is the outer normal vector of the considered element.

Due to~\eqref{item:mu_lipschitz}, the error estimator is well-posed, since the nonlinearity $\mu(x,t)$ is Lipschitz continuous in $x$. Then, reliability~\eqref{axiom:reliability} and discrete reliability~\eqref{axiom:discrete_reliability} are proved as in the linear case; see, e.g.,~\cite{ckns2008} for the linear case or \cite[Theorem~3.3]{gmz2012} and \cite[Theorem 3.4]{gmz2012}, respectively, for strongly monotone nonlinearities.

The verification of stability~\eqref{axiom:stability} and reduction~\eqref{axiom:reduction} requires the validity of a certain inverse estimate. For scalar nonlinearities and under the assumptions~\eqref{item:mu_bounded}--\eqref{item:mu_lipschitz1}, the latter is proved in~{\cite[Lemma 3.7]{gmz2012}}. Using this inverse estimate, the proof of~\eqref{axiom:stability} and~\eqref{axiom:reduction} 
follows as for the linear case; see, e.g.,~\cite{ckns2008} for the linear case or \cite[Section~3.3]{gmz2012} for scalar nonlinearities. We note that the necessary inverse estimate is, in particular, open for non-scalar nonlinearities. In any case, the arising constants in~\eqref{axiom:stability}--\eqref{axiom:discrete_reliability} depend also on the uniform shape regularity of the triangulations generated by newest vertex bisection.

\begin{figure}
  \centering
  \raisebox{-0.5\height}{\includegraphics[width=0.4\textwidth]{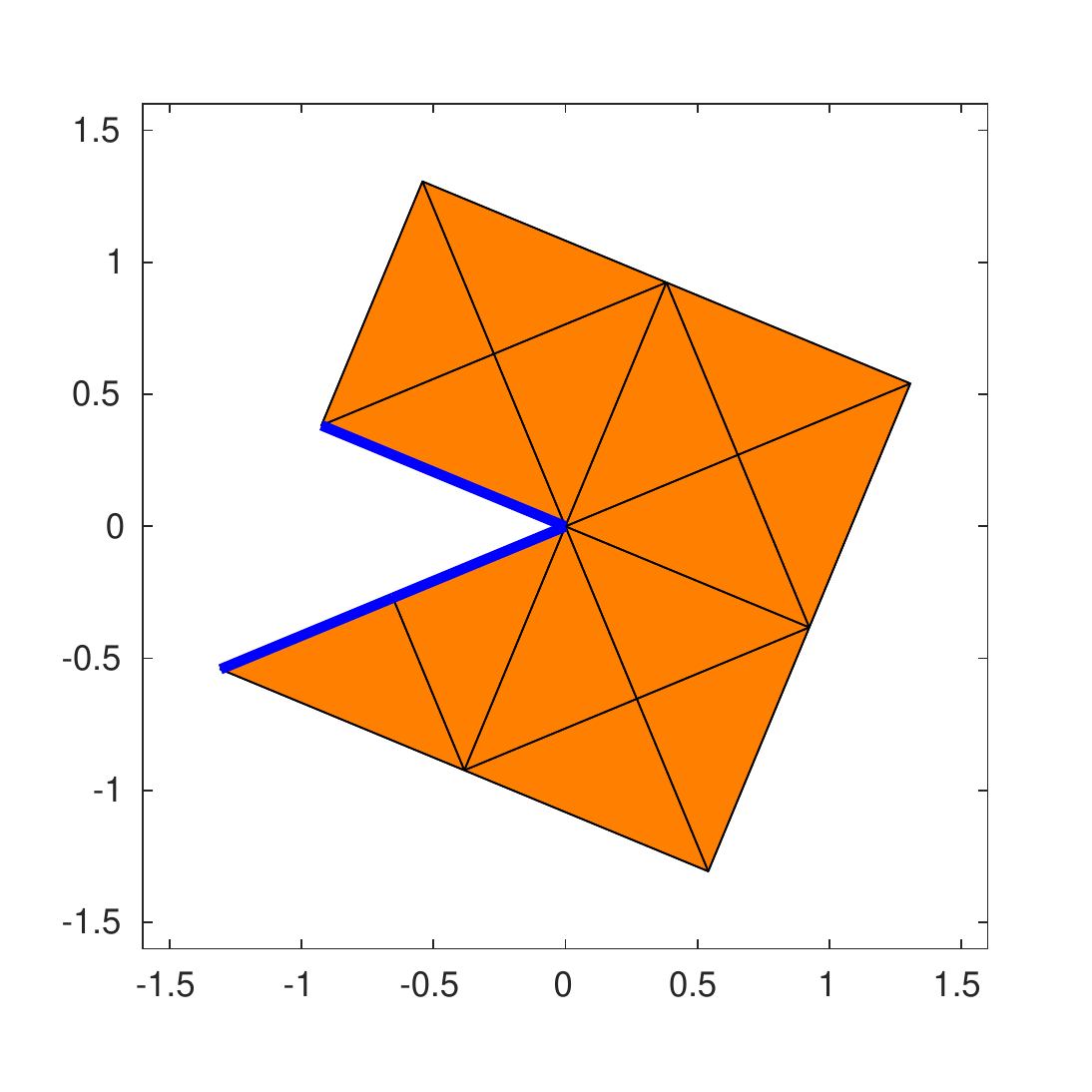}}
  \quad\quad
  \raisebox{-0.5\height}{\includegraphics[width=0.4\textwidth]{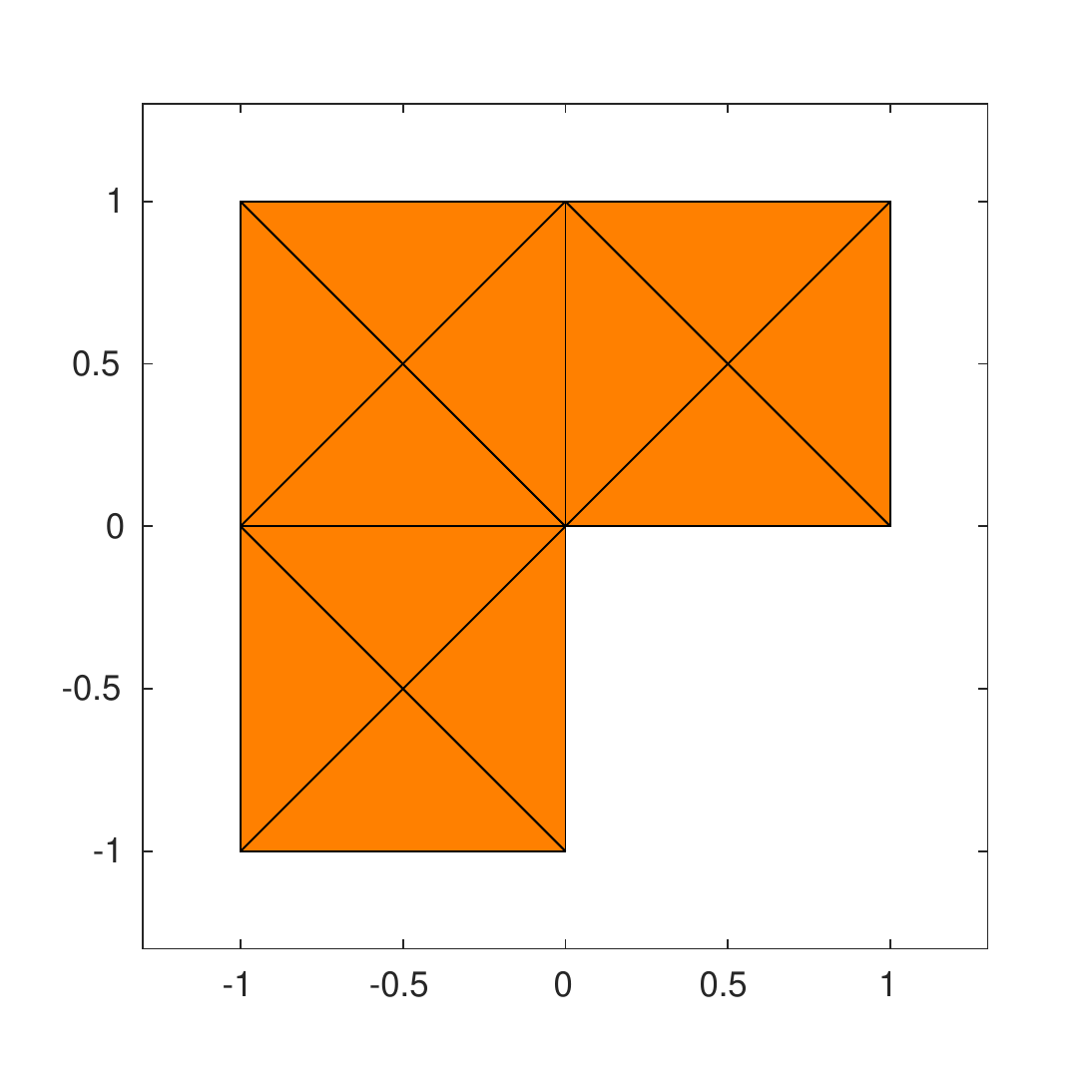}}
  \caption{$Z$-shaped domain $\Omega\subset\R^2$ with initial mesh $\TT_0$  and $\Gamma_D$ marked by a thick blue line (left) and $L$-shaped domain $\Omega\subset\R^2$ with initial mesh $\TT_0$ (right).}
  \label{fig:geometries}
\end{figure}

\subsection{Experiment with known solution}\label{section:example_known}
We consider the $Z$-shaped domain $\Omega\subset\R^2$ from Figure~\ref{fig:geometries} (left) with mixed boundary conditions and the nonlinear problem~\eqref{eq:example} with $\mu(x,|\nabla u^\star(x)|^2):=2+\frac{1}{\sqrt{1+|\nabla u^\star(x)|^2}}$. This leads to the bounds $\alpha=2$ and $L=3$ in~\eqref{def:assumptions_operator}. We prescribe the solution $u^\star$ in polar coordinates $(x,y)=r(\cos\phi,\sin\phi)$ with $\phi\in(-\pi,\pi)$ by
\begin{align}
u^\star(x,y)=r^\beta\cos(\beta\,\phi),
\end{align}
with $\beta=4/7$ and compute $f$ and $g$ in~\eqref{eq:example} accordingly. We note that $u^\star$ has a generic singularity at the re-entrant corner $(x,y)=(0,0)$.

In Figure~\ref{fig:conv_known}, we compare uniform mesh-refinement ($\theta=1$) to adaptive mesh-refinement ($0<\theta<1$) for different values of $\lpcg$ and $\lpic$. We plot the error estimator $\eta_\ell(u_\ell^{\k,\j})$ over the number of elements $N:=\#\TT_\ell$. First (top), we fix $\theta=0.5$, $\lpic=10^{-2}$, and choose $\lpcg\in\{10^{-1},10^{-2},10^{-3},10^{-4}\}$. We see that uniform mesh-refinement leads to the suboptimal rate of convergence $\OO(N^{-2/7})$, whereas Algorithm~\ref{algorithm} with adaptive mesh-refinement regains the optimal rate of convergence $\OO(N^{-1/2})$, independently of the actual choice of $\lpcg$. We observe the very same if we fix $\theta=0.5$, $\lpcg=10^{-2}$, and choose $\lpic\in\{1, 10^{-1},10^{-2},10^{-3},10^{-4}\}$ (middle), or, if we fix $\lpcg=\lpic=10^{-2}$ and vary $\theta\in\{0.1,0.3,0.5,0.7,0.9\}$ (bottom). Since we know from Proposition~\ref{proposition:reliability} and the estimate
\begin{align*}
 \enorm{u_{\ell}^{\k,\star} - u_{\ell}^{\k,\j}}
 		\reff{eq2:pcg:contraction}\lesssim  \enorm{u_{\ell}^{\k,\j} - u_{\ell}^{\k,\j-1}}
 		\reff{eq:pcg:stopping}\lesssim \eta_{\ell}(u_{\ell}^{\k,\j}) + \enorm{u_{\ell}^{\k,\j}-u_{\ell}^{\k-1,\j}}
 		\reff{eq:picard:stopping}\lesssim \eta_{\ell}(u_{\ell}^{\k,\j})
\end{align*}
that $\eta_\ell(u_\ell^{\k,\j})\simeq\Delta_\ell^{\k,\j}$, this empirically underpins Theorem~\ref{theorem:rate}.
\begin{figure}
	\centering
	\hspace*{-7mm}\includegraphics[width=\textwidth]{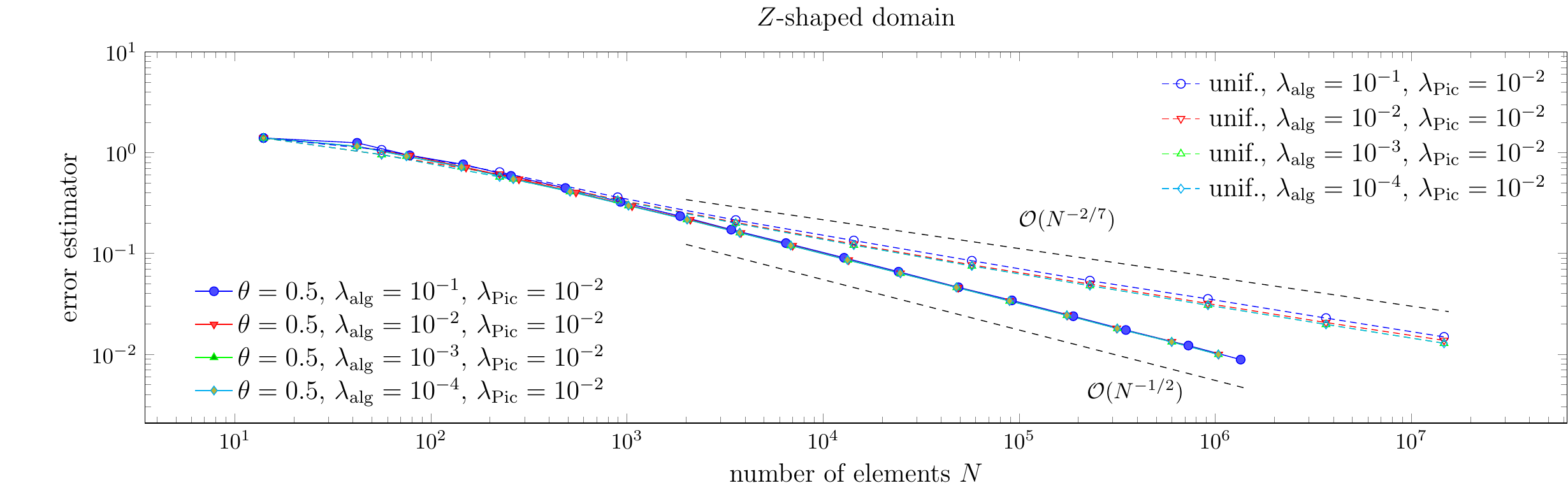}
	\vspace{0.2cm}\\%
	\hspace*{-7mm}\includegraphics[width=\textwidth]{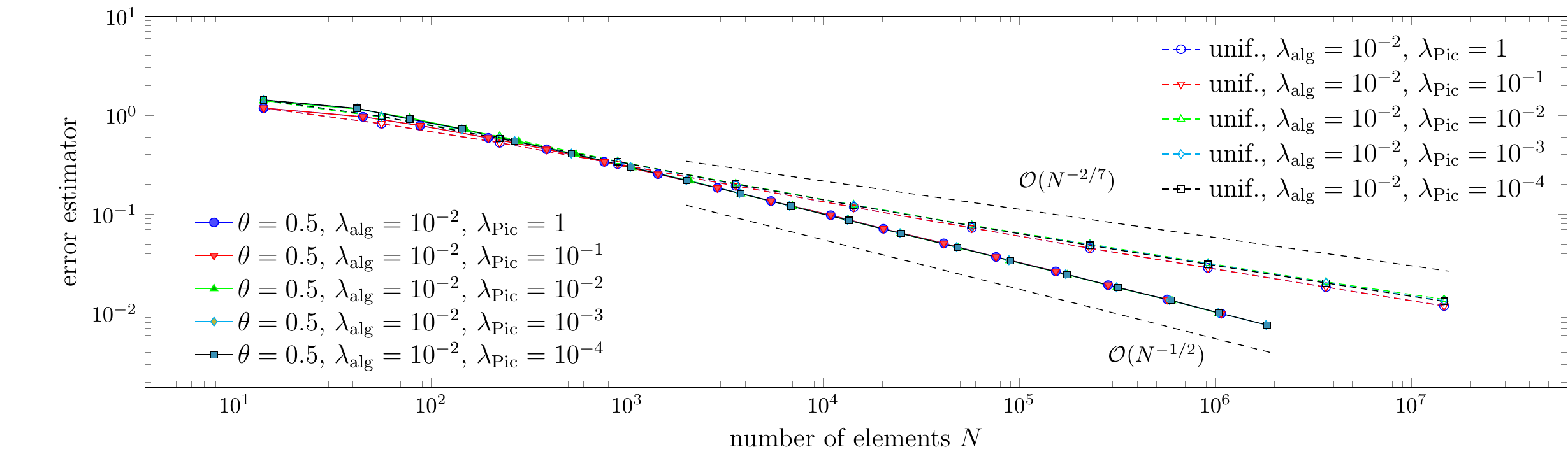}
	\vspace{0.2cm}\\%
	\hspace*{-7mm}\includegraphics[width=\textwidth]{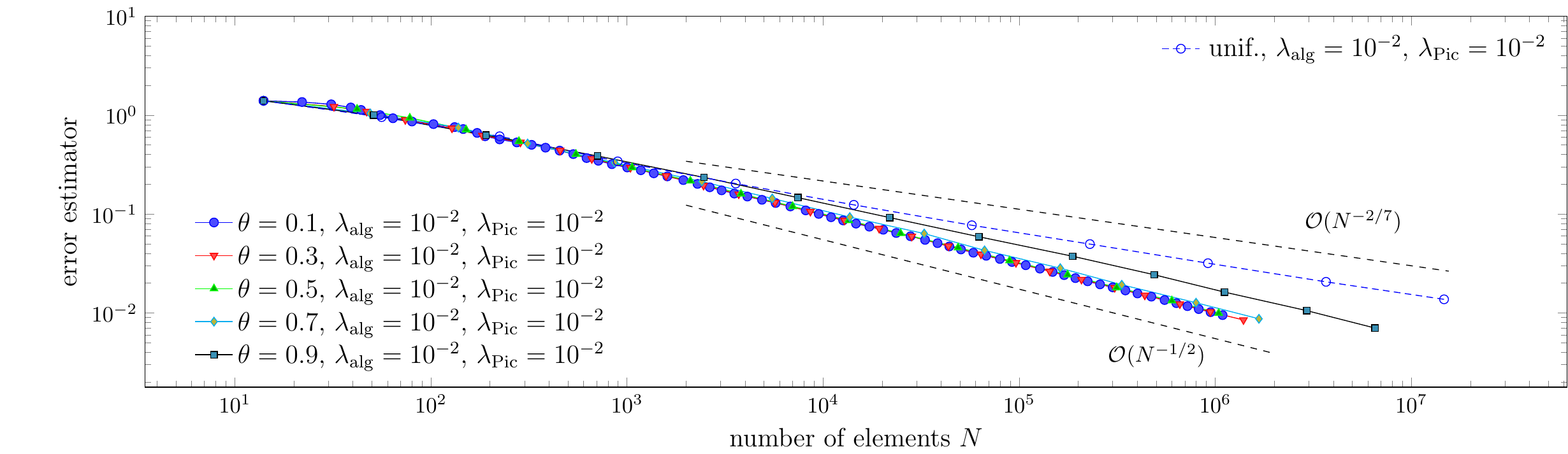}
	\caption{Example from Section~\ref{section:example_known}: Error estimator $\eta_\ell(u_\ell^{\k,\j})$ with respect to the number of elements $N:=\#\TT_\ell$ for $\theta=0.5$, $\lpic=10^{-2}$, and $\lpcg\in\{10^{-1},\ldots,10^{-4}\}$ (top), for $\theta=0.5$, $\lpcg=10^{-2}$, and $\lpic\in\{1,10^{-1},\ldots,10^{-4}\}$ (middle), as well as for $\lpcg=\lpic=10^{-2}$ and $\theta\in\{0.1, 0.3, \ldots, 0.9\}$ (bottom).}
\label{fig:conv_known}
\end{figure}

In Figure~\ref{fig:compl_known}, analogously to Figure~\ref{fig:conv_known}, we choose different combinations of $\theta$, $\lpcg$, and $\lpic$. We plot the error estimator $\eta_{\ell'}(u_{\ell'}^{\k',\j'})$ over the cumulative sum $\sum_{(\ell,k,j)\le(\ell',\k',\j')}\#\TT_{\ell}$. Again, independently of the choice of $\theta\in\{0.1,0.3,0.5,0.7,0.9\}$, $\lpcg \in \{10^{-1},10^{-2},10^{-3},$ $10^{-4}\}$ and $\lpic\in\{1, 10^{-1},$ $10^{-2},10^{-3},10^{-4}\}$, we observe the optimal order of convergence $\OO(N^{-1/2})$ with respect to the computational complexity in accordance with Theorem~\ref{theorem:cost}.

\begin{figure}
	\centering
	\hspace*{-7mm}\includegraphics[width=\textwidth]{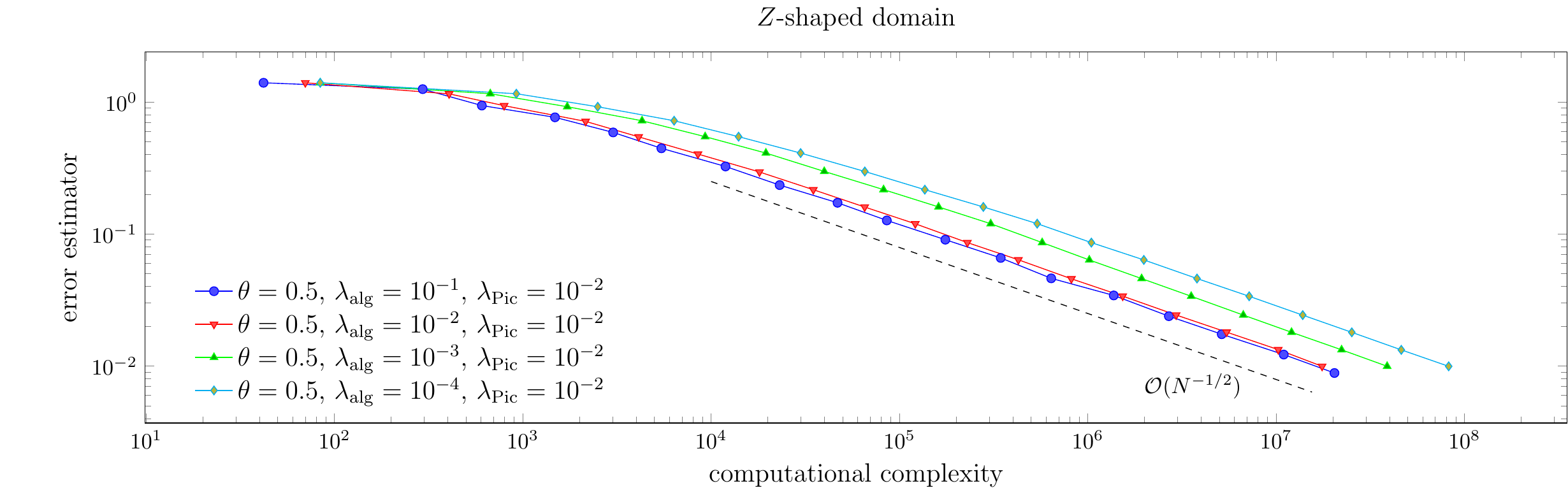}
	\vspace{0.2cm}\\%
	\hspace*{-7mm}\includegraphics[width=\textwidth]{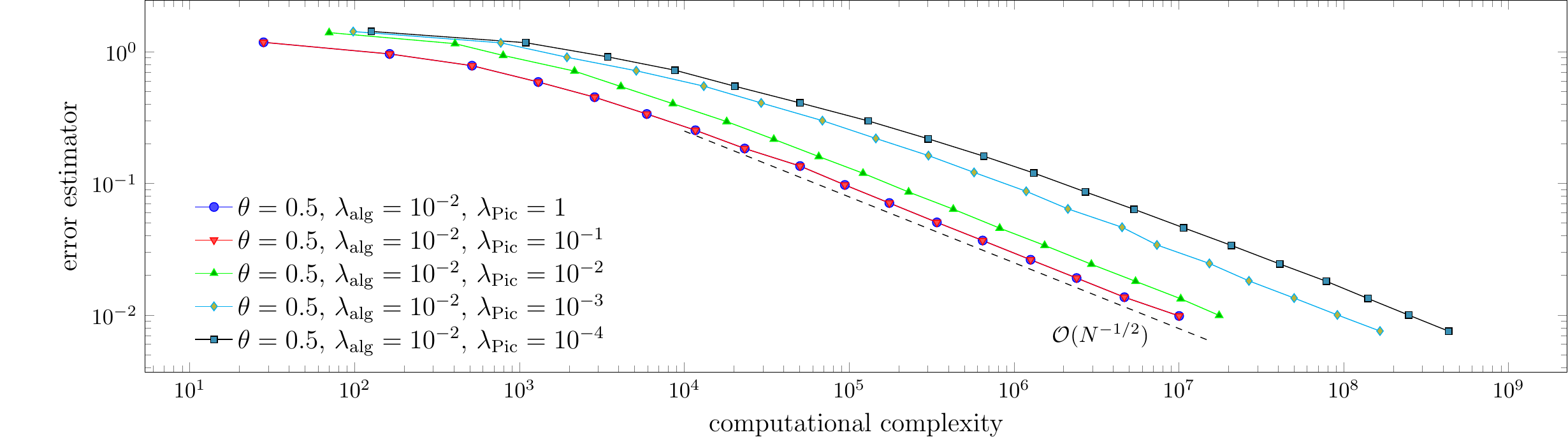}
	\vspace{0.2cm}\\%
	\hspace*{-7mm}\includegraphics[width=\textwidth]{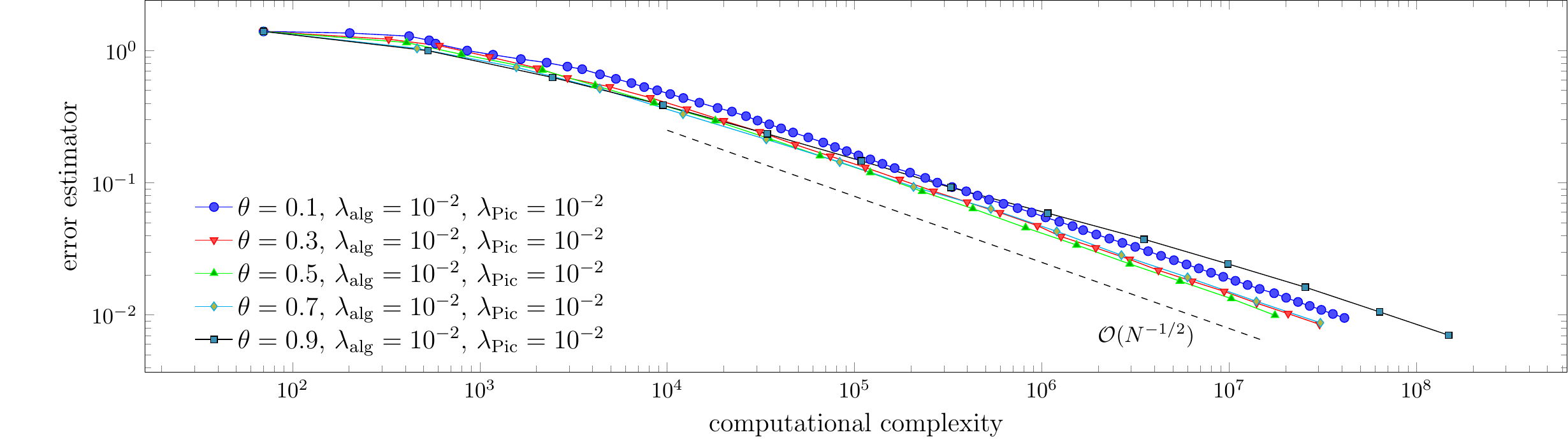}
	\caption{Example from Section~\ref{section:example_known}: Error estimator $\eta_{\ell'}(u_{\ell'}^{\k',\j'})$ with respect to the cumulative sum $\sum_{(\ell,k,j)\le(\ell',\k',\j')}\#\TT_{\ell}$ for $\theta=0.5$, $\lpic=10^{-2}$, and $\lpcg\in \{10^{-1},\ldots,10^{-4}\}$ (top), for $\theta=0.5$, $\lpcg=10^{-2}$, and $\lpic\in\{1,10^{-1},\ldots,10^{-4}\}$ (middle), as well as for $\lpcg=\lpic=10^{-2}$ and $\theta\in\{0.1, 0.3, \ldots, 0.9\}$ (bottom).}
\label{fig:compl_known}
\end{figure}

In Figure~\ref{fig:nIt_known}, we consider the total number of PCG iterations cumulated over all Picard steps on the given mesh for different combinations of $\theta$, $\lpcg$, and $\lpic$. We observe that independently of the choice of these parameters, the total number of PCG iterations stays uniformely bounded. Additionally, we see that for larger values of $\lpcg$ and $\lpic$, as well as for smaller values of $\theta$, the total number of PCG iterations is smaller.

\begin{figure}
	\centering
	\hspace*{-7mm}\includegraphics[width=\textwidth]{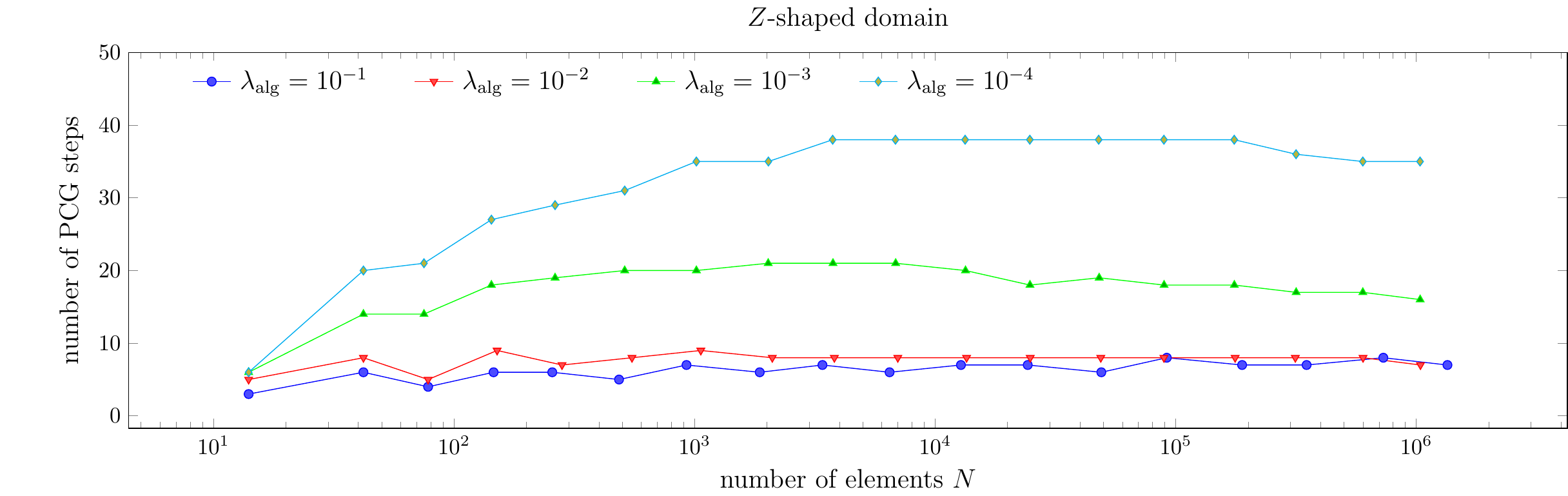}%
	\vspace{0.2cm}\\%
	\hspace*{-7mm}\includegraphics[width=\textwidth]{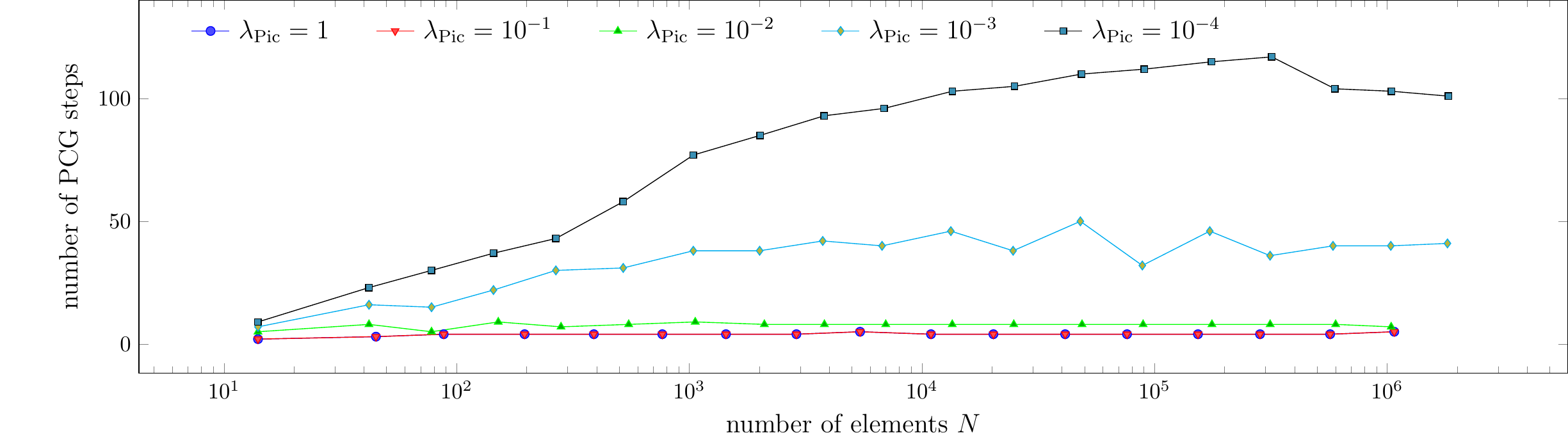}%
	\vspace{0.2cm}\\%
	\hspace*{-7mm}\includegraphics[width=\textwidth]{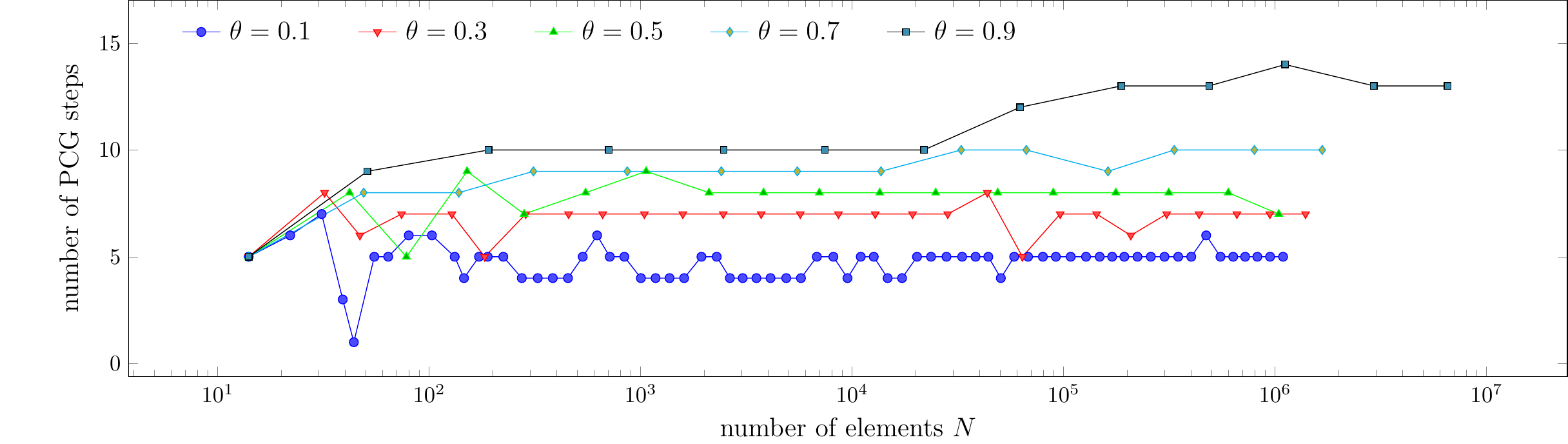}%
	\caption{Example from Section~\ref{section:example_known}: Number of algebraic solver iterations with respect to the number of elements $N:=\#\TT_\ell$ for $\theta=0.5$, $\lpic=10^{-2}$, and $\lpcg\in \{10^{-1},\ldots,10^{-4}\}$ (top), for $\theta=0.5$, $\lpcg=10^{-2}$, and $\lpic\in\{1,10^{-1},\ldots,10^{-4}\}$ (middle), as well as for $\lpcg=\lpic=10^{-2}$ and $\theta\in\{0.1, 0.3, \ldots, 0.9\}$ (bottom).}
\label{fig:nIt_known}
\end{figure}

\subsection{Experiment with unknown solution} \label{section:example_unknown}
We consider the $L$-shaped domain $\Omega\subset\R^2$ from Figure~\ref{fig:geometries} (right) and the nonlinear problem~\eqref{eq:example} with $f(x)=1$ and $\mu(x,|\nabla u^\star(x)|^2)$ $:=1+\frac{\ln(1+|\nabla u^\star|^2)}{1+|\nabla u^\star|^2}$. Then,~\eqref{item:mu_bounded}--\eqref{item:mu_lipschitz1} hold with $\alpha\approx0.9582898$ and $L\approx 1.5423438$.

In Figure~\ref{fig:conv_unknown}, we compare Algorithm~\ref{algorithm} for different values of $\theta$, $\lpcg$, and $\lpic$. As in Section~\ref{section:example_known}, we vary $\theta\in\{0.1, 0.3,0.5,0.7,0.9,1\}$, $\lpcg \in \{10^{-1},$ $10^{-2},10^{-3},10^{-4}\}$, and $\lpic\in\{1, 10^{-1},$ $10^{-2},10^{-3},10^{-4}\}$. We plot the error estimator $\eta_\ell(u_\ell^{\k,\j})$ over the the number of elements $N:=\#\TT_\ell$. Uniform mesh-refinement leads to the suboptimal rate of convergence $\OO(N^{-1/3})$, whereas Algorithm~\ref{algorithm} with adaptive mesh-refinement regains the optimal rate of convergence $\OO(N^{-1/2})$. Again, this empirically confirms Theorem~\ref{theorem:rate}. The latter rate of convergence appears to be even robust with respect to $\theta$, $\lpcg$, and $\lpic$.
\begin{figure}
	\centering
	\hspace*{-7mm}\includegraphics[width=\textwidth]{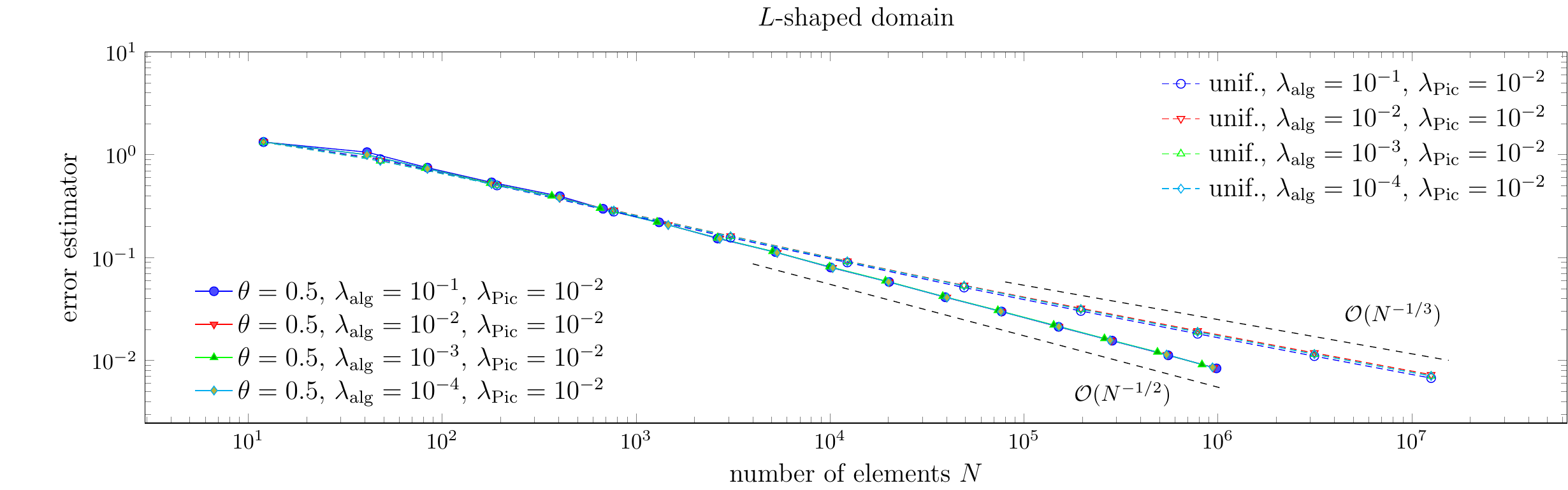}
	\vspace{0.2cm}\\%
	\hspace*{-7mm}\includegraphics[width=\textwidth]{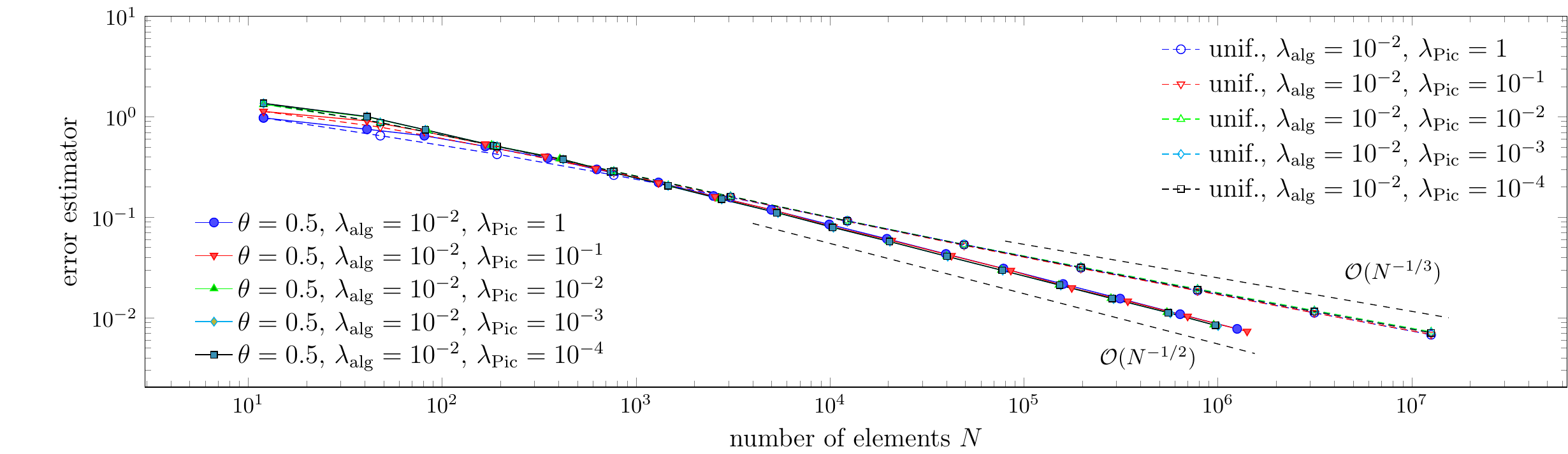}
	\vspace{0.2cm}\\%
	\hspace*{-7mm}\includegraphics[width=\textwidth]{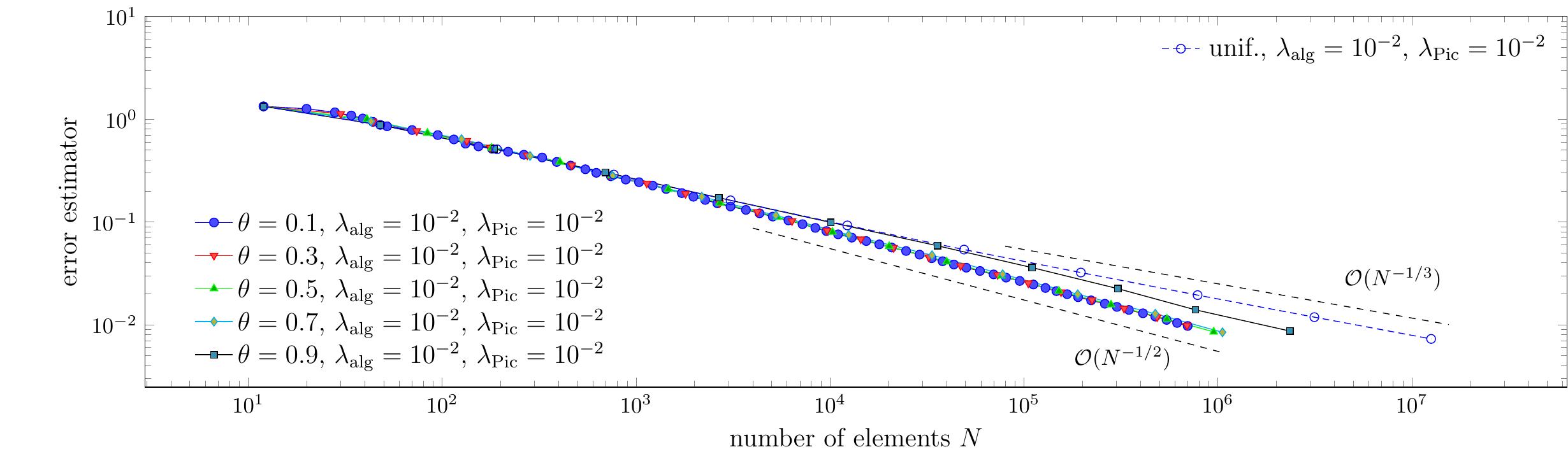}
	\caption{Example from Section~\ref{section:example_unknown}: Error estimator $\eta_\ell(u_\ell^{\k,\j})$ with respect to the number of elements $N:=\#\TT_\ell$ for $\theta=0.5$, $\lpic=10^{-2}$, and $\lpcg\in\{10^{-1},\ldots,10^{-4}\}$ (top), for $\theta=0.5$, $\lpcg=10^{-2}$, and $\lpic\in\{1,10^{-1},\ldots,10^{-4}\}$ (middle), as well as for $\lpcg=\lpic=10^{-2}$ and $\theta\in\{0.1, 0.3, \ldots, 0.9\}$ (bottom).}
\label{fig:conv_unknown}
\end{figure}		

In Figure~\ref{fig:compl_unknown}, we again choose different combinations of $\theta$, $\lpcg$, and $\lpic$. We plot the error estimator $\eta_{\ell'}(u_{\ell'}^{\k',\j'})$ over the cumulative sum $\sum_{(\ell,k,j)\le(\ell',\k',\j')}\#\TT_{\ell}$. Independently of the choice of the parameters, we observe the optimal order of convergence $\OO(N^{-1/2})$ with respect to the computational complexity which empirically underpins Theorem~\ref{theorem:cost}.

\begin{figure}
	\centering
	\hspace*{-7mm}\includegraphics[width=\textwidth]{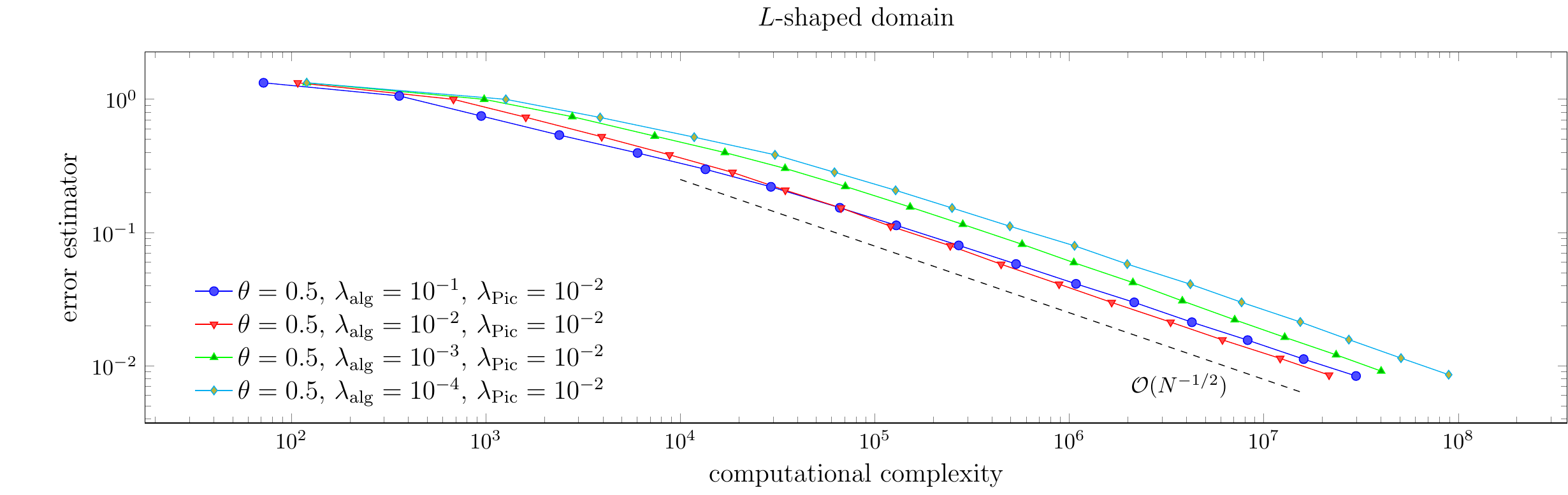}
	\vspace{0.2cm}\\%
	\hspace*{-7mm}\includegraphics[width=\textwidth]{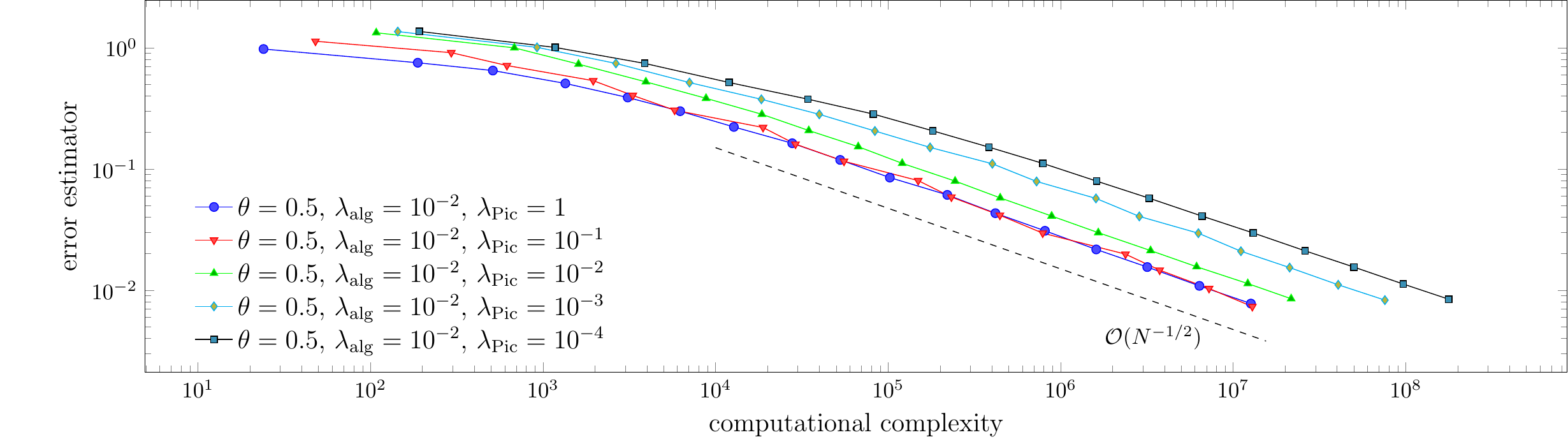}
	\vspace{0.2cm}\\%
	\hspace*{-7mm}\includegraphics[width=\textwidth]{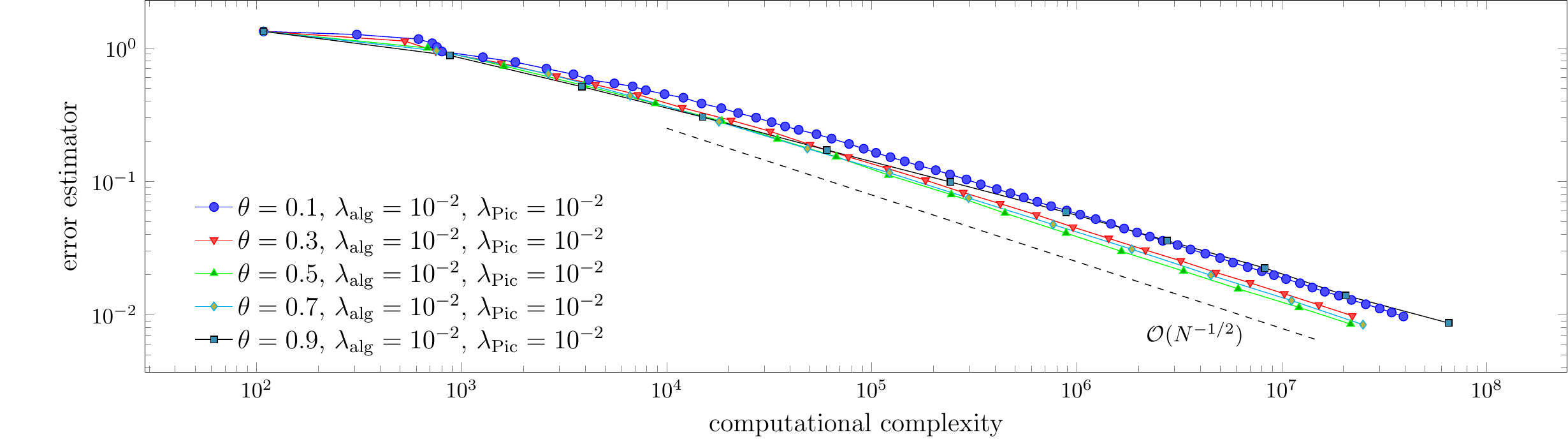}
	\caption{Example from Section~\ref{section:example_unknown}: Error estimator $\eta_{\ell'}(u_{\ell'}^{\k',\j'})$ with respect to the cumulative sum $\sum_{(\ell,k,j)\le(\ell',\k',\j')}\#\TT_{\ell}$ for $\theta=0.5$, $\lpic=10^{-2}$, and $\lpcg\in \{10^{-1},\ldots,10^{-4}\}$ (top), for $\theta=0.5$, $\lpcg=10^{-2}$, and $\lpic\in\{1,10^{-1},\ldots,10^{-4}\}$ (middle), as well as for $\lpcg=\lpic=10^{-2}$ and $\theta\in\{0.1, 0.3, \ldots, 0.9\}$ (bottom).}
\label{fig:compl_unknown}
\end{figure}	

In Figure~\ref{fig:nIt_unknown}, we consider the total number of PCG iterations cumulated over all Picard steps on the given mesh. We observe that independently of the choice $\theta$, $\lpcg$, and $\lpic$, the total number of PCG iterations stays uniformely bounded. Additionally, we see that for larger values of $\lpcg$ and $\lpic$, as well as for smaller values of $\theta$, the total number of PCG iterations is smaller.

\begin{figure}
	\centering
	\hspace*{-7mm}\includegraphics[width=\textwidth]{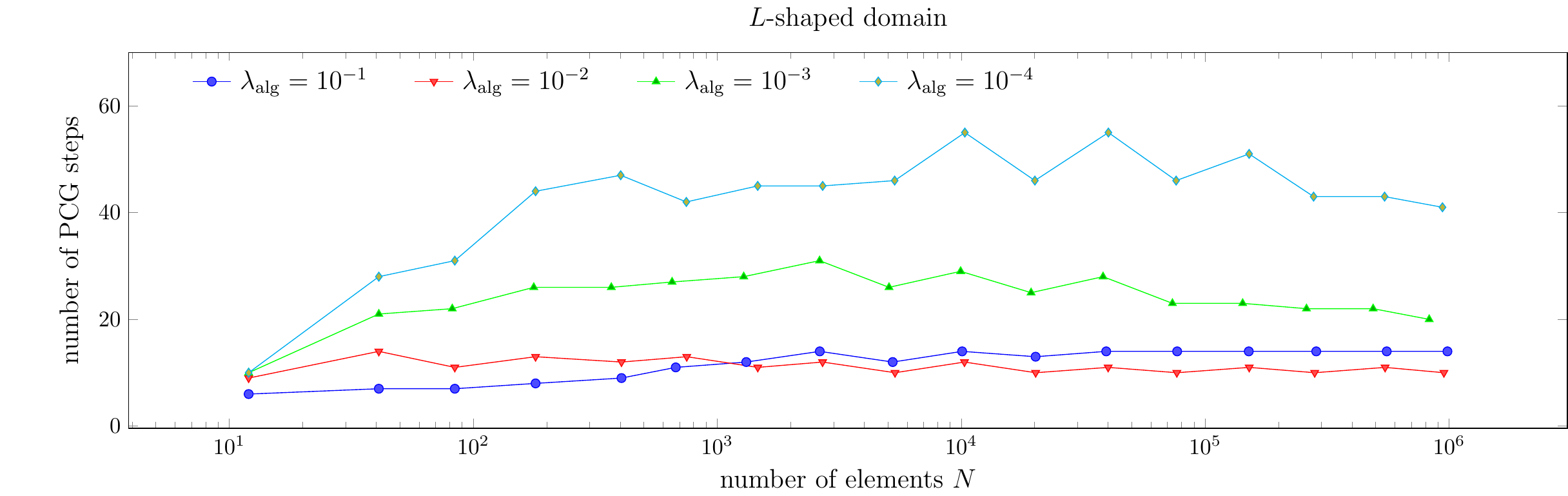}%
	\vspace{0.2cm}\\%
	\hspace*{-7mm}\includegraphics[width=\textwidth]{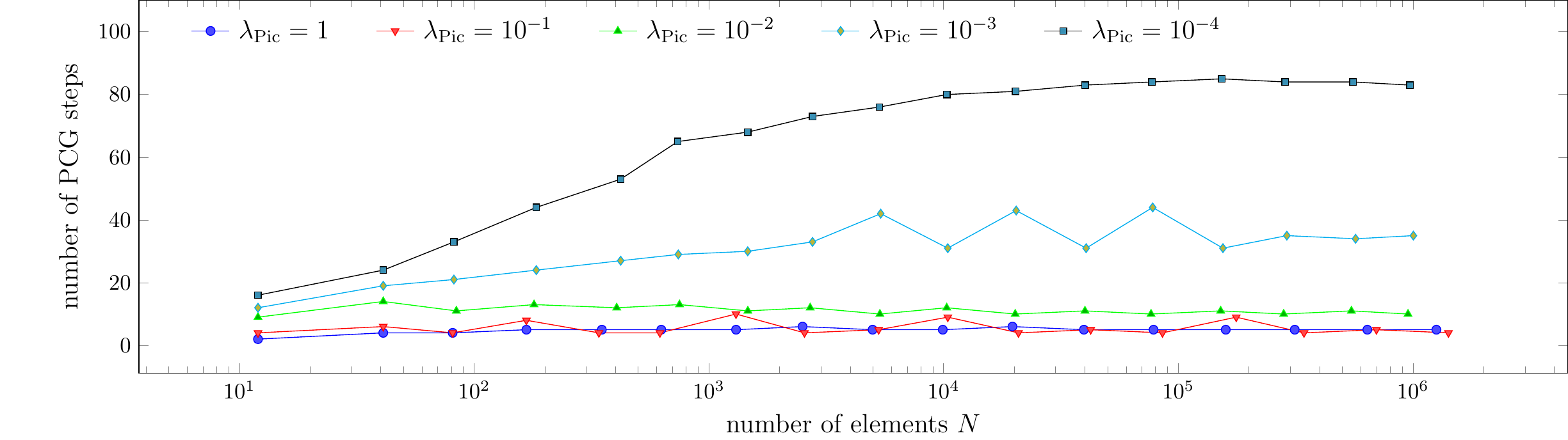}%
	\vspace{0.2cm}\\%
	\hspace*{-7mm}\includegraphics[width=\textwidth]{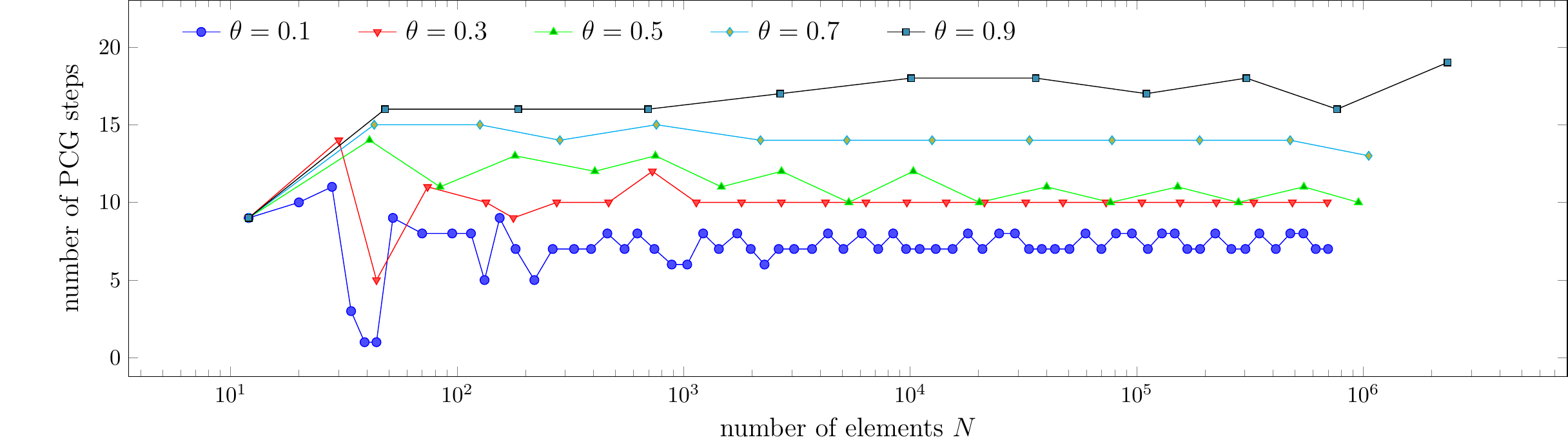}%
	\caption{Example from Section~\ref{section:example_unknown}: Number of algebraic solver iterations with respect to the number of elements $N:=\#\TT_\ell$ for $\theta=0.5$, $\lpic=10^{-2}$, and $\lpcg\in \{10^{-1},\ldots,10^{-4}\}$ (top), for $\theta=0.5$, $\lpcg=10^{-2}$, and $\lpic\in\{1,10^{-1},\ldots,10^{-4}\}$ (middle), as well as for $\lpcg=\lpic=10^{-2}$ and $\theta\in\{0.1, 0.3, \ldots, 0.9\}$ (bottom).}
\label{fig:nIt_unknown}
\end{figure}	


\bibliographystyle{alpha}
\bibliography{literature}

\end{document}